\documentclass{amsart}


\usepackage[T1]{fontenc}
\usepackage[utf8]{inputenc}
\usepackage{lmodern}
\usepackage[english]{babel}
\usepackage[autostyle]{csquotes}

\usepackage{amsmath}
\usepackage{amsfonts}
\usepackage{mathtools}
\usepackage{graphicx}
\usepackage{mathrsfs}
\usepackage{amsthm}
\usepackage{amssymb}
\usepackage{centernot}
\usepackage{enumerate}
\usepackage{mathbbol}
\usepackage{tikz}
\usepackage{tikz-cd}
\usepackage{verbatim}
\usepackage[mathcal]{euscript}
\usepackage[hidelinks]{hyperref}
\usepackage[margin=2.5cm]{geometry}


\newtheorem{theorem}{Theorem}[section]

\newtheorem{lemma}[theorem]{Lemma}

\newtheorem{proposition}[theorem]{Proposition}

\newtheorem{definition}[theorem]{Definition}

\theoremstyle{remark}

\theoremstyle{definition}
\newtheorem{remark}[theorem]{Remark}
\newtheorem{example}[theorem]{Example}


\newsavebox{\pullback}
\sbox\pullback{%
\begin{tikzpicture}%
\draw (0,0) -- (0ex,1ex);%
\draw (0ex,1ex) -- (1ex,1ex);%
\end{tikzpicture}}

\def\C{{\mathbb C}}
\def\Q{{\mathbb Q}}
\def\Z{{\mathbb Z}}

\def\R{{\mathbb R}}

\def\Oe{{\mathcal O_E}}

\def\ul#1{{\underline{#1}}}

\def\mc#1{{\mathcal{#1}}}
\def\mb#1{{\mathbb{#1}}}
\def\mf#1{{\mathfrak{#1}}}
\def\mr#1{{\mathrm{#1}}}
\def\im{{\mr{im}}}

\renewcommand{\hom}{\mr{Hom}}

\DeclareMathOperator{\edom}{End}

\DeclareMathOperator{\spec}{Spec}

\newcommand{\btone}{BT\textsubscript{1}}


\title[Generalised \texorpdfstring{$\theta$}{Theta} operators]{Generalised \texorpdfstring{$\theta$}{Theta} operators on unitary Shimura varieties}


\author{Lorenzo La Porta}
\address{Department of Mathematics, King's College London}
\email{lorenzo.laporta@protonmail.com}

\begin{document}


\begin{abstract}
The main result of this paper is the construction of a new class of weight shifting operators, similar to the theta operators of \cite{EFGMM}, \cite{deshagor19} and others, which are defined on the lower Ekedahl-Oort strata of the geometric special fibre of unitary Shimura varieties of signature $(n-1, 1)$ at a good prime $p$, split in the in the reflex field $E$, which we assume to be quadratic imaginary. These operators act on certain graded sheaves which are obtained from the arithmetic structure of the EO strata, in particular the $p$-rank on each stratum. We expect these operators to have applications to the study of Hecke-eigensystems of $(\mr{mod}\, p)$ modular forms and generalisations of the weight part of Serre's conjecture.
\end{abstract}

\maketitle

\tableofcontents

\section{Introduction}

\subsection{Motivation}
The theory of the classical theta operator was instrumental in the proof of the weight part of Serre's modularity conjecture of \cite{edixweight}. Edixhoven's proof relied, in particular, on the study of the \emph{$\theta$-cycles} of Tate and Jochnowitz, introduced in \cite{joch}. Since then, much work has been devoted to extending the construction of this operator to other Shimura varieties, with an eye towards generalisations of Serre's conjecture, or to gain insight in the {Langlands programme} $(\mr{mod}\,p)$ in a broader sense. 
\par Interesting results have been obtained in the case of Hilbert modular varieties, starting with work of Katz, in \cite{katzpadicCMfields}. Following Katz's construction of \emph{partial Hasse invariants}, Andreatta \& Goren, in \cite{andreattagoren}, constructed \emph{partial theta operators} and described their kernels and effects on the \emph{weight filtration}. These results have subsequently been improved upon and generalised further, see for instance \cite{diamondgeoweightshifting2021}.
\par In \cite{yamauchi2021weight}, Yamauchi constructed theta operators for Siegel modular forms, in degree $2$, and managed to study their theta cycles.
\par In \cite{deshagor19}, de Shalit \& Goren, building on their previous work \cite{deshagor16}, constructed $(\mr{mod}\, p)$ and $p$-adic theta operators on certain unitary Shimura varieties.
\par At the same time, Eischen, Mantovan and others, in a series of papers, \cite{EFGMM, eiscmant21, eiscmant212}, constructed theta operators on PEL Shimura varieties of types A and C. Their approach is independent from that of de Shalit \& Goren and it uses geometric techniques which, unlike the more classical theory, do not rely on $q$-expansions, or more general Fourier-Jacobi and Serre-Tate expansions. 
\par Most of these works leave the problem of studying theta cycles largely open and, where results are obtained, they seem to depend on the specific context. We believe that, in order to understand theta cycles in greater generality, one could benefit from considering new theta or ``theta-like'' operators, which produce more general weight shifts. Our goal is to present the construction of a new class of \emph{generalised theta operators} that seem to produce exactly the weight shifts that one would expect from a representation-theoretic viewpoint. Our theory of generalised theta operators ties in neatly with the theory of generalised Hasse invariants of \cite{boxer} and \cite{koskgold}.

\subsection{Notations and conventions}
We fix $E$ a quadratic imaginary extension of $\Q$ and $p$ an odd, rational prime, split in $E$. We write $\mb{F}$ for a given algebraic closure of $\mb{F}_p$. We choose, once and for all, a preferred embedding $\sigma \colon E \to \C$, so that $\hom(E, \C) = \{\sigma, \overline{\sigma}\}$, and an element $i = \sqrt{-1} \in \C$. Let $\delta_{E/\Q}$ denote the unique generator of the different ideal $\mathfrak{D}_{E/\Q}$ with positive imaginary part (with respect to our choices of $\sigma$ and $i$). In particular, we have the discriminant $D = D_{E/\Q} = -\mr{N}_{E/\Q}(\delta_{E/\Q}) = -\delta_{E/\Q} \overline{\delta}_{E/\Q} = -\lvert \delta_{E/\Q} \rvert^2$. If we also fix an isomorphism $\C \cong \overline{\Q}_p$, we obtain
\[
    \hom(E, \C) \cong \hom(E, \overline{\Q}_p) \cong \hom(\Oe, \mc{O}^\mr{ur}_{\overline{\Q}_p}) \cong \hom(\Oe/(p), \mb{F}),
\]
the last two isomorphisms depending on the fact that $p$ is unramified in $\Oe$. The last identification induces on $\{\sigma, \overline{\sigma}\}$ an action of the Frobenius automorphism $\mb{F}\to \mb{F}$ by post-composition. Since $p$ is split, this action is trivial. We will also write $\mc{O}_{E, \mr{ur}} \coloneqq \Oe[1/2D]$.  We fix $n\geq 3$ an integer, which will denote throughout the paper the relative dimension of the abelian schemes parametrised by the moduli spaces under consideration.
\par Unless otherwise specified, we assume that all the schemes we work with are locally noetherian.

\subsection{The elliptic case}
\label{introellcase}
We sketch here the construction of the theta operator in the classical case of modular curves, following \cite{katzresult}. This construction is the prototype on which most of the more general definitions are based.
\par Let $N \geq 5$ be an integer prime to $p$. Let $Y_1(N)$, or simply $Y$, be the modular curve of level $\Gamma_1(N)$ over $\mb{F}$. It is a smooth, affine, connected curve over $\mb{F}$, which comes equipped with a universal elliptic curve $\pi \colon \mc{E} \to Y_1(N)$. From this universal object we obtain the invertible sheaf $\ul{\omega} = \pi_\ast \big( \Omega^1_{{\mc{E}}/Y} \big)$, the so-called \emph{Hodge sheaf}. One can consider a projective compactification $Y_1(N) \subset X_1(N)$, which we may simply denote by $X$, and extend $\ul{\omega}$ to $X$.
\par In this setting, \emph{modular forms with coefficients in $\mb{F}$} are the elements of the graded $\mb{F}$-algebra $\mr{M}(N)=\oplus_k \mr{M}_k(N)$, with $\mr{M}_k(N) \coloneqq H^0(X, \ul{\omega}^k)$. The space of \emph{cusp forms} of weight $k$ is $\mr{S}_k(N) \coloneqq H^0(X, \ul{\omega}^k(-C)) \subseteq M_k(N)$, where $C = X \setminus Y$ is the \emph{cuspidal divisor}. On this algebra one can define as usual an action of the Hecke operators $T_l$, for $l \neq p$ any prime, and hence an action of the Hecke algebra they generate.
\par Since we are working in characteristic $p$, over $Y$, we can consider the Verschiebung morphism $V \colon \mc{E}^{(p)} \to \mc{E}$, which by pullback defines $V \colon \ul{\omega} \to \ul{\omega}^{(p)} \cong \ul{\omega}^p$ and hence a section $h \in H^0(Y, \ul{\omega}^{p-1})$. This section extends to a form $h \in M_{p-1}(N)$, which is called the \emph{Hasse invariant}. It has the special property of vanishing with simple zeros precisely at the \emph{supersingular points} $Y^\mr{ss} \subset Y \subset X$ and its $q$-expansion at the cusps is identically $1$. The complement $Y^\mr{ord} = Y \setminus Y^\mr{ss}$ is a dense open subset of $Y$, which is called the \emph{ordinary locus}.
\par 
A key observation in Katz's geometric construction of the classical theta operator is that over the ordinary locus there is a natural splitting for the \emph{Hodge filtration}
\[
\label{H}
\tag{H}
    0 \longrightarrow \ul{\omega} \longrightarrow H^1_\mr{dR}(\mc{E}/Y) \longrightarrow \ul{\omega}^\vee \longrightarrow 0.
\]
Write $H = H^1_\mr{dR}(\mc{E}/Y) = R^1\pi_\ast \Omega^\bullet_{\mc{E}/Y}$ for the (relative) de Rham cohomology of $\pi \colon \mc{E} \to Y$ and let $F \colon H^{(p)} \to H$ be the morphism obtained by pulling back via the relative Frobenius $F \colon \mc{E} \to \mc{E}^{(p)}$. Katz shows that, over $Y^\mr{ord}$, a natural complement for the subsheaf $\ul{\omega} \subset H$ is given by $\mc{U} = \im(F \colon H^{(p)} \to H)$, providing
\[
    H \cong \ul{\omega} \oplus \mc{U},
\]
which is called the \emph{unit-root splitting} (of (\ref{H})). While this splitting cannot be naturally extended  to $Y$, the projection parallel to it,
\[
    p_\mr{ur} \colon H \to \ul{\omega},
\]
does extend to $Y$ upon multiplication by the Hasse invariant. This underlies Katz's construction of the theta operator. We present here a reformulation of these facts which leads naturally to the generalisations we want to discuss. Over $Y^\mr{ord}$, which can be described as the locus where $V|_{\ul{\omega}} \colon \ul{\omega} \to \ul{\omega}^{(p)}$ is an isomorphism, we may consider the composition
\[
\label{P}
\tag{P}
    H \overset{V}{\longrightarrow} \ul{\omega}^{(p)} \overset{V|_{\ul{\omega}}^{-1}}{\longrightarrow} \ul{\omega}.
\]
It is easy to see that the map $H \to \ul{\omega}$ from (\ref{P}) is precisely $p_\mr{ur}$. In particular, the morphism $h \cdot p_\mr{ur} \colon H \to \ul{\omega}^p$ can be written as
\[
\label{hp}
\tag{hP}
    H \overset{V}{\longrightarrow} \ul{\omega}^{p} \overset{V|_{\ul{\omega}}^{-1}}{\longrightarrow} \ul{\omega} \overset{h = V|_{\ul{\omega}}}{\longrightarrow} \ul{\omega}^p,
\]
which is simply the surjection $V \colon H \to \ul{\omega}^p$ and readily extends from $Y^\mr{ord}$ to the entire modular curve. More generally, one can look at 
\[
    \mr{Sym}^k(p_\mr{ur}) \colon \mr{Sym}^k(H) \longrightarrow \ul{\omega}^k,
\]
where $\mr{Sym}^k$ is the $k$-th symmetric power. The locally free sheaf $\mr{Sym}^k(H)$ admits a descending filtration
\[
    F^i(\mr{Sym}^k(H)) = \im(\ul{\omega}^i \otimes \mr{Sym}^{k-i}(H) \longrightarrow \mr{Sym}^k(H))
\]
and one can show, in the same vein, that the morphism
\[
    h \cdot \mr{Sym}^k(p_\mr{ur})|_{F^{k-1}(\mr{Sym}^k H)} \colon F^{k-1}(\mr{Sym}^k H) \to \ul{\omega}^{k+p-1}
\]
extends from $Y^\mr{ord}$ to $Y$. This is important, because on $\mr{Sym}^k(H)$ and over $Y$, one can define the {Gauss-Manin connection}
\[
    \nabla \colon \mr{Sym}^k(H) \longrightarrow \mr{Sym}^k(H) \otimes \Omega^1_{Y/\mb{F}}
\]
which satisfies a general transversality property implying, in particular, that
\[
    \nabla(\ul{\omega}^k) \subseteq F^{k-1}(\mr{Sym}^k(H)) \otimes \Omega^1_{Y/\mb{F}} \subseteq \mr{Sym}^k(H) \otimes \Omega^1_{Y/\mb{F}}.
\]
All of this is used by Katz to define the theta operator as the composition
\[
    \begin{tikzcd}
        &  \theta \colon \ul{\omega}^k \arrow[r, "{{\nabla}}"] & F^{k-1}(\mr{Sym}^k(H)) \otimes \Omega^1_{Y/\mb{F}} \arrow[rrr, "{{(h \cdot \mr{Sym}^k(p_\mr{ur})) \otimes \ul{\mr{ks}}^{-1}}}"] &&&\ul{\omega}^{k+p+1},
    \end{tikzcd}
\]
where $\ul{\mr{ks}} \colon \ul{\omega}^2 \to \Omega^1_{Y/\mb{F}} $ is the {Kodaira-Spencer} isomorphism. A more detailed inspection of $\theta$, for instance via $q$-expansions and the $q$-expansion principle, shows that it extends to an operator over $X$. Taking global sections gives rise to
\[
    \theta \colon \mr{M}_k(N) \longrightarrow \mr{S}_{k+p+1}(N) \subseteq \mr{M}_{k+p+1}(N),
\]
the \emph{theta operator on modular forms}. This is a derivation of the algebra $\mr{M}(N)$ of modular forms of degree $p+1$. By this we mean that for two modular forms $f, g$, we have $\theta(fg) = f\theta(g) + \theta(f)g$, which is a cusp form of degree $p+1+\deg(fg)$. The operator $\theta$ is $h$-linear, in the sense that $\theta(h) = 0$. Moreover, one can show that
\[
    T_l \, \theta = l\, \theta \, T_l.
\]
\par In particular, if $f$ is an eigenform, so is $\theta(f)$.

\subsection{The Picard case}

We briefly sketch the construction of ordinary and generalised theta operators in the special case of \emph{Picard modular surfaces} to illustrate the main ideas involved. 
\par First, let us set up some notation. Write $S$ for the geometric special fibre of the Picard modular surface over $\mb{F}$ with some neat, $p$-hyperspecial level. Roughly speaking, this is a moduli space of polarised abelian schemes of relative dimension $3$ endowed with an action of $\mathcal{O}_E$, the ring of integers of $E$. For more details on the moduli problem see the next section. We have a universal object $\pi \colon A \to S$ and, as before, we can consider the Hodge sheaf $\ul{\omega} = \pi_\ast(\Omega^1_{A/S})$. In this case, $\ul{\omega}$ is locally free of rank $3$ and, under the induced action of $\mc{O}_E$, it decomposes into the direct sum of two locally free sheaves $\mathcal{P} = \ul{\omega}_\sigma$ and $\mathcal{L} = \ul{\omega}_{\overline{\sigma}}$, the subscripts indicating that $\mc{O}_E$ acts on $\ul{\omega}_\tau$ via $\tau \in \{\sigma, \overline{\sigma}\}$. We assume that $\mc{P}$ and $\mc{L}$ have ranks $2$ and $1$, respectively. We will also consider the sheaf $\delta = \delta_\sigma \cong \det \mc{P} \otimes \mc{L}^{-1}$, which is torsion on $S$, by Lemma \ref{lemtorsiondelta}. We write $S^\mu \subseteq S$ for the \emph{ordinary locus}, the maximal stratum in the Ekedahl-Oort stratification of $S$, which is a dense open in $S$. The complement $S^\mr{no} = S \setminus S^\mu$ is called the \emph{non-ordinary locus} and is given by the disjoint union of  the \emph{almost ordinary locus} $S^\mr{ao}$, the EO stratum of dimension 1, open and dense in $S^\mr{no}$, and the \emph{core locus} $S^\mr{core}$, the EO stratum of dimension $0$, closed in $S$ and $S^\mr{no}$. We define an automorphic weight to be a couple $(\ul{k}, w)$, where $\ul{k}=(k_1, k_2) \in \Z^2$, with $k_1 \geq k_2$, $w \in \Z$, and the corresponding automorphic sheaf to be
\[
    \ul{\omega}^{\ul{k}, w} \coloneqq \mr{Sym}^{k_1-k_2}(\mc{P}) \otimes \det(\mc{P})^{k_2} \otimes \delta^w.
\]
\par Analogously to (\ref{H}), we have an Hodge filtration on $H = H^1_\mr{dR}(A/S)$, of which we can take components according to the action of $E$:
\begin{align*}
\label{HP}
\tag{HP}
    0 \longrightarrow \mc{P} \longrightarrow &H_\sigma \longrightarrow \mc{L}^\vee \longrightarrow 0,\\
\label{HL}
\tag{HL}
    0 \longrightarrow \mc{L} \longrightarrow &H_{\overline{\sigma}} \longrightarrow \mc{P}^\vee \longrightarrow 0.
\end{align*}
Katz's explicit construction of the unit-root splitting in the elliptic case carries over and provides a natural splitting of (\ref{HP}) and (\ref{HL}) over $S^\mu$. As we alluded to in \ref{introellcase}, we may reinterpret this unit-root splitting in terms of the Verschiebung morphism. Let $V \colon H \to H^{(p)}$ be the pullback of the Verschiebung morphism of the universal abelian scheme $A \to S$. We consider its CM components $V_\sigma \colon H_\sigma \to H_\sigma^{(p)}, V_{\overline{\sigma}} \colon H_{\overline{\sigma}} \to H_{\overline{\sigma}}^{(p)}$, whose images are $\mc{P}^{(p)}, \mc{L}^p$, respectively. Over $S^\mu$, the restrictions $V_\sigma|_\mc{P} \colon \mc{P} \to \mc{P}^{(p)}$, $V_{\overline{\sigma}}|_\mc{L} \colon \mc{L} \to \mc{L}^p$ are isomorphisms and we can use them to define
\[
    \begin{tikzcd}
     & p_{\mr{ur}, \sigma}\colon {H}_\sigma \arrow[r, two heads, "{V_\sigma}"] &\mc{P}^{(p)} \arrow[r, "{V_\sigma|_{\mc{P}}^{-1}}"] &\mc{P},\\
     & p_{\mr{ur}, {\overline{\sigma}}}\colon {H}_{\overline \sigma} \arrow[r, two heads, "{V_{\overline{\sigma}}}"] &\mc{L}^{p} \arrow[r, "{V_{\overline{\sigma}}|_{\mc{L}}^{-1}}"] &\mc{L}.
    \end{tikzcd}
\]
These morphisms give the unit-root splitting in this case. Like in the classical case, we cannot extend this splitting naturally to the non-ordinary locus $S^\mr{no} = S \setminus S^\mu$, essentially because $V \colon \ul{\omega} \to \ul{\omega}^{(p)}$ is not invertible on $S^\mr{no}$, but we can use the Hasse invariant to deal with this obstruction. In this case, the Hasse invariant is the section $h \in H^0(S, \det \ul{\omega}^{p-1})$, which is obtained as the determinant of the morphism $V\colon \ul{\omega} \to \ul{\omega}^{(p)}$. The section $h$ is nowhere-vanishing on $S^\mu$, identically $0$ on $S^\mr{no}$ and it splits as the product $h = h_\sigma \cdot h_{\overline{\sigma}}$, where 
\begin{align*}
    h_\sigma &\in H^0(S, \det \mc{P}^{p-1}), \\
    h_{\overline{\sigma}} &\in H^0(S, \mc{L}^{p-1}),
\end{align*}
are obtained from $\det V_\sigma$ and $V_{\overline{\sigma}}$, respectively. Both $h_\sigma$ and $h_{\overline{\sigma}}$ vanish with simple zeros at $S^\mr{no}$. Just like in the elliptic case, the morphism $h_{\overline{\sigma}} \cdot p_{\mr{ur}, \overline{\sigma}} \colon H_{\overline{\sigma}} \to \mc{L}^p$ can be extended from $S^\mu$ to $S$, since it simply coincides with $V\colon H_{\overline{\sigma}} \to \mc{L}^p$. We can also extend
\[
    h_\sigma \cdot p_{\mr{ur}, \sigma} \colon H_\sigma \to \mc{P} \otimes \det \mc{P}^{p-1}
\]
from $S^\mu$ to $S$, even if the rank of $\mc{P}$ is greater than $1$. In fact, the product 
\[
    h_\sigma \cdot V^{-1} \colon \mc{P}^{(p)} \to \mc{P} \otimes \det \mc{P}^{p-1}
\]
is the \emph{adjugate}, $V^\mr{adj}$, of the morphism $V \colon \mc{P} \to \mc{P}^{(p)}$ which, like $V$, is defined on the whole of $S$. As a result, the extension of $h_\sigma \cdot p_{\mr{ur}, \sigma}$ is the composition
\[
    V|_{\mc{P}}^\mr{adj} \circ V \colon H_\sigma \to \mc{P} \otimes \det \mc{P}^{p-1}.
\]
The consideration of $V^\mr{adj}$ is a core idea of \cite{EFGMM}. In Lemma \ref{lemadjext} we prove a general result for extending morphisms from $S^\mu$ to $S$, which allows us to consider the correct analogue of the above construction to define the theta operator on $\ul{\omega}^{\ul{k}, w}$, for a general weight $(\ul{k}, w)$. Lemma \ref{lemadjext} is also used crucially in our definition of generalised theta operators. In particular, in the Picard case, we can define, for $(\ul{k}, w)$ such that $k_2 \geq 0$, a morphism $h_\sigma \cdot (p_{\mr{ur}})^{\ul{k}, w} \colon H^{\ul{k}, w} \to \ul{\omega}^{\ul{k}, w} \otimes \det \mc{P}^{p-1}$, where
\begin{align*}
     h_\sigma \cdot (p_{\mr{ur}})^{\ul{k}, w} &= h_\sigma \cdot \mr{Sym}^{k_1-k_2}(p_{\mr{ur}, \sigma}) \otimes \mr{Sym}^{k_2}(\wedge^2(p_{\mr{ur}, \sigma})) \otimes \mr{id}_\delta, \\
     H^{\ul{k}, w} &= \mr{Sym}^{k_1-k_2}(H_\sigma) \otimes \mr{Sym}^{k_2}(\wedge^2(H_\sigma)) \otimes \delta.
\end{align*}
While $h_\sigma \cdot (p_{\mr{ur}})^{\ul{k}, w}$ itself does not extend from $S^\mu$ to $S$, a relevant restriction of this morphism does, thanks to Lemma \ref{lemadjext}. 
\par As in the elliptic case, we have natural filtrations $\mc{P} \subset H_\sigma$ and $\mc{L} \subset H_{\overline{\sigma}}$, with respect to which the Gauss-Manin connection $\nabla \colon H \to H \otimes \Omega^1_{S/\mb{F}}$ satisfies a natural transversality property. Unlike in the classical case, the Kodaira-Spencer morphism $\ul{\mr{KS}} \colon \ul{\omega} \otimes \ul{\omega} \to \Omega^1$ is not an isomorphism, but its restriction
\[
    \ul{\mr{ks}} = \ul{\mr{KS}}_\sigma \colon \mc{P} \otimes \mc{L} \to \Omega^1_{S/\mb{F}}
\]
is. With our conventions for the automorphic sheaves this becomes the isomorphism $\ul{\mr{ks}} \colon \mc{P} \otimes \det(\mc{P}) \otimes \delta^{-1} \to \Omega^1$. We can use all of this to define the operator
\[
    \theta_1 \colon \ul{\omega}^{\ul{k}, w} \longrightarrow \ul{\omega}^{\ul{k}+(p+1, p), w-1}
\]
as the following composition
\begin{align*}
    \ul{\omega}^{\ul{k}, w} \overset{\nabla}{\longrightarrow}&H^{\ul{k}, w} \otimes \Omega^1_{S/\mb{F}}\\
    \overset{\ul{\mr{ks}}^{-1}}{\longrightarrow} &H^{\ul{k}, w} \otimes \mc{P} \otimes \det(\mc{P}) \otimes \delta^{-1}\\
    {\longrightarrow} &\ul{\omega} ^{\ul{k}, w} \otimes \mc{P} \otimes \det(\mc{P})^p \otimes \delta^{-1}\\
    \longrightarrow &\ul{\omega}^{\ul{k}+(p+1, p), w-1},
\end{align*}
where the second-to-last map is $h_\sigma \cdot (p_{\mr{ur}})^{\ul{k}, w} \otimes \mr{id}$.
This produces a weight shift of the form $((p+1, p), -1)$, that is, mostly in the direction of $\det \mc{P} \cong \mc{L}$. To obtain a different weight shift we have to generalise the projection $h_\sigma \cdot (p_{\mr{ur}})^{\ul{k}, w}$. This does not seem to be possible on $S$. To remedy this, in this work we use the structure theory of the Ekedahl-Oort stratification to obtain such generalisations and construct theta operators on lower strata. Let us discuss some of the ingredients involved in this new construction. 
\par On $S^\mr{no}$, we have a short exact sequence
\[
\label{F}
\tag{F}
    0 \longrightarrow \mathcal{P}_0 \longrightarrow \mathcal{P} \longrightarrow \mathcal{P}_\mu \longrightarrow 0
\]
where $\mc{P}_0 \coloneqq \ker(V_\sigma \colon \mc{P} \to \mc{P}^{(p)})$ is an invertible sheaf and so is the quotient $\mc{P}_\mu$. Setting $H_\mu = H_\sigma/\mc{P}_0$, we have a ses of sheaves on $S^\mr{no}$
\[
\tag{HP'}
\label{hmu}
    0 \longrightarrow \mc{P}_\mu \longrightarrow H_\mu \longrightarrow \mc{L}^\vee \longrightarrow 0,
\]
which is analogous to (\ref{HP}). By general properties of the GM connection, we have that $\nabla(\mc{P}_0) \subseteq \mc{P}_0 \otimes \Omega^1_{S^\mr{no}/\mb{F}}$, which implies that $\nabla$ induces a connection $\nabla \colon H_\mu \to H_\mu \otimes \Omega^1$. The Verschiebung morphism induces a map $V \colon H_\mu \to H_\mu^{(p)}$, the image of which is $\mc{P}_\mu^p$. The almost-ordinary locus $S^\mr{ao}$ can be characterised as the locus in $S^\mr{no}$ where the restriction $V|_{\mc{P}_\mu} \colon \mc{P}_\mu \to \mc{P}_\mu^p$ is an isomorphism. Hence, we can use an idea similar to our reinterpretation of the unit-root splitting to construct a splitting of (\ref{hmu}), by defining
\[
    \begin{tikzcd}
    p_{\mr{ur}, 2} \colon H_\mu \arrow[r, two heads, "{V}"] &\mc{P}_\mu^p \arrow[r, "{V|_{\mc{P}_\mu}^{-1}}"] &\mc{P}_\mu.
    \end{tikzcd}
\]
We can also consider a \emph{partial generalised Hasse invariant} $A_{2} \in H^0(S^\mr{no}, \mc{P}_\mu^{p-1})$, corresponding to $V \colon \mc{P}_\mu \to \mc{P}_\mu^p$. The vanishing locus of $A_{2}$ is precisely $S^\mr{no} \setminus S^\mr{ao}$, the core locus of $S$. We can show that while the morphism $p_{\mr{ur}, 2}$ does not extend from $S^\mr{ao}$ to $S^\mr{no}$, the map $A_{2} \cdot p_{\mr{ur}, 2} \colon H_\mu \to \mc{P}_\mu^p$ does. More generally, the morphisms
\[
    A_{2} \cdot \mr{Sym}^k(p_{\mr{ur}, 2}) \colon \mr{Sym}^k(H_\mu) \longrightarrow \mc{P}_\mu^{k+p-1},
\]
for $k \geq 0$, extend to $S^\mr{no}$ when restricted to a relevant subsheaf of $\mr{Sym}^k(H_\mu)$.
With this partial unit-root splitting, we can define a new differential operator on the graded sheaves
\[
    \mr{gr}^\bullet(\ul{\omega}^{\ul{k}, w}) = \mr{Sym}^{k_1-k_2}(\mc{P}_0 \oplus \mc{P}_\mu) \otimes (\mc{P}_0 \otimes \mc{P}_\mu)^{k_2} \otimes \delta^w
\]
which correspond to a filtration on $\ul{\omega}^{\ul{k}, w}$ induced by (\ref{F}).
This \emph{generalised theta operator} will have  the form
\[
    \theta_2 \colon \mr{gr}^\bullet(\ul{\omega}^{\ul{k}, w}) \longrightarrow \mr{gr}^\bullet(\ul{\omega}^{\ul{k}+(p+1,1), w-1}),
\]
thus producing the weight shift $(p+1, 1)$, mostly in the direction of $\mc{P}$, we were looking for.
\par This construction works more generally on unitary Shimura varieties of signature $(n-1, 1)$, $n \geq 3$, where it gives rise to a family of generalised theta operators defined on various EO strata of the Shimura variety. In future work, we plan to extend these results to even more general Shimura varieties.
The main result is Theorem \ref{bigthm}, whose statement we recall here. For notations, see the relevant sections.
\begin{theorem}
\label{bigthmintro}
Let $1 \leq r < n$ be an integer and $(\ul{k}, w)$ an automorphic weight with $k_{n-1}\geq 0$. There exists a differential operator
\[
    \theta_r \colon \mr{gr}^{\bullet, r}(\ul{\omega}^{\ul{k}, w}) \longrightarrow \mr{gr}^{\bullet, r}(\ul{\omega}^{\ul{k}+\ul{\Delta}_r, w}),
\]
defined on the (closure of the) Ekedahl-Oort stratum $\overline{S}_{K, w_r}$, with
\[
    \ul{\Delta}_r = (p+1, p, \cdots, p, 1, \cdots, 1)
\]
where exactly the last $r-1$ entries are $1$. The operator $\theta_r$ satisfies the following properties:
\begin{enumerate}
    \item The operator $\theta_r$ is $A_{r}$-linear, that is, $\theta_r(A_{r}) = 0$, where $A_{r}$ is the partial Hasse invariant defined in \ref{parHinv}.
    \item The operator $\theta_r$ is Hecke-equivariant.
    \item Let $f \in H^0(\overline{S}_{K, w_r}, \mathrm{gr}^{\bullet, r}(\ul \omega^{\ul k, w}))$ and write it as $f = \sum_{\ul a} f_{\ul a}$, for the decomposition described in \ref{genthtssc}. Then $\theta_r(f)$ is divisible by the Hasse invariant $A_r$ if and only if for each component $f_{\ul a}$ either $A_r \mid f_{\ul a}$ or $p \mid \lvert \ul a \rvert$. 
\end{enumerate}
\end{theorem}

\subsection{Acknowledgements}
I would especially like to thank my Ph.D. supervisor, Prof.\ P.\ L.\ Kassaei, for suggesting that I work on these and related problems and for always being supportive. I am likewise grateful to my second supervisor, Prof.\ F.\ I.\ Diamond, for many helpful conversations. For their insightful comments, I also wish to thank George Boxer, Andrew Graham, Pol van Hoften and Martin Ortiz.
\par This work was supported by the Engineering and Physical Sciences Research Council [EP/L015234/1], through the EPSRC Centre for Doctoral Training in Geometry and Number Theory (The London School of Geometry and Number Theory), University College London and King's College London.

\section{Unitary Shimura varieties of signature \texorpdfstring{$(n-1,1)$}{(n-1, 1)}}

\subsection{The PEL datum}
We will work with the {integral PEL datum}, in the sense of \cite[Ch.~5]{kott92}, defined as follows:
\begin{enumerate}
    \item The simple $\Q$-algebra $B$ is just $E$, with $\Oe$ as its maximal $\Z$-order.
    \item The positive involution ${}^\ast$ on $E$ is the only non trivial element of $\mr{Gal}(E/\Q)$, which is induced by the complex conjugation $\overline{\cdot}$ via $\sigma$. The fixed field $F_0$ is $\Q$.
    \item We let $V$ be the $E$-vector space $E^n$ and take $\Lambda = \mathcal{O}_E^n$ to be the natural $\Oe$-lattice inside it, generated by the canonical basis $\{e_1, e_2, \dots, e_n\} \subset \Lambda$.
    \item Consider the perfect Hermitian pairing $\left(\cdot, \cdot\right) \colon V \times V \to E$ of signature $(n-1, 1)$ given by the diagonal matrix $I_{n-1,1}= \mr{diag}(1, \dots, 1, -1)$, with respect to the canonical basis on $V$. Notice that we can restrict $\left(\cdot, \cdot\right)$ to a perfect pairing $\left(\cdot, \cdot\right) \colon \Lambda \times \Lambda \to \Oe$. Define 
    \[ 
    \left< \cdot, \cdot\right> \coloneqq \mr{T}_{E/\Q}(\delta_{E/\Q}^{-1}(\cdot, \cdot)) = {2i {\delta}_{E/\Q}^{-1} \mr{Im}(\cdot, \cdot)}\colon V \times V \to \Q.
    \]
    This is a perfect alternating $\Q$-linear pairing such that $\left<\alpha u, v\right> =\left<u, \overline{\alpha}v\right>,\, u, v \in V, \alpha \in \Oe$, whose restriction to $\Lambda$ induces a perfect $\Z$-linear pairing $\Lambda \times \Lambda \to \Z$. The Hermitian pairing $\left(\cdot, \cdot\right)$ can be recovered from $\left< \cdot, \cdot\right>$ via the identity $2\left(u, v\right) = \left<u, \delta_{E/\Q} v \right> + \delta_{E/\Q} \left<u, v\right>$. By adjunction, $\left( \cdot, \cdot \right)$ defines an involution ${}^\ast$ of $\edom_E(V)$, which restricts to an involution of $\edom_\Oe(\Lambda)$, compatible with the conjugation on $E \subset \edom_E(V)$. With respect to the canonical basis, this involution is given by $M \mapsto I_{n-1,1}{}^t\overline{M}I_{n-1,1}$.
    \item We have the isomorphisms 
    \[
        \edom_E(V) \otimes_\Q \R \cong \edom_{E\otimes_\Q \R}(V\otimes_\Q \R) \cong \mr{M}_n(\C),
    \]
    the first being natural, the second depending on the choice of the canonical basis and of $\sigma$.
    We take $h \colon \C \to \edom_E(V) \otimes_\Q \R$, via these isomorphisms, to be the map of $\R$-algebras defined by $z \mapsto \mr{diag}(z, \dots, z, \overline{z})$. Clearly, $h(\overline{z}) = \overline{h(z)}$. Notice that $\left< \cdot, h(i) \cdot\right>$ is symmetric and positive definite on $V_\R$. In particular, through the isomorphism $\Z(1) \cong \Z$ given by our choice of $\sigma$ (and $i \in \C$), we see that $(\Lambda, h, \left<\cdot, \cdot \right>)$ is a \emph{integral polarised Hodge structure}. 
    As usual, $h$ can be seen as the action on $\R$-points of a morphism between schemes with values in real algebras, which we will also denote by $h$, defined on an $\R$-algebra $R$ as 
    \begin{align*}
        h(R) \colon \mr{Res}_{\C/\R} \mb{G}_a(R) = R \otimes_\R \C & \longrightarrow \edom_E(V) \otimes_\Q R \cong \mr{M}_n(R \otimes_\R \C),\\
        z & \longmapsto \mr{diag}(z, \dots, z, \overline{z}),
    \end{align*}
    where $\overline{z}$ is defined by the action on the first factor of the tensor product.
\end{enumerate}
These data satisfy some important properties which we now recall.
\par First of all, we can decompose $V_\C = V \otimes_\Q \C$ according to the action of $z \in \C$ via $h$, that is, $V_\C = V_1 \oplus V_2$, where 
\[
    V_1 = \{ v \in V_\C \mid h(z)v=z v\},\quad V_2 = \{ v \in V_\C \mid h(z)v=\overline{z} v\}.
\]
We can describe this {Hodge structure} of type $\{(-1,0), (0, -1)\}$ explicitly. Consider the two primitive idempotents
\[
    e_\sigma = \frac{1 \otimes 1 + \delta_{E/\Q} \otimes \delta_{E/\Q}^{-1}}{2}, \quad e_{\overline{\sigma}} = \frac{1 \otimes 1 - \delta_{E/\Q} \otimes \delta_{E/\Q}^{-1}}{2}
\]
in the ring $E \otimes_\Q E \subset E \otimes_\Q \C$. Then
\begin{align}
\label{v1}
    V_1 &= \mr{Span}_\C\left < e_\sigma\cdot e_1, \dots, e_\sigma\cdot e_{n-1}, e_{\overline{\sigma}}\cdot e_n \right >, \\
\label{v2}
    V_2 &= \mr{Span}_\C\left < e_{\overline{\sigma}}\cdot e_1, \dots, e_{\overline{\sigma}}\cdot e_{n-1}, e_\sigma \cdot e_n \right >,
\end{align}
where $e_1, e_2, \dots, e_n$ is the canonical basis of $V$ and $V_\C$, by abuse of notation.
One can see that both $V_1$ and $V_2$ are {defined over} $E$, in the sense of \cite[Def.~1.1.2.7]{kwl}, which means that the subfield of $\C$ fixed by all the $\tau \in \mr{Aut}(\C)$ such that $V_1^\tau \coloneqq V_1 \otimes_{\C, \tau} \C \cong V_1$ as $E \otimes_\Q \C$-modules is $E$.
\par Proving that such field of definition is $E$ comes down to the fact that $\tau \in \mr{Aut}(\C)$ has two possible restrictions to $E$: when $\tau|_E = \mr{id}_E$, we have $V_1^\tau \cong V_1$, when $\tau|_E \neq \mr{id}_E$, then $V_1^\tau \cong V_2$, since $\tau(\delta_{E/\Q}^{-1}) = -\delta_{E/\Q}^{-1}$, which is not isomorphic to $V_1$ as $E \otimes_\Q \C$-modules (because they have different signatures, see below). Furthermore, we can find models of $V_1$ and $V_2$ as $E \otimes_\Q E \subset E \otimes_\Q \C$-modules: if we call these, by abuse of notation, again $V_1, V_2$, we see that there is a decomposition $V \otimes_\Q E \cong V_1 \oplus V_2$, given as above, by the same idempotents. In fact, this decomposition is even integral: $e_\sigma, e_{\overline{\sigma}}$ are idempotents in the ring $\Oe \otimes_\Z \mc{O}_{E, \mr{ur}}$ and we can decompose $\Lambda \otimes_{\Z} \mc{O}_{E, \mr{ur}} \cong \Lambda_1 \oplus \Lambda_2$, with $\Lambda_i \otimes_{\mc{O}_{E, \mr{ur}}} E \cong V_i, \,i=1,2$. Summing up, by the definition given in \cite[Ch.~5]{kott92}, we have proved the following fact.
\begin{lemma}
The \emph{reflex field} of the datum $(E, {}^\ast, V, \left<\cdot,\cdot\right>, h)$ is $E$.
\end{lemma}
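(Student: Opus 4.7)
The plan is to unwind the definition of reflex field given in \cite[Ch.~5]{kott92} and apply it to the explicit decomposition $V \otimes_\Q \C \cong V_1 \oplus V_2$ described just above, verifying directly the condition that characterises $E$.

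First, I would recall that the reflex field is the subfield of $\C$ fixed by the subgroup
\[
    H \coloneqq \{ \tau \in \mr{Aut}(\C) \mid V_1^\tau \cong V_1 \text{ as } E \otimes_\Q \C\text{-modules}\},
\]
so the goal is to identify $H$ with $\mr{Aut}(\C/E) = \{\tau \in \mr{Aut}(\C) \mid \tau|_E = \mr{id}_E\}$. Since $E/\Q$ is quadratic, any $\tau \in \mr{Aut}(\C)$ restricts to $E$ (via our fixed $\sigma$) either as $\mr{id}_E$ or as complex conjugation $\ol{\cdot}$.

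Next, I would treat the two cases separately. If $\tau|_E = \mr{id}_E$, then $E \otimes_\Q \C$ acts on $V_1^\tau = V_1 \otimes_{\C, \tau} \C$ via the first factor exactly as on $V_1$, so the standard isomorphism $v \otimes \lambda \mapsto \lambda v$ identifies $V_1^\tau$ with $V_1$ as $E \otimes_\Q \C$-modules; hence $\mr{Aut}(\C/E) \subseteq H$. If instead $\tau|_E = \ol{\cdot}$, then the action of $\delta_{E/\Q}$ on $V_1^\tau$ corresponds to the action of $\tau^{-1}(\delta_{E/\Q}) = -\delta_{E/\Q}$ on $V_1$; this swaps the two primitive idempotents $e_\sigma \leftrightarrow e_{\ol{\sigma}}$ of $E \otimes_\Q \C$, yielding $V_1^\tau \cong V_2$ as $E \otimes_\Q \C$-modules.

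It remains to show $V_1 \not\cong V_2$ as $E \otimes_\Q \C$-modules, which will force $H = \mr{Aut}(\C/E)$ and complete the proof. This is where the signature comes in and is essentially the only nontrivial point, though it is elementary. Under the splitting $E \otimes_\Q \C \cong \C \times \C$ induced by $(\sigma, \ol{\sigma})$, one has $V_1 = V_1 \cdot e_\sigma \oplus V_1 \cdot e_{\ol{\sigma}}$, and by inspection of (\ref{v1}) and (\ref{v2}) the $\C$-dimensions are
\[
    (\dim_\C V_1 \cdot e_\sigma,\, \dim_\C V_1 \cdot e_{\ol{\sigma}}) = (n-1, 1), \quad (\dim_\C V_2 \cdot e_\sigma,\, \dim_\C V_2 \cdot e_{\ol{\sigma}}) = (1, n-1).
\]
Since $n \geq 3$, these ordered pairs differ, so $V_1$ and $V_2$ are non-isomorphic $E \otimes_\Q \C$-modules. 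Combining the three steps, $H$ is precisely $\mr{Aut}(\C/E)$, whose fixed field is $E$, and the lemma follows.
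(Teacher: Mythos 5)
Your proof follows essentially the same route as the paper: split on the two possible restrictions $\tau|_E$, show the stabiliser of the $E \otimes_\Q \C$-isomorphism class of $V_1$ is $\mr{Aut}(\C/E)$, and distinguish $V_1$ from $V_2$ via the signature (the dimensions of the $e_\sigma$- and $e_{\overline{\sigma}}$-isotypic components). The paper's own argument appears in the paragraph immediately preceding the lemma statement and is the same case analysis, also using the signature to separate $V_1$ from $V_2$.

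One technical slip: in the case $\tau|_E = \mr{id}_E$, the map $v \otimes \lambda \mapsto \lambda v$ from $V_1 \otimes_{\C, \tau} \C$ to $V_1$ is not well-defined for general $\tau$. To respect the relation $(\mu v) \otimes \lambda = v \otimes (\tau(\mu)\lambda)$ it would require $\mu = \tau(\mu)$ for all $\mu \in \C$, i.e.\ $\tau = \mr{id}$. The correct justification for $V_1^\tau \cong V_1$ is the same signature argument you already deploy in the other case: since $\tau$ fixes $\sigma(E)$ it fixes the idempotents $e_\sigma, e_{\overline{\sigma}} \in E \otimes_\Q \C$, so the $e_\sigma$- and $e_{\overline{\sigma}}$-components of $V_1^\tau$ have the same $\C$-dimensions as those of $V_1$, namely $(n-1, 1)$; and finite $E \otimes_\Q \C \cong \C \times \C$-modules are classified precisely by this pair of dimensions. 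This is a one-line fix; the structure and conclusion of your proof are otherwise correct and match the paper's.
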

\label{signature}
\par We can look at the action of any $\alpha \in \Oe$ on $V_1$, or equivalently $\Lambda_1$. In particular, we can define the characteristic polynomial
\[
    p_\alpha(X) \coloneqq \det(X-\alpha|_{V_1}) = (X - \sigma(\alpha))^{n-1} (X - \overline{\sigma}(\alpha)) \in \Oe[X].
\]
This can be seen using the basis of $V_1$ described above, together with the fact that 
\[
(\alpha \otimes 1) e_\sigma = 1 \otimes \sigma(\alpha), \quad (\alpha \otimes 1) e_{\overline{\sigma}} = 1 \otimes \overline{\sigma}(\alpha).
\]
These polynomials are necessary to specify the endomorphism part of the moduli problem, in particular, to express Kottwitz's {determinant condition}, see \cite[Ch.~5]{kott92} and \cite[1.3.4]{kwl}.
\begin{remark}[CM Decomposition]
\label{cmdecomp}
One can consider the decomposition of $V \otimes_\Q E$ and $\Lambda \otimes_\Z {\mc{O}_{E, \mr{ur}}}$ induced by $e_\sigma, e_{\overline{\sigma}}$ via
\begin{align*}
    V \otimes_\Q E &\cong V_\sigma \oplus V_{\overline{\sigma}} \coloneqq e_\sigma (V \otimes_\Q E) \oplus e_{\overline{\sigma}} (V \otimes_\Q E),\\
    \Lambda \otimes_\Z {\mc{O}_{E, \mr{ur}}} &\cong \Lambda_\sigma \oplus \Lambda_{\overline{\sigma}} \coloneqq e_\sigma(\Lambda \otimes_\Z {\mc{O}_{E, \mr{ur}}}) \oplus e_{\overline{\sigma}}(\Lambda \otimes_\Z {\mc{O}_{E, \mr{ur}}}).
\end{align*}
We will call this the \emph{CM decomposition} of $V$ and $\Lambda$. One can consider this decomposition more generally for a module over the ring $\Oe \otimes_\Z \mc{O}_{E, \mr{ur}}$ or sheaves of modules over schemes over $\spec \mc{O}_{E, \mr{ur}}$ with a linear $\Oe$-action, as we will see below.
\end{remark}
\subsection{The reductive group}
From the integral datum $(\Oe, \overline{\cdot}, \Lambda, \left<\cdot, \cdot\right>, h)$, we obtain a group scheme $\mathbf{G}$ over $\Z$ whose $R$-points for $R \in \ul{\mr{Al}}\mr{g}_{\Z}$ are
\begin{align*}
    \mathbf{G}(R) &= \mathbf{GU}(\Lambda, \left(\cdot, \cdot\right))(R) \coloneqq \{g \in \edom_{\mc{O}_{E} \otimes R}(\Lambda \otimes_{\Z} R) \mid gg^\ast = \nu(g) \in R^\times \} \\
                &= \{(g, \nu(g)) \in \edom_{\mc{O}_{E} \otimes R}(\Lambda_R) \times R^\times \mid \left(gu, gv\right) = \nu(g) \left(u, v\right),\, \forall u, v \in \Lambda_R\}.
\end{align*}
The morphism of group schemes $\nu \colon \mathbf{G} \to \mb{G}_m$ is called \emph{similitude factor}. The kernel of $\nu$ is denoted $\mathbf{G}_1$. We have the following.
\begin{lemma}
\label{Levidecomp}
Let $s \colon \spec k \to \spec {\Z[1/2D]}$ be a morphism with $k$ an algebraically closed field. We have natural isomorphisms of group schemes:
\[
    \mathbf{G}_{\mc{O}_{E, \mr{ur}}} \cong \mr{GL}_{n} \times_{\mc{O}_{E, \mr{ur}}} \mathbb{G}_{m}, \, \mathbf{G}_{1, \mc{O}_{E, \mr{ur}}} \cong \mr{GL}_{n}
\]
and similarly for $\mathbf{G}_s$ and $\mathbf{G}_{1,s}$. In particular, $\mathbf{G}$ and $\mathbf{G}_1$ are reductive over $\spec {\Z[1/2D]}$.
\par Moreover, under the isomorphism $\mathbf{G}_{\mc{O}_{E, \mr{ur}}} \cong \mr{GL}_{n} \times \mathbb{G}_{m}$, the Levi subgroup $H$ of the parabolic fixing the decomposition $\Lambda \otimes_\Z \mc{O}_{E, \mr{ur}} \cong \Lambda_1 \oplus \Lambda_2$ corresponds to $\mr{GL}_{n-1} \times \mr{GL}_1 \times \mathbb{G}_{m}$. The similarly defined Levi $H_1$ of $\mathbf{G}_1$ is isomorphic after base change to $\mr{GL}_{n-1} \times \mr{GL}_1$.
\end{lemma}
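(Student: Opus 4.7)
The plan is to use the CM decomposition of Remark \ref{cmdecomp} to trivialise $\mathbf{G}$ after base change to $\mc{O}_{E, \mr{ur}}$, and then deduce the statements over $\spec \Z[1/2D]$ by descent and analysis of the geometric fibres.

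First, I would note that the idempotents $e_\sigma, e_{\overline{\sigma}}$ already live in $\Oe \otimes_\Z \mc{O}_{E, \mr{ur}}$, since both $2$ and $\delta_{E/\Q}^{-1} = \overline{\delta}_{E/\Q}/D$ are units there, so they provide a ring splitting $\Oe \otimes_\Z \mc{O}_{E, \mr{ur}} \cong \mc{O}_{E, \mr{ur}} \times \mc{O}_{E, \mr{ur}}$ and the integral CM decomposition $\Lambda \otimes_\Z \mc{O}_{E, \mr{ur}} \cong \Lambda_\sigma \oplus \Lambda_{\overline{\sigma}}$ into two free $\mc{O}_{E, \mr{ur}}$-modules of rank $n$. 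The key compatibility is that the involution on $\Oe$ sends $\delta_{E/\Q}$ to $-\delta_{E/\Q}$, hence swaps $e_\sigma \leftrightarrow e_{\overline{\sigma}}$ when applied through the first tensor factor; together with the Hermitian identity $(\alpha u, v) = (u, \overline{\alpha} v)$ this makes both $\Lambda_\sigma$ and $\Lambda_{\overline{\sigma}}$ totally isotropic and turns the restricted pairing $\Lambda_\sigma \times \Lambda_{\overline{\sigma}} \to \mc{O}_{E, \mr{ur}}$ into a perfect $\mc{O}_{E, \mr{ur}}$-duality.

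Next, for any $\mc{O}_{E, \mr{ur}}$-algebra $R$, an element $g \in \mathbf{G}(R)$ is $\Oe$-linear and hence splits as $g = g_\sigma \oplus g_{\overline{\sigma}}$ along the CM decomposition, and similarly for $g^\ast$. Via the perfect duality $\Lambda_{\overline{\sigma}} \cong \Lambda_\sigma^\vee$, the relation $gg^\ast = \nu(g)$ reduces to the single identity $g_{\overline{\sigma}} = \nu(g)\,(g_\sigma^\vee)^{-1}$, expressing $g_{\overline{\sigma}}$ in terms of $g_\sigma \in \mr{GL}(\Lambda_\sigma \otimes R)$ and $\nu(g) \in R^\times$. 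The resulting functorial bijection $\mathbf{G}(R) \leftrightarrow \mr{GL}(\Lambda_\sigma \otimes R) \times R^\times$ yields $\mathbf{G}_{\mc{O}_{E, \mr{ur}}} \cong \mr{GL}_n \times \mb{G}_m$, and setting $\nu(g) = 1$ gives $\mathbf{G}_{1, \mc{O}_{E, \mr{ur}}} \cong \mr{GL}_n$. The exact same argument applies over any algebraically closed field $k$ with $2D \in k^\times$ (since the idempotents exist in $\Oe \otimes k$), giving the statement for the geometric fibres; reductivity of $\mathbf{G}$ over $\spec \Z[1/2D]$ then follows by faithfully flat descent from $\spec \mc{O}_{E, \mr{ur}}$.

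For the Levi statement, I would use (\ref{v1}) to identify $\Lambda_1 \cap \Lambda_\sigma = \mr{Span}(e_\sigma \cdot e_1, \ldots, e_\sigma \cdot e_{n-1})$ and $\Lambda_2 \cap \Lambda_\sigma = \mr{Span}(e_\sigma \cdot e_n)$ as the two summands of the Hodge decomposition restricted to $\Lambda_\sigma$. Under the isomorphism constructed above, preserving $\Lambda_1 \oplus \Lambda_2$ reduces to $g_\sigma$ preserving this $(n-1)+1$ splitting of $\Lambda_\sigma$, because the corresponding decomposition of $\Lambda_{\overline{\sigma}}$ is obtained from it by taking annihilators under the perfect pairing and is therefore automatically preserved by $g_{\overline{\sigma}} = \nu(g)(g_\sigma^\vee)^{-1}$. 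This identifies $H$ with the block-diagonal subgroup $\mr{GL}_{n-1} \times \mr{GL}_1 \times \mb{G}_m$, and $H_1$ with $\mr{GL}_{n-1} \times \mr{GL}_1$. The main obstacle to carrying this out cleanly is keeping straight the bookkeeping between the conjugation on $\Oe$, the adjunction via the Hermitian pairing, and the CM idempotents; once these are aligned, the isomorphism and the Levi description fall out immediately.
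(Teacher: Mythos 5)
Your proof is correct and follows essentially the same route as the paper: base change along $\Z[1/2D]\to\mc{O}_{E,\mr{ur}}$ to split $\Oe\otimes\mc{O}_{E,\mr{ur}}$ via the idempotents $e_\sigma,e_{\overline\sigma}$, observe that conjugation swaps the factors, and deduce that the similitude equation determines $g_{\overline\sigma}$ from $(g_\sigma,\nu)$. The only real difference is presentational: the paper writes the constraint in the chosen basis as the matrix relation ${}^t g_{\overline{\sigma}} I_{n-1,1} g_\sigma = \nu I_{n-1,1}$ and dispatches the Levi claim by pointing at the explicit description of $\Lambda_1,\Lambda_2$, whereas you phrase the same fact intrinsically through the perfect duality $\Lambda_{\overline{\sigma}}\cong\Lambda_\sigma^\vee$ and then spell out the Levi argument by noting that $g$ stabilises $\Lambda_1\oplus\Lambda_2$ iff $g_\sigma$ stabilises the rank-$(n-1)+1$ splitting of $\Lambda_\sigma$, the companion splitting of $\Lambda_{\overline\sigma}$ being its annihilator decomposition and therefore automatically preserved by $g_{\overline\sigma}=\nu(g_\sigma^\vee)^{-1}$. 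This intrinsic version is a modest improvement in that it makes the reduction from $H$ to $\mr{GL}_{n-1}\times\mr{GL}_1\times\mb{G}_m$ fully explicit rather than a reference to the earlier formulae; one small caveat is that for a general geometric point $s\colon\spec k\to\spec\Z[1/2D]$ the idempotents only become canonical after choosing a factorisation through $\spec\mc{O}_{E,\mr{ur}}$ (two such choices exist, swapped by conjugation), but this affects naturality, not existence, of the isomorphism, exactly as in the paper's appeal to base change.
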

\begin{proof}
We have a natural isomorphism $\Oe \otimes_\Z \mc{O}_{E, \mr{ur}} \to \mc{O}_{E, \mr{ur}} \oplus \mc{O}_{E, \mr{ur}}, \alpha \otimes \beta \mapsto (\alpha\beta, \overline{\alpha}{\beta})$, corresponding to the idempotents $e_\sigma, e_{\overline{\sigma}}$, which induces the CM decomposition described in \ref{cmdecomp} and \ref{signature}. Notice that via this isomorphism the conjugation $\overline{\cdot}$ on the left corresponds to the involution $(r,s) \mapsto (s,r)$ on the right. In particular, functorially in $A \in \ul{\mr{Al}}\mr{g}_{\mc{O}_{E, \mr{ur}}}$, we have:
\begin{align*}
    \mathbf{G}(A) &\cong \{(g, \nu(g)) \in \mr{GL}(\Lambda_{\sigma, A} \oplus \Lambda_{\overline{\sigma}, A}) \times A^\times \mid \left(gu, gv\right) = \nu(g) \left(u, v\right),\, \forall u, v\}\\
    &\cong \{(g_\sigma, g_{\overline{\sigma}}, \nu) \in \mr{GL}_n(A) \times \mr{GL}_n(A) \times A^\times \mid {}^t g_{\overline{\sigma}} I_{n-1,1} g_\sigma = \nu I_{n-1,1}\}\\
    &\cong \mr{GL}_n(A) \times A^\times.
\end{align*}
The proof for $\mathbf{G}_1$ is analogous and the case of fibres over $s$ follows by base change. The statement about the Levi subgroups follows from \ref{v1}, \ref{v2} and \ref{signature}.
\end{proof}

The real points of $\mathbf{G}, \mathbf{G}_1$ give the classical unitary groups
\[
    \mathbf{G}(\R) = \mr{GU}(n-1, 1), \quad \mathbf{G}_1(\R) = \mr{U}(n-1,1)
\]
and one can see that $H(\R) = G(U(n-1) \times U(1)), H_1(\R) = U(n-1) \times U(1)$.
\par We can restrict $h$ to a morphism of algebraic groups $h \colon \mb{S} \to \mathbf{G}_\R$, denoted again by $h$. We see that $(\mathbf{G}_\Q, h)$ is a {Shimura datum}, as defined in \cite[1.1-5]{deltds} and \cite[2.1.1]{delvds}.

\subsection{The moduli problem}
We are almost ready to formulate the PEL moduli problem we are interested in. We will be working with a neat, $p$-hyperspecial level $K \subseteq \mathbf{G}(\mb{A}^\infty)$, see \cite[1.4.1.8-13]{kwl}.
\par Let $S$ be a scheme defined over $\mathcal{O}_{E, (p)} \coloneqq \Oe \otimes_\Z \Z_{(p)}$. We will denote the category of such schemes by $\ul{\mr{Sch}}_{\mathcal{O}_{E, (p)}}$. To $S$ we can associate a quadruple $\ul{A} = (A, \lambda, \iota, \eta_K)$ where:
\begin{enumerate}
    \item $A \to S$ is an abelian scheme.
    \item $\lambda \colon A \to A^\vee$ is a prime-to-$p$ polarisation.
    \item $\iota \colon \Oe \to \edom_S(A)$ is a ring homomorphism such that:
    \begin{enumerate}
        \item The {Rosati involution} is compatible with conjugation on $\Oe$, which amounts to the relation
        \[
            \lambda \iota(\overline{\alpha}) = \iota(\alpha)^\vee \lambda, \, \alpha \in \Oe.
        \]
        We will sometimes call this the \emph{Rosati relation}.
        \item On the locally free, finitely generated $\mathcal{O}_S$-module $\mr{Lie}(A/S)$ we have an induced action of $\Oe$. We can consider as above the characteristic polynomial $\det(X-\alpha|_{\mr{Lie}(A/S)})$, for any $\alpha \in \Oe$. We require that
        \[
            \det(X-\iota(\alpha)|_{\mr{Lie}(A/S)}) = p_\alpha(X) \in \Oe[X] \subset \mathcal{O}_S[X],
        \]
        for every $\alpha \in \Oe$.
    \end{enumerate}
    \item $\eta_K$ is an {$\Oe$-linear integral level $K$-structure}, described in further details below.
\end{enumerate}

An isomorphism of two such quadruples is an isomorphism of the underlying abelian schemes compatible with the remaining pieces of data in the natural way. In \cite[Ch.~2]{kwl}, it is shown that, under our assumptions on $K$, the functor
\begin{align*}
    \ul{\mr{Sch}}_{\mathcal{O}_{E, (p)}} & \longrightarrow \ul{\mr{Set}},\\
    S & \longmapsto \{( A, \lambda, \iota, \eta_K )\}/_{\cong},
\end{align*}
is represented by a smooth, quasi-projective scheme $\mathcal{S}_K \in \ul{\mr{Sch}}_{\mathcal{O}_{E, (p)}}$. One can show that $\mc{S}_K$ has relative dimension $n-1$.

\subsubsection{Level structures}
\label{lvlstr}
Assume now that $S \in \ul{\mr{Sch}}_{\mathcal{O}_{E, (p)}}$ is connected. Let $(A, \lambda, \iota)$ be a triple, notations as above, over $S$. Also let $s$ be a geometric point of $S$. The Tate $\mathbb{A}^{p, \infty}$-module $V^p A_s$ of $A_s$ is a {lisse} $\mathbb{A}^{p, \infty}$-sheaf, which is equivalent to saying that it is determined by the $\mathbb{A}^{p, \infty}$-module $V^p A_s \cong H_{1}^{\mr{\acute{e}t}}(A_s, \mathbb{A}^{p, \infty})$ together with its continuous $\pi_{1}^{\mr{\acute{e}t}}(S, s)$-action. Notice that $V^p A_s$ contains a natural $\hat{\Z}^{p}$-lattice $T^p A_s$.

\begin{definition}
A \emph{rational level $K^p$-structure} on $A$ is a $K^p$-orbit $\eta_K$ of isomorphisms
\[
    \eta \colon V_{\mathbb{A}^{p, \infty}} \longrightarrow V^p A_s
\]
of $\mathbb{A}^{p, \infty}$-modules such that:
\begin{enumerate}
    \item $\eta$ is compatible with the action of $\Oe$ on both sides.
    \item $\eta$ is isometric, for $\left<\cdot, \cdot\right>_{\mathbb{A}^{p, \infty}}$ on $V_{\mathbb{A}^{p, \infty}}$ and the {Weil pairing} $e^\lambda$ on $V^p A_s$, meaning that there is an isomorphism $\nu \colon  \mathbb{A}^{p, \infty} \to V^p \mathbb{G}_{m, s}$, uniquely determined by $(A, \lambda, \iota, \eta_K)$, such that $e^\lambda(\eta(u), \eta(v)) = \nu\left< u, v \right>$ for all $u, v \in V_{\mathbb{A}^{p, \infty}}$.
    \item The orbit $\eta_K$ is fixed by the action of $\pi_{1}(S, s)$.
\end{enumerate}
We say moreover that the structure $\eta_K$ is \emph{integral} if every choice of $\eta$ in the orbit $\eta_K$ induces an isomorphism of $\hat{\Z}^p$-modules
\[
    \eta \colon \Lambda_{\hat{\Z}^p} \longrightarrow T^p A_s
\]
and similarly $\nu$ induces an isomorphism $\hat{\Z}^p \to T^p \mb{G}_{m,s}$.
\end{definition}
Notice that $\mathbf{G}(\hat{\Z}^p)$ acts simply transitively on the set of isomorphisms $V_{\mathbb{A}^{p, \infty}} \to V^p A_s$ satisfying 1 and 2, when this set is non-empty. The notion of $K^p$-level structure is essentially independent of the choice of the point $s$, see \cite[Lemma~1.3.8.6]{kwl}. Over a non-connected base $S$ we can define a integral level structure by working separately on each connected component, since a locally noetherian topological space is the disjoint union of its connected components, which are clopens.
\begin{remark}
In \cite[1.3]{kwl} the author gives a more general definition of level structure without any noetherianness assumption. This becomes equivalent to the notion of level we have presented here in the locally noetherian case.
\end{remark}
\subsection{Hodge and determinant sheaves}
For $G \to S$ a group scheme we will use the notation
\[
    \ul{\omega}_{G/S} \coloneqq e^\ast(\Omega^1_{G/S})
\]
to denote the cotangent space at the identity $e$ of $G$. One can show that $f^\ast( \ul{\omega}_{G/S}) \cong \Omega^1_{G/S}$, where $f \colon G \to {S}$ denotes the structure morphism. Let now $G=A$ be an abelian scheme. Then, we have $f_\ast \mc{O}_{A} = \mc{O}_{{S}}$, so that
\[
    f_\ast \Omega^1_{A/S} \cong f_\ast f^\ast \ul{\omega}_{A/S} \cong f_\ast (\mc{O}_{A}) \otimes_{\mc{O}_{S}} \ul{\omega}_{A/S} = \ul{\omega}_{A/S}.
\]
We will call the sheaf $\ul{\omega}_{A/S}$ the \emph{Hodge sheaf} of $A/S$.
\par Let $\ul A = (A, \lambda, \iota, \eta_{N}) \in \mathcal{S}_K(S)$, for some $S \in \ul{\mr{Sch}}_{\mc{O}_{E,(p)}}$. If $\ul{A}$ is in the universal class, we will simply write $\ul{\omega}$ instead of $\ul{\omega}_{A/\mc{S}_K}$. Similarly, we will often write $\ul \omega$ instead of $\ul{\omega}_{A_B/S_B}$, for some base change $S_{K,B}$ of $\mc{S}_K$ given by $B \to \spec ({\mc{O}_{E,(p)}})$, depending on the context. We will care mostly about the case $B = \spec k$, with $k$ an algebraically closed field of characteristic $p$. As an $\mathcal{O}_S$-sheaf, $\ul{\omega}_{A/S}$ is locally free of rank $n$ with an action of $\Oe$ of {signature} $(n-1, 1)$: the determinant condition on $\mr{Lie}(A/{S})$ implies a dual analogous condition on $\ul{\omega}_{A/S}$ which is equivalent to the fact that
\[
    \ul{\omega}_{A/S, \sigma} \coloneqq e_\sigma \cdot \ul{\omega}_{A/S}, \quad \ul{\omega}_{A/S, \overline{\sigma}} \coloneqq e_{\overline{\sigma}} \cdot \ul{\omega}_{A/S},
\]
are locally free $\mathcal{O}_{\mathcal{S}_K}$-sheaves of ranks $n-1$ and $1$, respectively. This is the CM decomposition of $\ul{\omega}_{A/S}$, in the sense of \ref{cmdecomp}. We call $\ul{\omega}_{A/S, \sigma}$, $\ul{\omega}_{A/S, \overline{\sigma}}$ the {$\sigma$, ${\overline{\sigma}}$-components} of the Hodge bundle, respectively.
\label{secderhamcohom}
\par We recall some basic facts concerning the {de Rham cohomology} of abelian schemes, following \cite[2.5.1]{bbm}. First of all, for $f \colon X \to S$ a smooth morphism of schemes of relative dimension $g$ we can consider the {de Rham complex} of finite flat sheaves
\[
    \Omega_{X/S}^\bullet \colon 0 \longrightarrow \mc{O}_X \overset{d_{X/S}}{\longrightarrow} \Omega^1_{X/S} \longrightarrow \dots \longrightarrow \Omega^g_{X/S} \longrightarrow 0.
\]
The {$i$-th de Rham cohomology of} $X/S$ is  the $i$-th derived pushforward $H^i_\mr{dR}(X/S) \coloneqq R^i f_\ast \Omega_{X/S}^\bullet$ of this complex. On $\Omega^\bullet_{X/S}$, one can consider the usual {truncation filtration} $F^\bullet(\Omega^\bullet_{X/S})$, that is,
\begin{align*}
    F^i(\Omega^j_{X/S}) &= 0 \text{ if } j<i, \\
    F^i(\Omega^j_{X/S}) &= \Omega^j_{X/S} \text{ if } j \geq i.
\end{align*}
The corresponding filtration spectral sequence 
\[
        E^{p,q}_1 = R^q f_\ast \Omega^p_{X/S} \Longrightarrow H^{p+q}_\mr{dR}(X/S)
\]
is called the \emph{Hodge-de Rham spectral sequence}. If we assume that $f$ is quasi-compact, then from this spectral sequence we deduce that the $H^i_\mr{dR}(X/S)$ are quasi-coherent sheaves. 
\par We consider now the case where $X=A$ is an abelian scheme. 
We have the following.
\begin{proposition}[{\cite[Prop.~2.5.2]{bbm}}]
\label{derhamabelian}
Let $A/S$ be an abelian scheme. Then:
\begin{enumerate}
    \item For $i \geq 0$ the sheaves $H^i_\mr{dR}(A/S)$ are locally free and their formation commutes with base change.
    \item The natural morphism $\bigwedge^i H^1_\mr{dR}(A/S) \to H^i_\mr{dR}(A/S)$ is an isomorphism for all $i \geq 0$.
    \item The {Hodge-de Rham spectral sequence} of $A/S$ degenerates on the first page.
\end{enumerate}
\end{proposition}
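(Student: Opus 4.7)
The plan is to bootstrap everything from the fact that the cotangent sheaf of $A/S$ is trivial, together with the Hopf algebra structure induced by the multiplication $m \colon A \times_S A \to A$. Since $A$ is a commutative group scheme over $S$, translation invariance yields a canonical trivialisation $\Omega^1_{A/S} \cong f^\ast \ul{\omega}_{A/S}$ and hence $\Omega^p_{A/S} \cong f^\ast \bigwedge^p \ul{\omega}_{A/S}$ for every $p \geq 0$. By the projection formula, the $E_1$ page of the Hodge-de Rham spectral sequence decouples as
\[
    R^q f_\ast \Omega^p_{A/S} \cong \bigwedge^p \ul{\omega}_{A/S} \otimes_{\mc{O}_S} R^q f_\ast \mc{O}_A,
\]
so the entire analysis of the $E_1$ page reduces to understanding $R^q f_\ast \mc{O}_A$.

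Next, I would show that $R^q f_\ast \mc{O}_A$ is locally free, compatible with arbitrary base change, and that $R^q f_\ast \mc{O}_A \cong \bigwedge^q R^1 f_\ast \mc{O}_A$. Since $f$ is proper, flat, and of constant relative dimension $g$ with geometrically connected fibres, the cohomology and base change theorem reduces this to the classical fibrewise statement: over an algebraically closed field $k$, $\dim_k H^q(A_k, \mc{O}_{A_k}) = \binom{g}{q}$ and cup product realises $H^q$ as $\bigwedge^q H^1$. This is established by applying the Künneth formula together with the multiplication $m \colon A_k \times A_k \to A_k$, which makes $H^\bullet(A_k, \mc{O}_{A_k})$ a graded cocommutative Hopf algebra, forcing the exterior algebra structure on $H^1$. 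Combined with the projection formula above, this gives part (1) of the proposition at the level of the $E_1$ page.

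With this in hand, one upgrades the Hopf algebra argument from $\mc{O}_A$-cohomology to the full de Rham cohomology. I would first establish a relative Künneth isomorphism $H^\bullet_{\mr{dR}}(A \times_S A/S) \cong H^\bullet_{\mr{dR}}(A/S) \otimes_{\mc{O}_S} H^\bullet_{\mr{dR}}(A/S)$, using that $\Omega^\bullet_{A \times_S A/S}$ is the tensor product of the pullbacks of the de Rham complexes on each factor and that local freeness of the Hodge sheaves ensures the usual spectral sequence manipulation goes through. Pullback along $m$, together with the counit from the zero section and the antipode from $[-1] \colon A \to A$, then endows $H^\bullet_{\mr{dR}}(A/S)$ with a graded cocommutative Hopf algebra structure compatible with the Hodge filtration. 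By the base change already established for the $E_1$ page, degeneration can be checked fibrewise; over a field the Hodge-de Rham spectral sequence collapses because the total rank $\binom{2g}{i}$ of $H^i_{\mr{dR}}$, coming from the exterior algebra structure on $H^1$, agrees with $\sum_{p+q=i} \binom{g}{p}\binom{g}{q}$ by Vandermonde, leaving no room for nonzero differentials. The isomorphism $\bigwedge^i H^1_{\mr{dR}}(A/S) \cong H^i_{\mr{dR}}(A/S)$ then falls out of the same exterior algebra structure.

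The main obstacle will be setting up the de Rham Künneth formula in this relative generality and verifying that the resulting Hopf algebra structure respects the Hodge filtration, so that fibrewise degeneration propagates to the relative statement. Once everything is reduced to Hopf algebras over a field, the structural step is essentially classical and combinatorial, and the three assertions fall out together.
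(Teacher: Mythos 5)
The paper does not give its own proof of this statement; it cites \cite[Prop.~2.5.2]{bbm} directly, so there is no internal argument to compare against. Judged on its own terms, your reduction steps are fine: translation-invariance of $\Omega^1_{A/S}$, the projection formula on the $E_1$ page, cohomology and base change, and the Hopf-algebra computation of $H^\bullet(A_k,\mc{O}_{A_k})$ are all correct and standard.

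The gap is in the fibrewise degeneration step, which you dispatch too quickly. You assert that the exterior-algebra structure on $H^\bullet_\mr{dR}(A_k/k)$ gives $\dim_k H^i_\mr{dR} = \binom{2g}{i}$, but the Hopf-algebra formalism does not deliver either half of that sentence. First, $\dim_k H^1_\mr{dR} = 2g$ is logically equivalent to degeneration in degree one: from the $E_1$ page one reads off $\dim H^1_\mr{dR} = 2g - \mr{rk}\,d_1^{0,1}$, so asserting the number $2g$ is precisely asserting $d_1^{0,1}=0$, which is what you are trying to prove. Second, in characteristic $p$ the Borel structure theorem for finite-dimensional graded cocommutative Hopf algebras allows truncated polynomial factors $k[y]/(y^{p^n})$ on even-degree primitives; these are compatible with Poincar\'e duality (for instance $k[y]/(y^p)$ with $y$ in degree $2$ has a perfect top-degree pairing), so ``Hopf algebra plus Poincar\'e duality'' does not by itself force an exterior algebra on $H^1_\mr{dR}$. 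The analogous argument for $H^\bullet(A_k,\mc{O}_{A_k})$ escapes these problems only because one knows independently that $\dim H^1(\mc{O}_{A_k}) = g$ (the tangent space of $\mr{Pic}^0$) and that $H^i(\mc{O}_{A_k})=0$ for $i>g$, inputs with no direct analogue on the de Rham side. To repair the proof you need an a priori identification of $\dim H^1_\mr{dR}(A_k/k)$ with $2g$: the usual routes are Oda's isomorphism $H^1_\mr{dR}(A_k/k)\cong \mr{Lie}(E(A_k^\vee))$ with the universal vector extension (whose dimension is $2g$ by Grothendieck), the crystalline comparison $H^1_\mr{dR}\cong H^1_\mr{cris}\otimes_{W(k)}k$, or lifting $A_k$ to characteristic zero and applying semicontinuity. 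This identification is where the real content of the proposition sits; your write-up concentrates the attention on the Künneth and Hopf-algebra formalism, which is the comparatively soft part, and leaves the hard dimension count unjustified.
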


\begin{remark}
Point 3 of \ref{derhamabelian} is particularly important, since the corresponding statement for $X/k$ a non-singular variety over an algebraically closed field $k$ can fail in positive characteristic, see for instance \cite[I]{mumford}.
\end{remark}
Here we are mostly interested in the case $p+q=1$ in point $3$ of Proposition \ref{derhamabelian}, which is the content of \cite[Lemma~2.5.3]{bbm}. In particular, we have the short exact sequence
\begin{equation}
\label{hfilt1}
    0 \longrightarrow \ul{\omega}_{A/S} \longrightarrow H^1_\mr{dR}(A/S) \longrightarrow R^1f_\ast \mc{O}_A \longrightarrow 0.
\end{equation}
From of \cite[5.1.1]{bbm}, we see that $R^1f_\ast \mc{O}_A$ is naturally isomorphic to
\[
    \mr{Lie}(A^\vee/S) \cong \ul{\omega}_{A^\vee/S}^\vee \coloneqq \ul{\hom}_{\mc{O}_S}(\ul{\omega}_{A^\vee/S}, \mc{O}_S),
\]
so we can rewrite the previous short exact sequence as 
\begin{equation}
\label{hfilt2}
    0 \longrightarrow \ul{\omega}_{A/S} \longrightarrow H^1_\mr{dR}(A/S) \longrightarrow \ul{\omega}_{A^\vee/S}^\vee \longrightarrow 0.
\end{equation}
From this, we see that one can make point 1 of Proposition \ref{derhamabelian} more explicit and deduce that, in fact, $H^1_\mr{dR}(A/S)$ has rank $2g$, with $g = \dim(A/S)$.
In particular, if $A$ is endowed with a prime-to-$p$ polarisation $\lambda$, we can use it to identify $\omega_{A^\vee/S}^\vee$ and $\omega_{A/S}^\vee$ to get 
\begin{equation}
\label{hfilt3}
    0 \longrightarrow \ul{\omega}_{A/S} \longrightarrow H^1_\mr{dR}(A/S) \longrightarrow \ul{\omega}_{A/S}^\vee \longrightarrow 0.
\end{equation}
We call \ref{hfilt2} the \emph{natural Hodge filtration}, as opposed to \ref{hfilt3}, which we call the \emph{polarised Hodge filtration}. We simply call the short exact sequences \ref{hfilt1}, \ref{hfilt2} and \ref{hfilt3} the \emph{Hodge filtration} when it is clear from the context to which one we are referring.
\par If we suppose now that $S \in \ul{\mr{Sch}}_{\mc{O}_{E,(p)}}$ and $\ul{A} \in \mc{S}_K(S)$, we can also consider the action of $\Oe$ and split the natural Hodge filtration according to the CM decomposition of its terms, to obtain
\begin{align}
    \label{hfiltsigma1}
    0 \longrightarrow \ul{\omega}_{A/S, \sigma} &\longrightarrow H^1_\mr{dR}(A/S)_\sigma \longrightarrow \ul{\omega}_{A^\vee/S,{\sigma}}^\vee \longrightarrow 0, \\
    \label{hfiltosigma1}
    0 \longrightarrow \ul{\omega}_{A/S, \overline{\sigma}} &\longrightarrow H^1_\mr{dR}(A/S)_{\overline{\sigma}} \longrightarrow \ul{\omega}_{A^\vee/S, \overline{\sigma}}^\vee \longrightarrow 0,
\end{align}
where the action of $\Oe$ on $\ul{\omega}_{A^\vee/S}$ is induced by the action of $\Oe$ on $A^\vee$ given by $a \mapsto \iota(a)^\vee$. Notice that because of the Rosati relation, the polarisation {conjugates} the action on $\ul{\omega}_{A/S}$, that is, we have isomorphisms
\[
    \ul{\omega}_{A^\vee/S, \sigma} \cong \ul{\omega}_{A/S, \overline{\sigma}}, \quad \ul{\omega}_{A^\vee/S, \overline{\sigma}} \cong \ul{\omega}_{A/S, {\sigma}}
\]
induced by $\lambda$.
In particular, splitting according to the $\Oe$-action the short exact sequence \ref{hfilt3} we obtain
\begin{align}
    \label{hfiltsigma2}
    0 \longrightarrow \ul{\omega}_{A/S, \sigma} &\longrightarrow H^1_\mr{dR}(A/S)_\sigma \longrightarrow \ul{\omega}_{A/S,\overline{\sigma}}^\vee \longrightarrow 0, \\
    \label{hfiltosigma2}
    0 \longrightarrow \ul{\omega}_{A/S, \overline{\sigma}} &\longrightarrow H^1_\mr{dR}(A/S)_{\overline{\sigma}} \longrightarrow \ul{\omega}_{A/S, {\sigma}}^\vee \longrightarrow 0.
\end{align}
We call \ref{hfiltsigma1} and \ref{hfiltosigma1} the $\sigma$ and {$\overline{\sigma}$-components}, respectively, of the natural Hodge filtration. Similarly, we call \ref{hfiltsigma2} and \ref{hfiltosigma2} the $\sigma$ and {$\overline{\sigma}$-components}, respectively, of the polarised Hodge filtration. From \ref{hfiltsigma2} and \ref{hfiltosigma2}, given that $\ul{\omega}_{A/S, \sigma}$ and $\ul{\omega}_{A/S, \overline{\sigma}}$ have ranks $n-1$ and $1$, respectively, we can deduce that $H^1_\mr{dR}(A/S)_\sigma$ and $H^1_\mr{dR}(A/S)_{\overline{\sigma}}$ both have rank $n$.
\par From the Hodge filtration \ref{hfilt3} we further deduce the isomorphism $\det H^1_\mr{dR}(A/S) \cong \mc{O}_S$, which depends on the choice of the polarisation $\lambda$.
\begin{definition}
We call $\delta_{A/S} \coloneqq \det H^1_\mr{dR}(A/S)$ the \emph{determinant sheaf}.
\end{definition}

Suppose again that $S \in \ul{\mr{Sch}}_{\mc{O}_{E,(p)}}$ and $\ul{A} \in \mc{S}_K(S)$, so that we can consider the invertible sheaves 
\begin{align*}
    \delta_{A/S, \sigma} &\coloneqq \det H^1_\mr{dR}(A/S)_\sigma, \\
    \delta_{A/S, \overline{\sigma}} &\coloneqq \det H^1_\mr{dR}(A/S)_{\overline{\sigma}}.
\end{align*}
Notice that $\delta_{A/S, \sigma} \otimes \delta_{A/S, \overline{\sigma}} \cong \delta_{A/S}$. In particular, the choice of a polarisation will induce an isomorphism $\delta_{A/S, \sigma}^{-1} \cong \delta_{A/S, \overline{\sigma}}$.
Taking determinants of \ref{hfiltsigma1} and \ref{hfiltosigma1} we deduce
\begin{equation}
\label{determinanteq}
    \det \ul{\omega}_{A/S, \sigma} \cong \delta_{A/S,\sigma} \otimes \ul{\omega}_{A^\vee/S, \sigma}, \quad \ul{\omega}_{A/S, \overline{\sigma}} \cong \delta_{A/S,\overline{\sigma}} \otimes \det \ul{\omega}_{A^\vee/S, \overline{\sigma}},
\end{equation}
identifications which \emph{do not involve} the polarisation $\lambda$. By making use of $\lambda$ instead, one can deduce similar isomorphisms from \ref{hfiltsigma2} and \ref{hfiltosigma2}.
We call $\delta_{A/S, \sigma}$ and $\delta_{A/S, \overline{\sigma}}$ the $\sigma$ and $\overline{\sigma}$\emph{-determinant sheaf}, respectively.
\begin{remark}
\label{noncanonicremark}
The non-canonicity of the isomorphism $\det H^1_\mr{dR}(A/S) \cong \mc{O}_S$ is relevant when one considers the action of Hecke operators. In fact, the isomorphism in question is not Hecke-equivariant and the invertible sheaf $\delta_{A/S}$ is not trivial as a Hecke module.
\end{remark}
Let $k$ be a perfect field of characteristic $p$ which is also a $\mc{O}_E$-algebra, $S \in \ul{\mr{Sch}}_k$ a reduced scheme and take $\ul{A} \in \mc{S}_K(S)$.
\begin{lemma}[\texorpdfstring{$\delta$}{The determinant sheaf} is torsion]
\label{lemtorsiondelta}
There is a natural isomorphism, independent of the level structure and the polarisation $\lambda$, $\delta_{A/S, \sigma}^{p-1} \cong \mc{O}_{S}$.
\end{lemma}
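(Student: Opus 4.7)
The plan is to exhibit a canonical nowhere-vanishing global section of $\delta^{p-1}_{A/S, \sigma}$ using the Verschiebungs of $A$ and of its dual abelian scheme $A^\vee$, neither of which refers to the polarisation $\lambda$ or the level structure $\eta_K$.

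Taking determinants in the polarisation-free natural Hodge filtration \ref{hfiltsigma1} yields the canonical identification
\[
    \delta_{A/S, \sigma} \cong \det \ul{\omega}_{A/S, \sigma} \otimes \ul{\omega}_{A^\vee/S, \sigma}^{-1},
\]
in which both factors are line bundles: $\ul{\omega}_{A/S, \sigma}$ has rank $n-1$ by the signature assumption, and $\ul{\omega}_{A^\vee/S, \sigma}$ has rank one since the Rosati relation reverses the signature on $A^\vee$. Since $p$ is split in $E$, Verschiebung respects the CM decomposition on both $A$ and $A^\vee$, and I would form the intrinsic sections
\begin{align*}
    h^A_\sigma &\coloneqq \det\bigl(V_{A/S}^\ast|_{\ul{\omega}_{A/S, \sigma}}\bigr) \in H^0\bigl(S, (\det \ul{\omega}_{A/S, \sigma})^{p-1}\bigr),\\
    h^{A^\vee}_\sigma &\coloneqq V_{A^\vee/S}^\ast|_{\ul{\omega}_{A^\vee/S, \sigma}} \in H^0\bigl(S, \ul{\omega}_{A^\vee/S, \sigma}^{p-1}\bigr).
\end{align*}
Each depends only on $(A, \iota)$, so the ratio $h^A_\sigma \otimes (h^{A^\vee}_\sigma)^{-1}$, which trivialises $\delta^{p-1}_{A/S, \sigma}$ over the $\mu$-ordinary open $S^\mu$ where both factors are nowhere-vanishing, will automatically be independent of $\lambda$ and $\eta_K$.

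The remaining step is to show this ratio extends to a nowhere-vanishing section on all of $S$, which amounts to verifying that $h^A_\sigma$ and $h^{A^\vee}_\sigma$ have the same zero divisor --- the non-ordinary locus $S^{\mr{no}}$ --- with the same multiplicity one. By pullback the question reduces to the universal $\mc{S}_K$, and using the polarisation-induced identification $\ul{\omega}_{A^\vee/S, \sigma} \cong \ul{\omega}_{A/S, \overline{\sigma}} = \mc{L}$ (invoked purely as a check, not in the construction) one recognises $h^{A^\vee}_\sigma$ as the partial Hasse invariant $h_{\overline{\sigma}}$ of the introductory Picard-case sketch, so that the simultaneous simple vanishing of $h^A_\sigma$ and $h^{A^\vee}_\sigma$ along $S^{\mr{no}}$ reduces to the fact recalled there, generalised verbatim to signature $(n-1,1)$. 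The hardest part is this equal-vanishing-order statement, which if needed can be verified directly by a local Dieudonn\'e-module computation on $A[\mf{p}^\infty]$ at a non-ordinary geometric point, using the Kottwitz signature condition. Once it is in place, the resulting isomorphism $\delta^{p-1}_{A/S, \sigma} \cong \mc{O}_S$ is natural in $S$ and independent of $\lambda$ and $\eta_K$ by construction.
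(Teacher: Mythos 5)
Your route is genuinely different from the paper's, and in its present form it has a gap. The paper's proof is a short, purely linear-algebraic comparison: by Dieudonn\'e theory there are two natural short exact sequences
\[
    0 \to \ul{\omega}_{A/S,\sigma}^{(p)} \to H^{(p)}_\sigma \xrightarrow{F_\sigma} \mc{U}_\sigma \to 0, \qquad
    0 \to \mc{U}_\sigma \to H_\sigma \xrightarrow{V_\sigma} \ul{\omega}_{A/S,\sigma}^{(p)} \to 0,
\]
whose determinants read $\delta_{\sigma}^{p} \cong (\det \ul{\omega}_{\sigma})^{p} \otimes \mc{U}_\sigma$ and $\delta_{\sigma} \cong (\det \ul{\omega}_{\sigma})^{p} \otimes \mc{U}_\sigma$, and dividing gives the isomorphism immediately, canonically, over an arbitrary reduced base $S$. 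No density, no normality, no divisor bookkeeping is needed. You instead build partial Hasse invariants $h^A_\sigma$ and $h^{A^\vee}_\sigma$, form their rational ratio over the $\mu$-ordinary locus, and try to extend it to a nowhere-vanishing global section.

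The gap is exactly where you say ``the hardest part'' is: you must prove that $\mathrm{div}(h^A_\sigma) = \mathrm{div}(h^{A^\vee}_\sigma)$ on $\mc{S}_{K}$ with the same multiplicity along every component of the non-ordinary divisor, and this is asserted, not demonstrated. It is not a formal consequence of the two loci of non-invertibility agreeing; generic vanishing order along $\overline{S}_{K,w_2}$ is a genuinely quantitative statement (it is true, but it requires an actual computation in the deformation theory or Dieudonn\'e module at an almost-ordinary point, in the spirit of the results of Boxer and Goldring--Koskivirta on Hasse invariant vanishing orders). Moreover, your argument only runs on the universal $\mc{S}_K$ (where the $\mu$-ordinary locus is dense and the scheme is smooth, hence normal, so that divisors make sense) and then pulls back to an arbitrary reduced $S$, an extra reduction the paper avoids. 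Finally, the Dieudonn\'e-sequence proof transports verbatim to other PEL types and to $p$ merely unramified, as the paper remarks, whereas your argument is tied to the structure of the $\mu$-ordinary Hasse invariant and the codimension-one EO stratum. I would advise either carrying out the multiplicity computation in full, or switching to the determinant comparison of the two exact sequences, which sidesteps the issue entirely.
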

\begin{proof}
Write $H = H^1_\mr{dR}(A/S)$. Then, by Dieudonn\'e theory and in particular {\cite[Cor.~5.11]{odathesis}}, we have two natural short exact sequences, which, in particular, do not depend on the level structure,
\begin{align*}
    0 \longrightarrow \ul{\omega}_{A/S,\sigma}^{(p)} \longrightarrow &H^{(p)}_\sigma \overset{F_\sigma}{\longrightarrow} \mc{U}_\sigma \longrightarrow 0,\\
    0 \longrightarrow \mc{U}_\sigma \longrightarrow &H_\sigma \overset{V_\sigma}{\longrightarrow} \ul{\omega}_{A/S,\sigma}^{(p)} \longrightarrow 0.
\end{align*}
Take the highest exterior power of both: we obtain $\delta_{A/S, \sigma}^p \cong \det \ul{\omega}_{A/S,\sigma}^{p} \otimes \mc{U}_\sigma$ from the first sequence and $\delta_{A/S, \sigma} \cong \det \ul{\omega}_{A/S,\sigma}^{p} \otimes \mc{U}_\sigma$ from the second. From this we deduce the natural isomorphism $\delta_{A/S, \sigma}^{p-1} \cong \mc{O}_{S}$.
\end{proof}

\begin{remark}
Although we will not make use of this fact here, we remark the same argument generalises naturally to other PEL Shimura varieties and to $p$ unramified in $E$. 
\end{remark}

\subsection{The Weyl modules of \texorpdfstring{$\mr{GL}_m$}{the General Linear Group}}
\label{weylmodules}
We recall the description of a natural class of algebraic representations $\ul{\mr{Re}}\mr{p}_R(\mr{GL}_m)$ of $\mr{GL}_{m, R}$, $m \geq 2$, into locally free finite rank $R$-modules, for $R$ a Dedekind domain. Our main reference will be \cite{jantzen}. Until the end of this discussion we write $G = \mr{GL}_{m, R}$ to ease our notations.
\par Let $B \subset G$ be the Borel subgroup of {upper}-triangular matrices, take $T \subset B$ the torus of diagonal matrices and $U \subset B$ the unipotent radical of $B$. We will denote by ${B}^- \supset {U}^-$ the Borel of $G$ opposite to $B$ with respect to $T$, with its unipotent radical.
Write $X(T) \coloneqq \hom(X, \mb{G}_m)$ for the group of characters of $T$. We are interested in representations associated to \emph{dominant weights} of $G$ with respect to $B \supset T$, which form the submonoid $X(T)_+ \subset X(T)$. Through the natural isomorphism
\begin{align*}
    \Z^m &\longrightarrow X(T),\\
    (k_1, \dots, k_m) & \longmapsto (\mr{diag}(t_1, \dots, t_m) \mapsto t_1^{k_1} \cdots t_n^{k_m})
\end{align*}
dominant weights correspond to tuples $(k_1, k_2, \dots, k_m) \in \Z^m$ such that $k_i \geq k_{i+1}$. We will sometimes write $\ul{k} = (k_1, k_2, \dots, k_m) \in X(T)$ for short. The projection $B^- \to T$ allows us to consider $\ul{k}$ a degree one representation of $B^-$. \par
Thanks to \cite[I.5.8]{jantzen}, we can associate to any $B^-$-module $M$ a sheaf $\mc{L}(M)$ on the projective quotient $\pi \colon G \to G/B^-$ and, in particular, we can apply this construction to any $\ul{k} \in X(T)$. From \cite[I.5.12]{jantzen}, we see that $H^0({G}/B^-, \mc{L}_{\ul{k}})$ is isomorphic to $\mr{ind}_{B^-}^{G}(\ul{k})$. To ease notations, like \cite{jantzen}, we write
\[
    H^i(\ul{k}) \coloneqq H^i(G/B^-, \mc{L}_{\ul{k}}), \, H^i(\ul{k}, s) \coloneqq H^i((G/B^-)_s, \mc{L}_{\ul{k}, s}),
\]
for $s \in \spec R$.
\par 
Let us also recall that for $\ul k$ dominant and such that $k_m \geq 0$, that is, when $\ul k$ is a \emph{partition}, we can define a \emph{Schur functor}
\[
    \mathbb{S}^{\ul{k}} \colon \ul{\mr{Mod}}_R \to \ul{\mr{Mod}}_R
\]
from the category of $R$-modules to itself (or, more generally, the category of quasi-coherent sheaves over a scheme $S$). See for instance \cite{schurschur} or \cite[Ch.~8]{fultontableaux}. Let $\ul l$ denote the partition of $\lvert \ul k \rvert = \sum_i k_i$ conjugate to $\ul k$, that is, $l_j \coloneqq \# \{ i \mid k_i \geq j \}$, for $j \geq 1$. For $M \in \ul{\mr{Mod}}_R$, we can describe it as
\[
    \mathbb{S}^{\ul{k}}(M) = \im( \otimes_j \wedge^{l_j}(M) \to \otimes_j M^{\otimes l_j} \to M^{\otimes \lvert k \rvert} \to \otimes_i M^{\otimes k_i} \to \otimes_i \mr{Sym}^{k_i}(M))
\]
where the first map is induced by $\wedge^l M \to M^{\otimes l}, m_1 \wedge m_2 \wedge \cdots \wedge m_l \mapsto \sum_{\nu \in \mr{S}_l} \mr{sgn}(\nu) m_{\nu(1)} \otimes m_{\nu(2)} \otimes \cdots m_{\nu(l)}$, the maps in the middle are identifications of $M^{\otimes \lvert k \rvert}$ grouping its factors first in terms of the columns and then in terms of the rows of the Young diagram for $\ul k$ and the last map is the natural quotient. In particular, if $M$ is free of rank $r$ with ordered basis $m_1, m_2, \dots, m_r \in M$, so is $\mathbb{S}^{\ul{k}}(M)$, with basis elements corresponding with \emph{semi-standard Young tableaux} of diagram $\ul k$ and entries in $\{1, 2, \dots, r\}$.
\begin{example}
Let $M \in \ul{\mr{Mod}}_R$ be finite locally free of constant rank $r$. Then, for $\ul k = (k, 0, \dots, 0)$, we get $\mb{S}^{\ul k}(M) = \mr{Sym}^k(M)$. For $\ul k = (1, 1, \dots, 1)$, instead, we have $\mb{S}^{\ul k}(M) = \wedge^m (M)$ (which is $0$, if $r < m$).
\end{example}

\begin{proposition}
\label{propalgreps}
We have the following:
\begin{enumerate}
    \item The sheaf $\mc{L}_{\ul{k}}$ is invertible and, if $\ul{k}$ is dominant, ample.
    \item The $G$-module $H^0(\ul{k})$ is finite, locally free over $R$ and, for a dominant weight $\ul{k}$, non-zero. Moreover, $H^i(\ul{k})=0$, for $i>0$, for $\ul{k}$ dominant.
    \item If $k_m \geq 0$, we have a natural isomorphism $\mathbb{S}^{\ul{k}}(\mr{st}) \cong H^0(\ul{k})$ in $\ul{\mr{Re}}\mr{p}_R(G)$, where $\mr{st} = R^m$ denotes the standard representation of $G$.
\end{enumerate}
\end{proposition}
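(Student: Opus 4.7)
The plan is to extract each of the three claims from the general theory developed in Jantzen, specialized to $G = \mr{GL}_{m,R}$ where the statements admit explicit verification.

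First, for (1), invertibility is automatic from the construction of $\mc{L}(M)$ as an associated bundle: when $M$ is the one-dimensional $B^-$-module $\ul{k}$, the resulting sheaf on $G/B^-$ is locally free of rank one. For ampleness in the dominant case I would use the Pl\"ucker embedding $G/B^- \hookrightarrow \prod_{i=1}^{m-1} \gr(i,m)$ and identify the fundamental weights $\omega_i = (1,\dots,1,0,\dots,0)$ with pullbacks of the tautological (very ample) line bundles on the Grassmannian factors. Writing any dominant $\ul{k}$ as a non-negative integer combination of fundamental weights together with a power of the determinant character, $\mc{L}_{\ul{k}}$ becomes a tensor product of globally generated line bundles; the precise statement is available from Jantzen~II.8.

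Second, for (2) I would invoke Kempf's vanishing theorem (Jantzen~II.4.5), which is valid in arbitrary characteristic and over arbitrary bases, to deduce $H^i(\ul{k}) = 0$ for all $i > 0$ and $\ul{k}$ dominant. Combined with cohomology-and-base-change, this forces $H^0(\ul{k})$ to be flat over $R$ of constant rank, hence locally free and finitely generated, and its formation commutes with arbitrary base change. Non-vanishing in the dominant case follows by exhibiting an explicit highest-weight section over the big Bruhat cell $U^- \cdot e$ of $G/B^-$ and translating by the $G$-action.

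Third, for (3), I would interpret $H^0(\ul{k}) = \mr{ind}_{B^-}^G(\ul{k})$ as the $R$-module of regular functions $f \colon G \to \aff^1_R$ satisfying $f(gb) = \ul{k}(b)^{-1} f(g)$ for $b \in B^-$, and construct a natural $G$-equivariant map
\[
    \mb{S}^{\ul{k}}(\mr{st}) \longrightarrow H^0(\ul{k})
\]
sending a pure tensor of exterior powers of the standard representation to the matrix coefficient reading off the corresponding iterated minor determinants. Naturality and equivariance are immediate; to verify that the map is an isomorphism I would reduce to the universal case $R = \Z$, observe that both sides commute with base change (using (2) and the universal construction of Schur functors), and check the statement after base change to $\Q$ via classical Weyl theory, and characteristic by characteristic at the level of formal characters using Kempf vanishing. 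The \emph{main obstacle} is promoting the comparison from the generic fibre to an integral isomorphism: one needs a common $R$-basis on both sides compatible with the weight decomposition. In practice I would use the standard tableau / straightening-law description of $\mb{S}^{\ul{k}}(\mr{st})$ and match it explicitly with the basis of $H^0(\ul{k})$ produced by translating the highest-weight section along the Bruhat cells, concluding by Nakayama on each residue field of $R$.
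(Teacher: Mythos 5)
Your plan is correct in outline but takes a genuinely different route from the paper in all three parts, and the third part deserves more care than you give it.

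For (1), the paper cites Jantzen II.4.4 to get ampleness on each geometric fibre and then passes to the relative statement; you instead build the Pl\"ucker embedding into a product of Grassmannians. Both work, with the same caveat that for merely dominant (non-regular) weights the bundle is only semi-ample, a point you partially acknowledge and the paper glosses over. For (2), your argument is cleaner than the paper's: you go directly via Kempf vanishing on fibres plus cohomology-and-base-change to get simultaneously the vanishing of $H^i(\ul{k})$, local freeness, and compatibility with base change, whereas the paper first establishes flatness of $H^0(\ul{k})$ by exhibiting it as a submodule of the coordinate ring of $G$ (torsion-free over Dedekind) and only then invokes semicontinuity and Grauert. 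Both are valid; your route is more economical.

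The real divergence is (3). The paper first constructs an explicit $G$-equivariant map $\wedge^r \mr{st} \to H^0(\omega_r)$ by sending a wedge of standard basis vectors to a minor determinant, checks on each fibre that this induces the isomorphism of simple $G_s$-modules from Jantzen II.2.15, and then for general $\ul k$ multiplies sections and verifies the fibrewise isomorphism by \emph{socle comparison}: both $\mathbb{S}^{\ul k}(\mr{st}_s)$ and $H^0(\ul k, s)$ have one-dimensional spaces of $U_s$-invariants, hence simple socles containing the highest weight vector, so the map is injective on each fibre; surjectivity is imported from Jantzen II.4.19/II.14.20 and Nakayama does the rest. Your proposal, by contrast, leans on the straightening-law (standard tableau) basis for $\mathbb{S}^{\ul k}(\mr{st})$, matched against a basis of $H^0(\ul k)$ obtained from translating the highest-weight section along the Bruhat cells. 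This can be made to work, but it is a substantial piece of machinery (a full Hodge-algebra/bideterminant argument) that goes well beyond the references the paper uses, and your statement ``characteristic by characteristic at the level of formal characters using Kempf vanishing'' only gives equality of \emph{dimensions} at each $s$, not bijectivity of your specific map; you would still need the explicit basis-matching step to close the argument. The paper's socle argument achieves the same end without any combinatorics, by exploiting the representation-theoretic structure of $H^0(\ul k,s)$ directly. Your route is correct but heavier; the paper's is the more economical choice given the machinery already available in Jantzen.
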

\begin{proof}
Since $R$ is noetherian, so is $G/B^-$. Moreover, by \cite[II.1.10(2)]{jantzen}, $\pi \colon G \to G/B^-$ is a locally trivial quotient map, so by \cite[I.5.16(2)]{jantzen} $\mc{L}_{\ul{k}}$ is an invertible sheaf. By \cite[II.4.4]{jantzen}, for $\ul{k}$ dominant, $\mc{L}_{\ul{k}, s}$ is ample for all $s \in \spec R$, so that $\mc{L}_{\ul{k}}$ is ample on $G/B^-$.
\par Since $\mc{L}_{\ul{k}}$ is an invertible sheaf over $G/B^-$ and $G/B^-$ is flat over $R$, $\mc{L}_{\mr{k}}$ is flat over $\spec R$. The fact that $H^0(\ul{k})$ is finitely generated over $R$ follows from \cite[I.5.12(c)]{jantzen}. Since $H^0(\ul{k})$ is a submodule of $R[X_{i,j}, \det X^{-1}]_{1 \leq i,j \leq m}$, it is flat, because $R$ is a Dedekind domain and $R[X_{i,j}, \det X^{-1}]$ is flat, see  \cite[\href{https://stacks.math.columbia.edu/tag/092S}{Lemma 092S}]{stacks-project}. Therefore $H^0(\ul{k})$ is finite locally free. Let as above $s \in \spec R$. If $\ul{k}$ is dominant, by \cite[II.2.6]{jantzen} and by {Kempf's vanishing theorem}, \cite[II.4.5]{jantzen}, we have that
\[
    H^0(\ul{k}, s) \neq 0, \quad H^i(\ul{k}, s)=0 \text{ for } i>0,
\]
respectively. Therefore, by semicontinuity and Grauert's theorem, $H^i(\ul{k}) = 0$ for $i>0$ and thus, by proper base change, $H^0(\ul{k}) \otimes_R k(s) \cong H^0(\ul{k}, s)$, so that $H^0(\ul{k}) \neq 0$, in particular. 
\par 
Let $\ul k$ be a dominant weight such that $k_m \geq 0$ and let $I$ be a semi-standard Young tableaux of diagram $\ul k$. Let $e_I \in \mb{S}^{\ul k}(\mr{st})$ denote the corresponding basis element, obtained from the standard basis $e_1, \cdots, e_m \in \mr{st}$. In particular, write $I_0$ for the tableau obtained filling the $i$-th row with $k_i$ copies of $i$, so that the element $e_{I_0}$ is the image of
\[
    (e_1 \wedge e_2 \wedge \cdots \wedge e_{l_1}) \otimes (e_1 \wedge \cdots \wedge e_{l_2}) \otimes \cdots \otimes (e_1 \wedge \cdots \wedge e_{l_m}) \in \otimes_j \wedge^{l_j}(\mr{st})
\]
via the map $\otimes_j \wedge^{l_j}(\mr{st}) \to \mb{S}^{\ul k}(\mr{st})$. One can see that $e_{I_0}$ has $T$-weight $\ul k$. Given $I$, we can also construct an element $f_I$ of $H^0(\ul k) \subseteq R[X_{i, j}, \det X^{-1}]$. For each column of $\ul k$, say the $j$-th one, consider the minor determinant $M_{I, j}$ of the sub-matrix of $X =( X_{i, j} )$ obtained removing the first $l_j$ columns of $X$ and the rows corresponding to the entries in the $j$-th column of the tableau $I$; if $l_1 = m$, then we set $M_{I, j} = 1$ (empty determinant). Then we take $f_I = \det(X)^{-\mr{len}(\ul l)} \prod_{j} M_{I, j}$, where $\mr{len}(\ul l) \coloneqq \#\{j \mid l_j \neq 0\}$ is the \emph{length} of $\ul l$. Notice that $\mr{len}(\ul l) = k_1$.
We can define the map $\phi \colon \mb{S}^{\ul k}(\mr{st}) \to H^0(\ul k)$ sending $e_I$ to $f_I$. Using the Cauchy-Binet formula for minors of a given rank, one can check that, indeed, $f_I \in H^0(\ul k)$ and the map $\phi$ is $G$-linear. Since both $\mb{S}^{\ul k}(\mr{st})$ and $H^0(\ul k)$ are finitely generated and projective, if $\phi \otimes_R k(s)$ is an isomorphism for all $s$, then so is $\phi$. We now show that this is the case. Both $\mb{S}^{\ul{k}}(\mr{st}_s)$ and $H^0(\ul{k}, s)$ have a one-dimensional highest weight space of weight $\ul{k}$, which coincides with the space of $U_s$-invariants. For $H^0(\ul{k}, s)$ this is \cite[II.2.2(a)]{jantzen}. For $\mb{S}^{\ul{k}}(\mr{st}_s)$ a highest weight vector is given by $e_{I_0}$ and the action of a unipotent matrix $u \in U_s \subset G_s$ on $\mb{S}^{\ul{k}}(\mr{st}_s)$ can be described explicitly on the basis $\{e_I\}$; the resulting matrix $u_{\ul{k}}$ (ordering lexicographically the basis $\{e_I\}$) is unipotent upper-triangular. One can use this description of the action of $U_s$ on the elements $e_I$ to show that $(\mb{S}^{\ul{k}}(\mr{st}_s))^U$ is one dimensional. Therefore, both $\mb{S}^{\ul{k}}(\mr{st}_s)$ and $H^0(\ul{k}, s)$ have a simple socle containing the highest weight vector, see the proof of \cite[II.2.3]{jantzen}. Since $\phi \otimes k(s)$ maps a highest weight vector to a non-zero vector in $H^0(\ul{k}, s)$, $\phi \otimes k(s) \neq 0$ and it is an isomorphism between the socles. In particular, since the socle of $\mb{S}^{\ul{k}}(\mr{st}_s)$ is simple, $\ker (\phi \otimes k(s)) = 0$. To conclude, it is enough to prove that $\phi \otimes k(s)$ is surjective.
Since the construction of both $\mb{S}^{\ul k}(\mr{st})$ and $H^0(\ul k)$ is compatible with base change, we can assume without loss of generality that $R = \Z$. For $s = (0)$, we have an isomorphism because of the Borel-Bott-Weil theorem, see \ref{reducibility}. Thus $\mb{S}^{\ul k}(\mr{st})$ and $H^0(\ul k)$ have the same rank and therefore, being injective, $\phi \otimes k(s)$ is an isomorphism for all $s \in \spec R$.
\end{proof}

\begin{remark}
\label{reducibility}
The fibres $H^0(\ul{k}) \otimes_R k(s) \cong H^0(\ul{k}, s)$ are $\mr{GL}_{m, k(s)}$-modules of finite $k(s)$-dimension and the socle 
\[
    L(\ul{k}, s) = \mr{soc}\, H^0(\ul{k}, s)
\]
is a simple $\mr{GL}_{m, k(s)}$-module, \cite[II.2.3]{jantzen}. Moreover, varying $\ul{k} \in X(T)_+$ we obtain all simple $\mr{GL}_{m, k(s)}$-modules, \cite[II.2.7]{jantzen}. In fact, for $\mr{char}\, k(s) = 0$ we have, as a consequence of the \emph{Borel-Bott-Weil theorem}, that $L(\ul{k}, s) = H^0(\ul{k}, s)$, see \cite[II.5]{jantzen} and the discussion there. When $\mr{char}\, k(s) > 0$ this does not hold for general $m$ and $\ul{k}$: for $m = 2$, with $\mr{char}\, k(s) = p$ we can consider $\ul{k}=(p, 0)$ and $\mr{Sym}^p(\mr{st})$ is reducible, since it contains as a proper submodule $\mr{st}^{(p)}$, the $p$-twist of the standard representation, via $e_i \otimes 1 \mapsto e_i^p, \, i=1,2$.
\end{remark}
We call the $G$-module $H^0(\ul{k})$ the \emph{(dual) Weyl module} associated to the dominant weight $\ul{k}$. Let $\ul{k}, \ul{k}' \in X(T)_+$. Then $\ul{k} + \ul{k}' \in X(T)_+$ and we have a morphism of $G$-representations
\begin{equation}
    \label{morphrep}
    H^0(\ul{k}) \otimes_R H^0(\ul{k}') \longrightarrow H^0(\ul{k}+\ul{k}')
\end{equation}
defined by multiplication of global sections. This will allow us to define corresponding morphisms of twisted modules.

\begin{remark}[Representations of $\mr{GL}_1$]
\label{repgl1}
We excluded the commutative case $m=1$ of $\mr{GL}_1 = \mb{G}_m$ from our discussion, but this is easily dealt with. In the notations of \cite[I.2]{jantzen}, we have that $\mb{G}_m \cong \mr{Diag}(\Z)$, a special case of a torus $T \cong \mr{Diag}(\Z^m)$ that we implicitly used above, which implies that every algebraic representation $M$ of $\mb{G}_m$ decomposes as
\[
    M \cong \oplus_{n \in \Z} M_n
\]
the action of $\mb{G}_m$ on the simple module $M_n$ being given on points by $(z, m) \mapsto z^n \cdot m$, see \cite[I.2.11]{jantzen} for more details.
\end{remark}

\subsection{Twist of a sheaf by a representation}
\label{thetwist}
We will recall the construction of the twist, or contracted product, of a torsor by a representation, following \cite{sga1}.
\par Let $R$ be a Dedekind domain, $S \in \ul{\mr{Sch}}_R$, $\rho \in \ul{\mr{Re}}\mr{p}_R(\mr{GL}_m)$ and take $\mc{F}$ a locally free $\mc{O}_S$-sheaf of rank $m$ over $S$. We can consider the {contracted product}
\[
    F^{\rho} = \ul{\mr{Isom}}(\mc{O}_S^m, \mc{F}) \times^{\mr{GL}_m} \rho
\]
defined as the (fppf or Zariski) sheafification of 
\[
    T \longmapsto (\mr{Isom}(\mc{O}_T^m, \mc{F}_T) \times \rho(T))\big / \mr{GL}_m(T),
\]
where $T \in \ul{\mr{Sch}}_S$ and $\mr{GL}_m(T)$ acts on the right via $(x,y)\cdot g = (xg, g^{-1}y)$, for $x \in \mr{Isom}(\mc{O}_T^m, \mc{F}_T), y \in \rho(T), g \in \mr{GL}_m(T)$. This is represented by a vector bundle $F^\rho \to S$, see the discussion following \cite[XII.4.3]{sga1}. Taking the sheaf of sections of $F^\rho$, we obtain the sheaf $\mc{F}^\rho$, which we call the \emph{twist of $\mc{F}$ by $\rho$}.
\begin{example}
Consider $\rho = \mb{S}^{\ul{k}}(\mr{st})$, for some dominant weight $\ul{k}$ with $k_m \geq 0$. Then $\mc{F}^\mr{\rho} = \mb{S}^\ul{k}(\mc{F})$. To ease notations, we will denote this twisted sheaf by $\mc{F}^{\ul k}$.
\end{example}
The construction $\mc{F} \mapsto \mc{F}^\rho$, for a fixed $\rho$, is compatible with isomorphisms $\mc{F} \cong \mc{F}'$: for every isomorphism $\phi \colon \mc{F} \to \mc{F}'$ of locally free sheaves of finite rank $m$ over $S$, we can define a natural isomorphism $\phi^\rho \colon \mc{F}^\rho \to \mc{F}'^\rho$. Similarly, one can see that it is in fact compatible with the $p$-twist operation, when we work on $S$ of positive characteristic.
\par Moreover, the construction of the twist is also functorial in the representation $\rho$ and it gives rise to a {tensor functor}:
\[
    \rho \otimes \rho' \rightsquigarrow \mc{F}^{\rho} \otimes \mc{F}^{\rho'}.
\]
In particular, for $\ul{k}, \ul{k}' \in X(T)_+$ dominant weights, we have a natural morphism of sheaves
\[
    \mc{F}^{\ul{k}} \otimes \mc{F}^{\ul{k}'} \longrightarrow \mc{F}^{\ul{k}+\ul{k}'},
\]
coming from \ref{morphrep}.
We will often use this morphism implicitly in our constructions.
\begin{remark}
Notice that for $S=\spec{k(s)}$ a point $s \in \spec R$ of $\mr{char}\, k(s) = p$ we have that $L(\ul{k}, s) \subseteq H^0(\ul{k}, s)$, the inclusion being proper for most $\ul{k}$ (that is, $\ul{k}$ not in the {$p$-restricted alcove}, see \cite[II.5.7]{jantzen}). It seems to be a hard task, for general $m$, to characterise explicitly the Jordan--H\"older factors of $L(\ul{k}, s) \otimes L(\ul{k}', s)$. It would be interesting, working towards more general definitions of theta operators in positive characteristic, to describe the possible simple quotients $L(\ul{w}, s)$ of such tensor products, so as to obtain morphisms of sheaves of the form
\[
    \mc{F}^{L(\ul{k}, s)} \otimes \mc{F}^{L(\ul{k}', s)} \longrightarrow \mc{F}^{L(\ul{w}, s)},
\]
which would provide a wider range of weight shifting operators. Some descriptions amenable to explicit computations are available are $m=2, 3$, see \cite{dotyhenke}, \cite{bowmandoty}. In a similar way, one could consider maps $\mc{F}^{\ul k} \otimes \mc{F}^{\ul k'} \to \mc{F}^{\ul l}$, where $\mb{S}^{\ul l}$ is any representation obtained applying the Littlewood--Richardson rule to $\mb{S}^{\ul k} \otimes \mb{S}^{\ul k'}$.
This is a direction of research that we do not pursue here, but we plan to return on these matters in the future.
\end{remark}

\subsection{The automorphic sheaves}
Take now $R = \mc{O}_{E, (p)}$, a localisation of $\mc{O}_{E, \mr{ur}}$. In particular, we have that $\mathbf{G}_{1,R} \cong \mr{GL}_{n, R}$, thanks to \ref{Levidecomp}, and the Levi subgroup $H_{1, R}$ described there is isomorphic to $\mr{GL}_{n-1} \times \mr{GL}_1$. Let $S \in \ul{\mr{Sch}}_R$ and consider as above $\ul{A} = (A, \lambda, \iota, \eta_K) \in \mc{S}_K(S)$. We can associate to $\ul{A}$ the Hodge sheaves $\ul{\omega}_{A/S, \sigma}$ and $\ul{\omega}_{A/S, \overline{\sigma}}$, locally free of ranks $n-1$, $1$, respectively. If we take $\ul{k}$ a dominant weight for $\mr{GL}_{n-1}$ and $l \in \Z$ a weight for $\mr{GL}_1 \cong \mr{Diag}(\Z)$, see \ref{repgl1}, we can define the twisted sheaves
\[
    \ul{\omega}_{A/S}^{\ul{k}, l} \coloneqq \ul{\omega}_{A/S, \sigma}^{\ul{k}} \otimes \ul{\omega}_{A/S, \overline{\sigma}}^{\otimes l}.
\]
This would be a natural choice for a notion of automorphic sheaves relative to the moduli problem $\mc{S}_K$. But recall that we have the relation
\[
    \det \ul{\omega}_{A/S, \sigma} \cong \ul{\omega}_{A^\vee/S, \sigma} \otimes \delta_{A/S, \sigma} \underset{\lambda}{\cong} \ul{\omega}_{A/S, \overline{\sigma}} \otimes \delta_{A/S, \sigma}
\]
induced by the polarisation $\lambda$, from \ref{determinanteq}. In particular, using the polarisation $\lambda$, we can ``absorb'' the term $\ul{\omega}_{A/S, \overline{\sigma}}^{l}$ in $\ul{\omega}_{A/S, \sigma}^{\ul{k}}$, by multiplying by $\delta_{A/S, \sigma}^{-l}$. So we can alternatively consider, for $\ul{k}$ dominant as above and $w \in \Z$, the sheaves
\[
    \ul{\omega}_{A/S}^{\ul{k}, w} \coloneqq \ul{\omega}_{A/S, \sigma}^{\ul{k}} \otimes \delta_{A/S, {\sigma}}^{ w}.
\]
This convention will come with the advantage of allowing us to define Hecke-equivariant theta operators.
\begin{definition}[Automorphic sheaves]
Let $\ul{k}$ and $w$ be as above. We call $\ul{\omega}_{A/S}^{\ul{k}, w}$ the \emph{automorphic sheaf of weight $(\ul{k}, w)$ over $S$ associated to $\ul{A}$}.
\end{definition}
Since this construction is compatible with base change, we will work mainly with the automorphic sheaves of weights $(\ul{k}, w)$ associated to $\ul{A}$ in the universal class of $\mc{S}_K$, or some base change of it, for instance a $\mr{mod}\, p$ reduction $S_K$ of $\mc{S}_K$. We will denote these sheaves simply by $\ul{\omega}^{\ul{k}, w}$. For $R$ any $\mc{O}_{E, (p)}$-algebra, let $B = \spec R$ and consider $\mc{S}_{K, B}$ the base change of $\mc{S}_K$. Then we can consider the global sections of $\ul{\omega}^{\ul{k}, w}_B \coloneqq \ul{\omega}_{A/\mc{S}_{K,B}}^{\ul{k}, w}$.
\begin{definition}
We call $H^0(\mc{S}_{K, B}, \ul{\omega}^{\ul{k}, w}_B)$ the space of \emph{modular forms of weight $(\ul{k}, w)$ and level $K$ with coefficients in $R$}.
\end{definition}

\begin{remark}[Algebraic Koecher principle]
In the setting in which we are working, the results of \cite{highkoecher} apply, since $\dim(\mc{S}_K^\mr{min} \setminus \mc{S}_K) = n-1 \geq 2$, and we have, for a smooth, projective, toroidal compactification $\mc{S}_K^\mr{tor}$ and the minimal compactification $\mc{S}_K^\mr{min}$, that
\[
    H^0(\mc{S}_{K,B}, \ul{\omega}^{\ul{k}, w}_B) = H^0(\mc{S}^\mr{tor}_{K, B}, \ul{\omega}^{\mr{can}, \ul{k}, w}_B) = H^0(\mc{S}^\mr{min}_{K, B}, \ul{\omega}^{\mr{min}, \ul{k}, w}_B).
\]
This is called the \emph{Koecher principle}. This will allow us to carry out most of our constructions and computations on $\mc{S}_{K, B}$, while still being able to deduce interesting consequences that make use of the geometry of $\mc{S}_{K, B}^{\mr{tor}}$ and $\mc{S}_{K, B}^{\mr{min}}$ and the extensions of the bundles $\ul{\omega}^{\ul{k}, w}_B$.
\end{remark}

\section{Hecke operators}
\label{sectamehecke}
In this section we recall the definition of the algebra of tame Hecke operators via cohomological correspondences.
\par Let $f \colon X \to Y$ be a finite flat morphism of schemes (hence separated and quasi-compact). In particular, one can consider the {trace morphism} $\mr{tr} \colon f_\ast(\mc{O}_X) \to \mc{O}_Y$, a morphism of quasi-coherent $\mc{O}_Y$-modules. Using the trace we can define a duality morphism, denoted $\mr{Tr}$,
\[
    \mr{Tr} \colon f_\ast(\mc{O}_X) \longrightarrow f_\ast(\mc{O}_X)^\vee = \ul{\hom}_{\mc{O}_Y}(f_\ast(\mc{O}_X), \mc{O}_Y).
\]
One can see that $f_\ast(\mc{O}_X)^\vee$ has a natural structure of sheaf of $f_\ast(\mc{O}_X)$-modules and $\mr{Tr}$ is the morphism of $f_\ast(\mc{O}_X)$-modules defined by $1 \mapsto (\mr{tr} \colon f_\ast(\mc{O}_X) \to \mc{O}_Y)$. When $\mr{Tr} \colon f_\ast(\mc{O}_X) \longrightarrow f_\ast(\mc{O}_X)^\vee$ is an isomorphism we say that $f$ is \emph{separable}. 
\begin{lemma}
\label{lemsepet}
The finite flat morphism $f$ is separable if and only if it is {unramified}, that is, $\Omega^1_{X/Y}=0$. Therefore, $f$ is separable if and only if it is \'etale.
\end{lemma}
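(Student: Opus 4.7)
The plan is to reduce the statement to commutative algebra and then to the case of a fibre over a residue field, where the claim becomes classical. Since both conditions (``$\mr{Tr}$ is an isomorphism'' and ``$\Omega^1_{X/Y}=0$'') are local on $Y$, and $f$ is finite, hence affine, I would first pass to the affine situation $Y = \spec A$, $X = \spec B$, with $B$ a finite locally free $A$-algebra. The duality map becomes the $A$-linear map $\mr{Tr} \colon B \to \hom_A(B, A)$, $b \mapsto (x \mapsto \mr{tr}(bx))$, between two locally free $A$-modules of the same rank.

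Next, I would check both conditions fibrewise over $\spec A$. The map $\mr{Tr}$ is an isomorphism iff its determinant, the discriminant of $B/A$, is a unit, which can be detected pointwise; dually, $\Omega^1_{B/A}$ is a finitely generated $A$-module, so its vanishing can be tested after base change to residue fields $k(\mathfrak{p})$ via Nakayama's lemma. Both constructions commute with the base change $A \to k(\mathfrak{p})$. Thus the equivalence reduces to the following field-case claim: for a finite-dimensional commutative algebra $\bar B$ over a field $k$, the trace pairing $\bar B \times \bar B \to k$ is non-degenerate if and only if $\Omega^1_{\bar B/k} = 0$.

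For this last step I would invoke the classical structure theory. The condition $\Omega^1_{\bar B/k} = 0$ is equivalent to $\bar B$ being \'etale over $k$, i.e., $\bar B \cong \prod_i L_i$ with each $L_i/k$ a finite separable field extension. In that case the trace form on $\bar B$ decomposes as the orthogonal sum of the trace forms of the $L_i$, each of which is non-degenerate by the standard separability criterion for field extensions. Conversely, if $\bar B$ has a non-zero nilpotent ideal $\mathfrak n$, then for $x \in \mathfrak n$ the multiplication-by-$x$ map is a nilpotent endomorphism of $\bar B$, so $\mr{tr}(xy) = 0$ for all $y$ and $\mathfrak n$ sits in the radical of the pairing; and if $\bar B$ is reduced but some factor $L_i/k$ is inseparable, then $\mr{char}\, k = p$ divides the inseparable degree $[L_i:k]_i$ and $\mr{tr}_{L_i/k}$ vanishes identically, again degenerating the pairing.

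The main obstacle is uniformly handling this field-case analysis across the non-reduced and inseparable possibilities; once that is in hand, the final assertion of the lemma is immediate, since $f$ is assumed finite and flat, so ``$f$ is unramified'' is exactly ``$f$ is \'etale''.
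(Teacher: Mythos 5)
Your argument is correct, and it is genuinely different from the paper's. The paper reduces to the affine case and then cites Ford's \emph{Separable Algebras} (Thm.~4.6.7--8, 8.3.6), treating the equivalence as a black box; you instead give a self-contained commutative-algebra proof. Your route --- reduce to $B/A$ finite locally free, observe that both conditions commute with base change to residue fields (isomorphy of $\mr{Tr}$ via non-vanishing of the discriminant, vanishing of $\Omega^1_{B/A}$ via Nakayama), then settle the Artinian-algebra-over-a-field case by the standard dichotomy (nilpotents kill the trace form; purely inseparable subextensions kill it too; a product of separable extensions has non-degenerate trace) --- is sound and elementary. Two very small points of hygiene: $\Omega^1_{B/A}$ is a finite $B$-module, but since $B$ is finite over $A$ it is also a finite $A$-module, which is what Nakayama needs, so that step is fine once said; and in the nilpotent case it is $xy$ (not just $x$) whose multiplication is nilpotent that forces $\mr{tr}(xy)=0$, but since $\mathfrak n$ is an ideal this is immediate and your intent is clear. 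Your approach has the virtue of being self-contained and of exposing the role of the discriminant; the paper's is shorter but outsources the content. Both are valid.
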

\begin{proof}
Since $f$ is of finite presentation, being unramified is the same as being formally unramified. Moreover, the statement is local in nature, so we can assume that $Y = \spec A, X = \spec B$, with $A \to B$ a finite flat morphism. This is then \cite[Thm.~4.6.7-8, 8.3.6]{fordsepalg}.
\end{proof}
Consider $\mc{G}$ a quasi-coherent $\mc{O}_Y$-module over $Y$. Then we can endow
\[
    f_\ast(\mc{O}_X)^\vee \otimes_{\mc{O}_Y} \mc{G} = \ul{\hom}_{\mc{O}_Y}(f_\ast(\mc{O}_X), \mc{G})
\]
with a structure of quasi-coherent $f_\ast(\mc{O}_X)$-module. Since $f$ is affine, any quasi-coherent $f_\ast(\mc{O}_X)$-module is $f_\ast$ of some quasi-coherent $\mc{O}_X$-module, in a natural, functorial way. We write $f^! \mc{G}$ for the $\mc{O}_X$-sheaf corresponding to $f_\ast(\mc{O}_X)^\vee \otimes_{\mc{O}_Y} \mc{G}$. The trace map $\mr{Tr}$ induces a morphism
\[
    \mr{Tr}_\mc{G} \colon f_\ast f^\ast \mc{G} = f_\ast \mc{O}_X \otimes_{\mc{O}_Y} \mc{G} \longrightarrow f_\ast(\mc{O}_X)^\vee \otimes_{\mc{O}_Y} \mc{G} = f_\ast f^! \mc{G},
\]
hence, by the same correspondence, a morphism $\mr{Tr}_\mc{G} \colon f^\ast \mc{G} \longrightarrow f^!\mc{G}$.
By \ref{lemsepet}, this map is an isomorphism when $f$ is finite \'etale.
The functor $f^!\colon \ul{\mr{QCoh}}_Y \to \ul{\mr{QCoh}}_X$ is right adjoint to $f_\ast$, that is, we have a natural functorial isomorphism
\[
    \hom_{\mc{O}_Y}(f_\ast \mc{F}, \mc{G}) \cong \hom_{\mc{O}_X}(\mc{F}, f^!\mc{G}), \quad \forall \mc{F} \in \ul{\mr{QCoh}}_X, \mc{G} \in \ul{\mr{QCoh}}_Y.
\]
From this adjunction we obtain the natural counit morphism $f_\ast f^! \mc{G} \to \mc{G}$. The composition
\[
    \mr{tr}_\mc{G} \colon f_\ast f^\ast \mc{G} \overset{\mr{Tr}_\mc{G}}{\longrightarrow} {f_\ast f^!\mc{G}} \longrightarrow \mc{G}
\]
is also sometimes called trace morphism. When $\mc{G} = \mc{O}_Y$, this map is the same trace morphism $f_\ast \mc{O}_X \to \mc{O}_Y$ we began with. Notice that we have a morphism of $\mc{O}_Y$-modules 
$\mc{G} \longrightarrow f_\ast f^\ast \mc{G}$ and, since $f$ is faithfully flat, it is injective.
\par Let $X, Y$ be two schemes over a base scheme $S$ and $\mc{F}, \mc{G}$ two quasi-coherent modules, over $X$ and $Y$ respectively.
\begin{definition}
A \emph{finite flat correspondence} between $(X, \mc{F})$ and $(Y, \mc{G})$ is the datum of $(Z, p_1, p_2, \phi)$ a third scheme over $S$ with finite flat morphisms $p_1\colon Z \to X$ and $p_2 \colon Z \to Y$ of $S$-schemes, together with a morphism
\[
    \phi \colon p_2^\ast \mc{G} \longrightarrow p_1^\ast \mc{F}
\]
of quasi-coherent sheaves over $Z$.
\end{definition}
Notice that being $p_1, p_2$ affine, their pushforwards $p_{1,\ast}, p_{2,\ast}$ are exact (on quasi-coherent sheaves). In particular we have
\[
    H^\bullet(X, p_{1,\ast} p_1^\ast\mc{F}) = H^\bullet(Z, p_1^\ast\mc{F}) \text{ and } H^\bullet(Y, p_{2,\ast} p_2^\ast\mc{G}) = H^\bullet(Z, p_2^\ast\mc{G}),
\]
so that we can consider 
\begin{align*}
    T_{\phi} = T^i_{\phi} \colon H^i(Y, \mc{G}) \overset{p_2^\ast}{\longrightarrow} H^i(Z, p_2^\ast\mc{G}) \overset{H^i(\phi)}{\longrightarrow} H^i(Z, p_1^\ast \mc{F}) \overset{\mr{tr}_\mc{G}}{\longrightarrow} H^i(X, \mc{F}), \quad i \geq 0.
\end{align*}
Therefore, from a cohomological correspondence $Z$ we deduce a family of morphisms $T_\phi^i$ in cohomology. If $S = \spec R$, then these morphisms are $R$-linear between $R$-modules.
\par Write $\mathbf{G}^{p, \infty} = \mathbf{G}(\mb{A}^{p, \infty})$, consider $g \in \mathbf{G}^{p, \infty}$ and take $K_g = K \cap gKg^{-1}$. In particular, $K_{g,p} = K_p = \mathbf{G}(\Z_p)$ and since we assumed that $K$ is neat, so is $K_g$. Consider also $g^{-1} K_g g = K \cap g^{-1} K g$. Then we have two finite \'etale maps
\[
    p_1 \colon \mc{S}_{K_g} \longrightarrow \mc{S}_K, \quad p_2 \colon \mc{S}_{K_g} \longrightarrow \mc{S}_K.
\]
We sketch their definitions using the language and notations of \ref{lvlstr}. The first one is defined on points $\ul{A} \in \mc{S}_{K_g}$ by sending
\[
    \ul{A} = (A, \iota, \lambda, \eta_{K_g}) \longmapsto (A, \iota, \lambda, \eta_{K})
\]
where $\eta_K$ is the level $K$-structure obtained from $\eta_{K_g}$ taking the $K$-orbit of any $\eta$ in $\eta_K$. In particular, we see that $p_1$ has degree given by the cardinality of the finite group $K^p/K^p_g$. On the other hand, $p_2$ is obtained as the composition of the finite, \'etale, surjective morphism $\mc{S}_{g^{-1}K_g g} \longrightarrow \mc{S}_K$ defined like $p_1$ with the map
\begin{align*}
    [g]\colon \mc{S}_{K_g} &\longrightarrow \mc{S}_{g^{-1}K_g g},\\
    (A, \iota, \lambda, \eta_{K_g}) &\longmapsto (A', \iota', \lambda', \eta'_{g^{-1}K_g g})
\end{align*}
where $A'$ is the unique (up to isomorphism) abelian scheme prime-to-$p$ quasi-isogenous to $A$ via $f \colon A \to A'$ such that $V^p(f_{s})^{-1}(T^p A'_{s}) = \eta(g(\Lambda \otimes_{\Z} \hat{\Z}^p))$, ${s}$ any geometric point in $S$, and $\iota', \lambda', \eta'_{g^{-1}K_g g}$ defined as in \cite[Prop.~1.4.3.4]{kwl}. See also \cite[Cor.~1.3.5.4]{kwl}. Then, $[g]$ is clearly an isomorphism with inverse $[g^{-1}]$. Notice that for $g \in K^p$ both $p_1$ and $p_2$ are the identity and for any $g' \in K^p g K^p$ the morphisms $p_{1, g'}, p_{2, g'}$, obtained through the same construction using $g'$ instead of $g$, are the same as $p_{1, g}$ and $p_{2, g}$, respectively. In particular, both $p_1$ and $p_2$ really only depend on the double coset $K^p g K^p$.
\par For every automorphic weight $(\ul{k}, w)$ we have the natural identifications given by base change
\[
    p_1^\ast \ul{\omega}_{A/\mc{S}_K}^{\ul{k}, w} \cong \ul{\omega}_{A/\mc{S}_{K_g}}^{\ul{k}, w} \cong p_2^\ast \ul{\omega}_{A/\mc{S}_K}^{\ul{k}, w}.
\]
More generally, suppose that we have a family $\{\mc{F}_{K'}\}_{K'}$, where the ${K'} \subseteq K$ are neat $p$-hyperspecial compact opens, and the $\mc{F}_{K'}$ are quasi-coherent sheaves on $\mc{S}_{{K'}}$ compatible with pullback via the morphisms
\begin{align*}
    \mc{S}_{K'} &\longrightarrow \mc{S}_{K''}, \quad K' \subseteq K'' \subseteq K, \\
    [g] \colon \mc{S}_{K'} &\longrightarrow \mc{S}_{g^{-1}K_g' g}, \quad g \in \mathbf{G}^{p, \infty}.
\end{align*}
In particular, we obtain a natural cohomological correspondence $(\mc{S}_{K_g}, p_1, p_2, \phi)$ and, as above, $\mc{O}_{E, (p)}$-linear operators
\[
    T_g = T_g^i \colon H^i(\mc{S}_K, \mc{F}_K) \longrightarrow H^i(\mc{S}_K, \mc{F}_K).
\]
We call $T_g$ the {(tame) Hecke operator} given by $g \in \mathbf{G}^{p, \infty}$. We will use the action of these operators over more general bases. First of all, let $R \in \ul{\mr{Al}}\mr{g}_{\mc{O}_{E,(p)}}$. Then we can pull the Hecke correspondence back to $B = \spec R$ and obtain $R$-linear operators 
\[
    T_g \colon H^i(\mc{S}_{K, B}, \mc{F}_K) \longrightarrow H^i(\mc{S}_{K, B}, \mc{F}_K).
\]
Furthermore, suppose that we have a family of locally closed (resp.\ closed, resp.\ open) subschemes $\{\tilde{\mc{S}}_{K', B} \to \mc{S}_{K', B}\}_{K'}$, compatible in the sense that the diagrams
\[
    \begin{tikzcd}
        & \tilde{\mc{S}}_{K', B} \arrow[r] \arrow[d] 
        \arrow[dr, phantom, "\usebox\pullback" , very near start, color=black] & \mc{S}_{K', B} \arrow[d, "p_{K'/K''}"] \\
        & \tilde{\mc{S}}_{K'', B} \arrow[r] & \mc{S}_{K'', B}
    \end{tikzcd}
\]
for $K' \subseteq K'' \subseteq K$ are all cartesian. Then, we can again pull the Hecke correspondence back to $\tilde{\mc{S}}_{\cdot, B}$ and obtain Hecke operators
\[
    T_g \colon H^i(\tilde{\mc{S}}_{K, B}, \mc{F}_K) \longrightarrow H^i(\tilde{\mc{S}}_{K, B}, \mc{F}_K).
\]
This will allow us in particular to talk about Hecke operators acting on non-maximal Ekedahl-Oort strata. Notice that the action of $T_g$ makes sense for compatible families of quasi-coherent sheaves $\{\mc{F}_{K'}\}_{K'}$ defined on the subschemes $\{\tilde{\mc{S}}_{K', B}\}_{K'}$, even when these are not defined as restrictions of sheaves on the tower $\{\mc{S}_{K', B}\}_{K'}$.
\begin{remark}[Unramified Hecke Algebra]
From the observations made above, we see that we have in particular defined an $R$-linear action on $H^\bullet(\tilde{\mc{S}}_{K, B}, \mc{F}_K)$ of the \emph{unramified Hecke algebra}. This algebra can be defined as the the restricted tensor product (taken with respect to the identity elements) of the local Hecke algebras at unramified primes
\[
    \mc{H}_\mr{ur}(\mathbf{G}, K, R) = {\bigotimes}'_{\ell\,\centernot{\mid}\, p D_{E/\Q}} \mc{H}(\mathbf{G}_\ell, K_\ell, R),
\]
where $\mathbf{G}_\ell = \mathbf{G}(\Q_\ell)$. The local Hecke algebras $\mc{H}(\mathbf{G}_\ell, K_\ell, R)$ themselves are defined as the convolution algebras of locally constant and compactly supported functions $f\colon K_\ell \backslash \mathbf{G}_\ell \slash K_\ell \to R$. In particular, for all but finitely many primes $\ell$ we have that $K_\ell = \mathbf{G}(\Z_\ell)$. If, moreover, $\mathbf{G}_{\Q_\ell}$ is quasi-split, then we call $\ell$ a \emph{good prime}. All but finitely many primes are good. The local Hecke algebras at good primes $\ell$ are commutative and, in fact, can be described, up to some additional hypothesis on $R$, by the \emph{Satake isomorphism}, see \cite{grosssatake}. The action of the local Hecke algebras allows one to define \emph{Satake parameters} which, in turn, control the Galois representation (in general, conjecturally) associated to a Hecke eigenform, see, for instance, \cite[2.3.1]{EFGMM} and \cite{skinner}.
\end{remark}
\begin{remark}[Hecke operators at \texorpdfstring{$p$}{p}]
As shown in \cite[7]{fakpil}, one can define an action of certain Hecke operators defined \emph{at $p$}, for some $g \in \mathbf{G}(\Q_p)$. We will not discuss these operators, but we remark that they are still constructed from cohomological correspondences, even though the maps $p_1, p_2$, notation as above, involved are finite flat and \emph{not \'etale}, so that, in view of \ref{lemsepet}, one has to take the ramification they introduce into account and normalise the action accordingly.
\end{remark}

\section{Gauss-Manin connection \& Kodaira-Spencer map}
In this section we recall how the Gauss-Manin connection and the Kodaira-Spencer map are defined and how they are related. We then proceed to describe some of their basic properties. The classical reference for the definition of the GM connection is \cite{katzoda}. We will follow \emph{loc.\ cit.\ }closely, but also benefit from the exposition given in the first chapter of \cite{katzdiff} and in \cite{katznilpotent}. For the more context-specific, or arithmetic, properties of the KS morphism we will follow \cite[2.3.5]{kwl} and provide other references where we deemed it necessary.
\subsection{Generalities on filtrations}
\label{secgenonfilt}
We record here general, basic facts concerning filtrations. These will be essential in our construction of generalised theta operators.
\subsubsection{Tensor product of filtrations}
Let $S$ be a scheme and consider $(\mc{F}, F^\bullet(\mc{F})), (\mc{G}, F^\bullet(\mc{G}))$ finite locally free sheaves over $S$ endowed with finite, separated, exhaustive, descending filtrations
\begin{align*}
    \mc{F} &= F^0(\mc{F}) \supseteq F^1(\mc{F}) \supseteq \cdots \supseteq F^r(\mc{F}) \supseteq F^{r+1}(\mc{F}) = 0, \\
    \mc{G} &= F^0(\mc{G}) \supseteq F^1(\mc{G}) \supseteq \cdots \supseteq F^s(\mc{G}) \supseteq F^{s+1}(\mc{G}) = 0,
\end{align*}
by finite locally free subsheaves $F^i(\mc{F}), F^j(\mc{G})$, such that the graded pieces $\mr{gr}^i \mc{F}, \mr{gr}^j \mc{G}$ are themselves finite locally free. Then we can defined a \emph{tensor product filtration} on $\mc{F} \otimes_{\mc{O}_S} \mc{G}$ with the same properties as the two filtrations on $\mc{F}$ and $\mc{G}$ by setting:
\[
    F^k(\mc{F} \otimes \mc{G}) \coloneqq \sum_{i+j = k} F^i(\mc{F}) \otimes F^j(\mc{G}).
\]
From the hypotheses on $(\mc{F}, F^\bullet(\mc{F})), (\mc{G}, F^\bullet(\mc{G}))$ we can deduce the following.
\begin{lemma}[Grothendieck Group Decomposition]
\label{lemgrotgrpdecomp}
The graded pieces of $F^\bullet(\mc{F} \otimes \mc{G})$ can be described as:
\[
    \mr{gr}^k(\mc{F} \otimes \mc{G}) \cong \bigoplus_{i+j = k} \mr{gr}^i(\mc{F}) \otimes \mr{gr}^j(\mc{G}).
\]
\end{lemma}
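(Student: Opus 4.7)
The plan is to construct a natural surjective morphism
\[
    \phi^k \colon \bigoplus_{i+j=k} \mr{gr}^i(\mc{F}) \otimes \mr{gr}^j(\mc{G}) \longrightarrow \mr{gr}^k(\mc{F} \otimes \mc{G})
\]
directly from the universal properties of tensor products and filtrations, and then check it is an isomorphism by reducing to the local situation where both filtrations split. First I would observe that, for each pair $(i,j)$ with $i+j=k$, the inclusion $F^i(\mc{F}) \otimes F^j(\mc{G}) \hookrightarrow F^k(\mc{F} \otimes \mc{G})$ sends both $F^{i+1}(\mc{F}) \otimes F^j(\mc{G})$ and $F^i(\mc{F}) \otimes F^{j+1}(\mc{G})$ into $F^{k+1}(\mc{F} \otimes \mc{G})$, so it descends to a well-defined map $\mr{gr}^i(\mc{F}) \otimes \mr{gr}^j(\mc{G}) \to \mr{gr}^k(\mc{F} \otimes \mc{G})$. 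Summing over $i+j=k$ yields $\phi^k$. Surjectivity is immediate from the definition of $F^k(\mc{F} \otimes \mc{G})$ as the sum of the images of $F^i(\mc{F}) \otimes F^j(\mc{G})$ for $i+j = k$.

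The remaining task is injectivity, for which I would work locally. Since $F^{i}(\mc{F})$ and $\mr{gr}^i(\mc{F}) = F^i(\mc{F}) / F^{i+1}(\mc{F})$ are both finite locally free, the short exact sequence
\[
    0 \longrightarrow F^{i+1}(\mc{F}) \longrightarrow F^i(\mc{F}) \longrightarrow \mr{gr}^i(\mc{F}) \longrightarrow 0
\]
splits Zariski-locally on $S$. Iterating, over a sufficiently small open $U \subseteq S$ we obtain an isomorphism $\mc{F}|_U \cong \bigoplus_i \mr{gr}^i(\mc{F})|_U$ carrying the filtration $F^\bullet(\mc{F})$ to the na\"ive decreasing filtration by $\bigoplus_{i' \geq i} \mr{gr}^{i'}(\mc{F})|_U$, and similarly for $\mc{G}$. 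Over such a $U$, a direct computation gives
\[
    (\mc{F} \otimes \mc{G})|_U \cong \bigoplus_{i,j} \mr{gr}^i(\mc{F})|_U \otimes \mr{gr}^j(\mc{G})|_U,
\]
with $F^k(\mc{F} \otimes \mc{G})|_U$ corresponding to the summands with $i+j \geq k$, and the map $\phi^k|_U$ is visibly the identity on the summands with $i+j=k$. Hence $\phi^k$ is an isomorphism locally, and, being a globally defined morphism, is an isomorphism on $S$.

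The main (minor) obstacle is verifying that the locally constructed splittings are not needed to be chosen coherently: one only uses them to certify that $\phi^k$, which is already defined globally, is an isomorphism stalk-by-stalk. Alternatively, one could proceed by induction on $r + s$, tensoring the exact sequence $0 \to F^1(\mc{F}) \to \mc{F} \to \mr{gr}^0(\mc{F}) \to 0$ with $\mc{G}$ (which remains exact by flatness of $\mc{G}$) and applying the snake lemma together with the inductive hypothesis; this avoids invoking local splittings explicitly, at the cost of a slightly more bookkeeping-heavy argument.
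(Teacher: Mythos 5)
The paper does not actually prove this lemma; it is asserted as an immediate consequence of the hypotheses. Your argument---defining the natural comparison map $\phi^k$ globally, observing surjectivity directly from the definition of the product filtration, and verifying injectivity after Zariski-local splitting of both filtrations (valid because all the $F^i$ and $\mr{gr}^i$ are assumed finite locally free)---is correct and is the standard proof; the point you flag, that the local splittings need not be chosen coherently because $\phi^k$ is already globally defined, is exactly the right observation.
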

Notice that if we have a finite family of finite locally free sheaves $(\mc{F}_i, F^\bullet(\mc{F}_{i}))_{1 \leq i \leq m}$ with filtrations subject to the same conditions as above, we can define by induction a filtration on their tensor product, which turns out to be
\[
    F^k\left(\bigotimes_{i=1}^m \mc{F}_i\right) \cong \bigoplus_{\lvert\ul{j}\rvert = k} F^{j_1}(\mc{F}_{1}) \otimes F^{j_2}(\mc{F}_{2}) \otimes \cdots \otimes F^{j_m}(\mc{F}_{m}),
\]
where $\ul{j} = (j_1, \dots, j_m) \in \Z^m$ and $\lvert \ul{j} \rvert = \sum_i j_i$. Its graded pieces are
\[
    \mr{gr}^k\left(\bigotimes_{i=1}^m \mc{F}_i\right) \cong \bigoplus_{ \lvert\ul{j}\rvert = k} \mr{gr}^{j_1}\mc{F}_{1} \otimes \mr{gr}^{j_2}\mc{F}_{2} \otimes \cdots \otimes \mr{gr}^{j_m}\mc{F}_{m}.
\]

\subsubsection{Dual filtration}
Let $S$ and $(\mc{F}, F^\bullet(\mc{F}))$ be as above. We can define a decreasing filtration on $\mc{F}^\vee$ by setting
\[
    F^i(\mc{F}^\vee) \coloneqq \left(\mc{F}/F^{r-i+1}(\mc{F})\right)^\vee.
\]
Its graded pieces are
\[
    \mr{gr}^i(\mc{F}^\vee) \cong F^i(\mc{F}^\vee) / F^{i+1}(\mc{F}^\vee) \cong \frac{(\mc{F}/F^{r-i+1}(\mc{F}))^\vee}{(\mc{F}/F^{r-i}(\mc{F}))^\vee} \cong \mr{gr}^{r-i}(\mc{F})^\vee.
\]

\subsubsection{Filtration of the exterior algebra}
\label{seckoszulfiltext}
Let $S$ be as above and consider a ses of finite locally free sheaves over $S$ of constant rank
\[
    0 \longrightarrow \mc{F}' \longrightarrow \mc{F} \longrightarrow \mc{F}'' \longrightarrow 0.
\]
The \emph{Koszul filtration} of $\wedge^j \mc{F}$, for some $0 \leq j \leq \mr{rk}(\mc{F})$, is defined by
\[
    K^i(\wedge^j \mc{F}) \coloneqq \im(\wedge^i \mc{F}' \otimes_{\mc{O}_S} \wedge^{j - i} \mc{F} \longrightarrow \wedge^j \mc{F}),\quad 0 \leq i \leq j.
\]
This filtration can also obtained by taking quotients from the tensor product filtration on $\mc{F}^{\otimes j}$. We can extend this to an exhaustive, separated filtration by setting $K^i(\wedge^j \mc{F}) = \wedge^j \mc{F}$ for $i<0$ and $K^i(\wedge^j \mc{F}) = 0$ for $i>j$. In that case, $(K^i(\wedge^\bullet \mc{F}))_{i \in \Z}$ is an exhaustive, separated filtration of $\wedge^\bullet \mc{F}$.
In particular, we have $K^0(\wedge^\bullet \mc{F}) = \wedge^\bullet \mc{F}$ and $K^i(\wedge^i \mc{F}) = \wedge^i \mc{F}'$, which could be $0$, if $i > \mr{rk}(\mc{F}')$. The filtration is compatible with the algebra structure given by the $\wedge$ product in the sense that $K^i \wedge K^{i'} \subseteq K^{i+i'}(\wedge^\bullet \mc{F})$.
The graded terms of the filtration are
\[
    \mr{gr}^i(\wedge^j \mc{F}) = K^i/K^{i+1} \cong \wedge^{i} \mc{F}' \otimes_{\mc{O}_S} \wedge^{j-i} \mc{F}'', \quad 0 \leq i \leq j \leq \mr{rk}(\mc{F}).
\]
In particular, when $j=\mr{rk}(\mc{F})$ and $i=\mr{rk}(\mc{F}')$ we get the usual isomorphism of invertible sheaves
\[
    \det \mc{F} \cong \det \mc{F}' \otimes \det \mc{F}''.
\]
\subsubsection{Filtration of the symmetric algebra}
\label{secfiltsym}
Let $S$ be as above. Let $\mc{G}$ be a finite locally free sheaf. In what follows, for $j<0$, we set
\[
    \mr{Sym}^j(\mc{G}) \coloneqq \left( \mr{Sym}^{-j}(\mc{G}) \right)^\vee
\]
and whenever $\mr{Sym}^{-j}(\mc{G})$ is endowed with a filtration, $\mr{Sym}^j(\mc{G})$ is endowed with the dual filtration.
\par One can consider $\mc{F}$ a finite locally free sheaf over $S$, filtered by finite locally free subsheaves with finite locally free quotients
\[
    \mc{F} = F^0(\mc{F}) \supseteq F^1(\mc{F}) \supseteq \cdots \supseteq F^r(\mc{F}) \supseteq F^{r+1}(\mc{F}) = 0,
\]
and thus, by taking the quotient of the filtration on $\mc{F}^{\otimes j}$, we get a filtration on $\mr{Sym}^j \mc{F}$. One can give an explicit description of this filtration: $F^k\left(\mr{Sym}^j \mc{F}\right)$ is the image of the morphism
\[
     \sum_{\lambda \in \Lambda(k)} \mr{Sym}^{e_0(\lambda)} F^0(\mc{F}) \otimes \mr{Sym}^{e_1(\lambda)} F^1(\mc{F}) \otimes \cdots \otimes \mr{Sym}^{e_r(\lambda)} F^r(\mc{F}) \to \mr{Sym}^j \mc{F},
\]
where $\Lambda(k)$ is the set of monomials $\lambda = X_0^{e_0}X_1^{e_1} \cdots X_r^{e_r}$ of degree $j$ such that $\sum_{1 \leq i \leq r} i e_i = k$. In particular, we have
\[
    F^0\left(\mr{Sym}^j \mc{F}\right) = \mr{Sym}^j \mc{F}, \quad F^{jr}\left(\mr{Sym}^j \mc{F}\right) = \mr{Sym}^{j} F^r(\mc{F}).
\]
The graded term $\mr{gr}^k\left(\mr{Sym}^j \mc{F}\right)$ is naturally isomorphic to
\[
     \bigoplus_{\lambda \in \Lambda(k)} \mr{Sym}^{e_0(\lambda)}(\mr{gr}^0(\mc{F})) \otimes \mr{Sym}^{e_1(\lambda)}(\mr{gr}^1(\mc{F})) \otimes \cdots \otimes \mr{Sym}^{e_r(\lambda)}(\mr{gr}^r(\mc{F})).
\]
\begin{remark}
\label{omegatoHfiltration}
We will need, for instance, to consider a filtration on $\mb{S}^{\ul{k}}(H_\sigma)$,
where $\ul{k} = (k_1 \geq k_2 \geq \cdots \geq k_{n-1}) \in \Z^{n-1}_{\geq 0}$ and $H = H^1_\mr{dR}(A/S)$, for $A/S$ an abelian scheme with $\Oe$-action. To construct such filtration, first, we consider the Koszul filtrations $K^\bullet(\wedge^j H_\sigma)$, $j = 1, \dots, n-1$, induced by the Hodge filtration \ref{hfiltsigma2}. Taking tensor products we obtain a filtration on $\otimes_j \wedge^{l_j}(H_\sigma)$, $\ul l$ conjugate to $\ul k$. Similarly we construct filtrations on $H_\sigma^{\lvert \ul k \rvert}$ and $\otimes_i \mr{Sym}^{k_i}(H_\sigma)$. Finally we notice that the construction of $\mb{S}^{\ul k}(H_\sigma)$ is compatible with these filtrations.\\
When constructing generalised theta operators we will consider more complicated filtrations, but they will be built following the same ideas.
\end{remark}
\subsection{Generalities on connections}
We fix for the rest of this section the following data, a special case of \cite[1]{katzdiff}:
\[
    X \overset{f}{\longrightarrow} S \overset{g}{\longrightarrow} B
\]
where $f$ is smooth and proper and $g$ is smooth. In particular, the sheaves $H_\mr{dR}^i(X/S)$ are finite locally free. We will mainly be interested in the case $X = A$ where $\ul{A}$ is the universal object over $S=\mc{S}_{K, B}$, for some $B \in \ul{\mr{Sch}}_{\mc{O}_{E,(p)}}$.
\begin{definition}[Connection]
Let $\mc{F}$ be a quasi-coherent sheaf on $S$. A \emph{$B$-linear connection} on $\mc{F}$ is a morphism
\[
    \nabla \colon \mc{F} \longrightarrow \mc{F} \otimes_{\mc{O}_S} \Omega^1_{S/B}
\]
of abelian sheaves satisfying the \emph{Leibniz rule}
\[
    \nabla(h s) = h\nabla(s) + s \otimes d_{S/B}(h),
\]
where $h$ and $s$ are sections of $\mc{O}_S$ and $\mc{F}$, respectively, over some open $U \subset S$ and $d_{S/B} \colon \mc{O}_S \to \Omega^1_{S/B}$ is the exterior differential.
\end{definition}
We can derive from $\nabla$, for $(\mc{F}, \nabla)$ a sheaf with connection, a family of morphisms of abelian sheaves
\[
    \nabla_i \colon \mc{F} \otimes \Omega^i_{S/B} \longrightarrow \mc{F} \otimes \Omega^{i+1}_{S/B}
\]
defining it on sections $s$ and $\omega$ of $\mc{F}$ and $\Omega^i_{S/B}$, respectively, over some open $U \subset S$, via a generalised Leibniz rule
\[
    s \otimes \omega \longmapsto \nabla(s) \wedge \omega + s \otimes d\omega
\]
and then extending by linearity. The \emph{curvature} of the connection $\nabla$ is defined as $K(\nabla) \coloneqq \nabla_1 \circ \nabla \colon \mc{F} \to \mc{F} \otimes \Omega^2_{S/B}$, which is an $\mc{O}_S$-linear morphism. A connection is said to be \emph{integrable} if $K(\nabla) = 0$.
\par Given two quasi-coherent sheaves with connections $(\mc{F}, \nabla_{\mc{F}}), (\mc{G}, \nabla_{\mc{G}})$ we can use the Leibniz rule to define the tensor product connection $(\mc{F} \otimes \mc{G}, \nabla_{\mc{F} \otimes \mc{G}})$, obtained by setting on local sections
\begin{align*}
    \nabla_{\mc{F} \otimes \mc{G}} \colon \mc{F} \otimes_{\mc{O}_S} \mc{G} &\longrightarrow \mc{F} \otimes \mc{G} \otimes \Omega^1_{S/B}, \\
    f \otimes g & \longmapsto \nabla(f) \otimes g + f \otimes \nabla(g)
\end{align*}
and then extending by linearity, where, of course, we are identifying $\mc{F} \otimes \Omega^1 \otimes \mc{G} \cong \mc{F} \otimes \mc{G} \otimes \Omega^1$. In particular, from $(\mc{F}, \nabla)$ we can define a natural connection $\nabla \colon \mc{F}^{\otimes j} \to \mc{F}^{\otimes j} \otimes \Omega^1_{S/B}$, for $j \geq 0$, still denoted by $\nabla$. From this, taking quotients, we get well-defined connections on  $\wedge^j \mc{F}$ and $\mr{Sym}^j \mc{F}$. For $s_1, s_2, \dots, s_j \in \mc{F}(U)$, over $U \subset S$ some open, we have
\begin{align*}
    \nabla \colon s_1 \wedge s_2 \wedge \cdots \wedge s_j &\longmapsto \sum_{k=1}^j s_1 \wedge \cdots \wedge \nabla(s_k) \wedge \cdots \wedge s_j,\\
    \nabla \colon s_1 s_2 \cdots s_j &\longmapsto \sum_{k=1}^j s_1 \cdots \nabla(s_k) \cdots s_j.
\end{align*}
Consider in particular the case where $\mc{F}$ is finite locally free and it admits a one step-filtration by a finite locally free subsheaf $\mc{F}'$, with finite locally free quotient $\mc{F}/\mc{F}'$. Then we can take the Koszul filtration on $\wedge^i \mc{F}$ and we immediately see, working with local sections, that the construction we just gave entails the following.
\begin{lemma}[Koszul transversality]
\label{koszultransv}
The connection $\nabla$ respects the Koszul filtration up to a shift by one, that is, we have, for $i \geq 1$:
\[
    \nabla(K^i(\wedge^j \mc{F})) \subseteq K^{i-1}(\wedge^j \mc{F}) \otimes \Omega^1_{S/B}.
\]
More generally, if $(\mc{F}, F^\bullet(\mc{F}))$ is a finite locally free sheaf finitely filtered by finite locally free sheaves with finite locally free quotients, we have
\begin{align*}
    \nabla(F^i(\mc{F}^{\otimes j})) &\subseteq F^{i-1}(\mc{F}^{\otimes j}) \otimes \Omega^1_{S/B},\\
    \nabla(F^i(\mr{Sym}^j\mc{F})) &\subseteq (\mr{Sym}^j F^{i-1}(\mc{F})) \otimes \Omega^1_{S/B},
\end{align*}
for $i \geq 1$.
\end{lemma}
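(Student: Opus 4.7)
The statement is local on $S$, so I would choose an open affine and work with adapted local frames. The fundamental tool throughout is the Leibniz extension defining the induced connection on tensor powers,
\[
    \nabla(s_1 \otimes \cdots \otimes s_j) = \sum_{k=1}^j s_1 \otimes \cdots \otimes \nabla(s_k) \otimes \cdots \otimes s_j,
\]
which descends to the wedge and symmetric quotients. The crucial feature, on which both parts of the lemma rest, is that differentiation touches only one tensor slot at a time, so the filtration degree of a pure tensor can drop by at most the ``loss'' incurred in that single slot.

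For the Koszul filtration I would take local generators of $K^i(\wedge^j \mc{F})$ of the form $\omega = s_1 \wedge \cdots \wedge s_i \wedge t_1 \wedge \cdots \wedge t_{j-i}$, with each $s_k$ a local section of $\mc{F}'$ and each $t_l$ a local section of $\mc{F}$. Applying $\nabla$ by Leibniz yields two families of terms according to whether the differentiated factor sits in the $\mc{F}'$-part or in the $\mc{F}$-part of $\omega$. If an $s_k$ is differentiated, $\nabla(s_k) \in \mc{F} \otimes \Omega^1_{S/B}$ may leave $\mc{F}'$, so the surviving wedge has only $i-1$ factors guaranteed to lie in $\mc{F}'$ and the term lands in $K^{i-1}(\wedge^j \mc{F}) \otimes \Omega^1_{S/B}$. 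If a $t_l$ is differentiated, all $i$ factors $s_k$ remain in $\mc{F}'$, so the term stays in $K^i(\wedge^j \mc{F}) \otimes \Omega^1_{S/B} \subseteq K^{i-1}(\wedge^j \mc{F}) \otimes \Omega^1_{S/B}$. This settles the Koszul case.

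The general statements proceed by the same bookkeeping. A local generator of $F^i(\mc{F}^{\otimes j})$ is a pure tensor $s_1 \otimes \cdots \otimes s_j$ with $s_l \in F^{i_l}(\mc{F})$ and $\sum_l i_l = i$; applying $\nabla$ replaces one $s_k$ at a time by $\nabla(s_k)$, and the total filtration degree of the resulting tensor drops by at most one provided differentiation sends $F^{i_k}(\mc{F})$ into $F^{i_k-1}(\mc{F}) \otimes \Omega^1_{S/B}$---that is, provided $\nabla$ already transversely respects $F^\bullet(\mc{F})$. The symmetric statement then follows by passing to the symmetric quotient and reading off the resulting monomial weights from the description of $F^\bullet(\mr{Sym}^j \mc{F})$ recalled in \ref{secfiltsym}. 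The main point I expect to require care is precisely this bootstrap: the first part of the lemma guarantees that $\wedge^j \mc{F}$ with its Koszul filtration satisfies the transversality hypothesis needed to apply the general statement to its tensor and symmetric powers, so that the two parts of the lemma work in tandem when assembling the iterated filtrations on $\mb{S}^{\ul{k}}(H_\sigma)$ described in \ref{omegatoHfiltration}.
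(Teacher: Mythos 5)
Your argument is correct and matches what the paper leaves implicit (the proof is dispatched in the text as immediate from the Leibniz rule on local sections): work affine-locally with pure tensors, apply Leibniz one slot at a time, and observe that each summand drops filtration degree by at most one. Your remark that the ``more generally'' part, for a multi-step filtration $F^\bullet(\mc{F})$, implicitly presupposes $\nabla(F^k(\mc{F})) \subseteq F^{k-1}(\mc{F}) \otimes \Omega^1_{S/B}$ is a genuine refinement worth retaining: for a two-step filtration this is automatic, and in the applications (the filtrations assembled in \ref{omegatoHfiltration} and the $\ul{\omega}_{0,r}$-filtration, via Griffiths transversality and \ref{propmoreonKScharp}) it is always supplied, so the bootstrap you describe is exactly what makes the lemma usable downstream.
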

\subsection{The Gauss-Manin connection}
Consider the {first exact sequence} of $X/S/B$ which, for $f\colon X \to S$ smooth, is
\begin{equation}
\label{cotgcpxGM}
    0 \longrightarrow f^\ast \Omega^1_{S/B} \longrightarrow \Omega^1_{X/B} \longrightarrow \Omega^1_{X/S} \longrightarrow 0.    
\end{equation}
Then we can take the corresponding Koszul filtration of $\Omega^j_{X/B}$, for each $j=0, 1, \dots, \dim(X/B)$, whose graded pieces are
\[
    \mr{gr}^i (\Omega^j_{X/B}) = \Omega^{j-i}_{X/S} \otimes_{\mc{O}_X} f^\ast \Omega^i_{S/B}.
\]
One can check that $K^i(\Omega^\bullet_{X/B})$ is a sub-complex of $\Omega^\bullet_{X/B}$. In particular, the finite filtration $K^\bullet(\Omega^\bullet_{X/B})$ gives a convergent spectral sequence whose first page is 
\begin{align*}
    E_1^{i, j} &= R^{i+j}f_\ast(\mr{gr}^i(\Omega^\bullet_{X/B})) = R^{i+j}f_\ast( \Omega^{\bullet-i}_{X/S}  \otimes_{\mc{O}_X} f^\ast \Omega^i_{S/B})=\\
    & = R^{j}f_\ast(\Omega^{\bullet}_{X/S}  \otimes_{\mc{O}_X} f^\ast \Omega^i_{S/B}) \cong R^{j}f_\ast(\Omega^{\bullet}_{X/S}) \otimes_{\mc{O}_S} \Omega^i_{S/B}\\
    & = H^j_\mr{dR}({X/S})  \otimes_{\mc{O}_S} \Omega^i_{S/B}.
\end{align*}
Consider $\nabla = \nabla^j = d_1^{0, j}$, the differentials on the first page of $E_\bullet$. The multiplicative structure of $K^\bullet(\Omega^\bullet_{X/B})$ induces a multiplicative structure on $E_\bullet$, in the sense of \cite[p.~203]{katzoda} or \cite[5.4.8]{weibel}, given by cup products on $\Omega^\bullet_{S/B}, H^\bullet_\mr{dR}(X/S)$. From the definition of this multiplicative structure we deduce the following.
\begin{proposition}
The map $\nabla^j$ gives an integrable connection on ${H}^j_\mr{dR}(X/S)$.
\end{proposition}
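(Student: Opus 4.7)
The plan is to verify the two defining properties of a connection, the Leibniz rule and additivity, and then the integrability condition $\nabla_1 \circ \nabla = 0$, by exploiting the multiplicative structure on the spectral sequence arising from the multiplication on the Koszul-filtered complex $K^\bullet(\Omega^\bullet_{X/B})$.

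First, I would identify the bottom of the spectral sequence: since $f_\ast \mc{O}_X = \mc{O}_S$ and the filtration $K^\bullet$ on the complex $\Omega^\bullet_{X/B}$ restricts on $\Omega^0_{X/B} = \mc{O}_X$ to the trivial filtration, the edge differential $d_1^{0,0} \colon E_1^{0,0} = \mc{O}_S \to E_1^{1,0} = \Omega^1_{S/B}$ is nothing other than the exterior derivative $d_{S/B}$. This is a direct computation from the construction of the connecting morphisms in the filtration spectral sequence: a local section $h \in \mc{O}_S$ pulls back to $f^\ast h \in \mc{O}_X$, whose $d_{X/B}$ lies in $K^1(\Omega^1_{X/B})$ and, modulo $K^2$, is the image of $f^\ast d_{S/B}(h)$.

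Second, I would establish the Leibniz rule. The Koszul filtration is multiplicative: $K^i \wedge K^j \subseteq K^{i+j}$, so the cup-product pairings
\[
    H^{a}_{\mr{dR}}(X/S) \otimes_{\mc{O}_S} \Omega^{i}_{S/B} \otimes_{\mc{O}_S} H^{b}_{\mr{dR}}(X/S) \otimes_{\mc{O}_S} \Omega^{j}_{S/B} \longrightarrow H^{a+b}_{\mr{dR}}(X/S) \otimes_{\mc{O}_S} \Omega^{i+j}_{S/B}
\]
endow $E_1^{\bullet,\bullet}$ with a bigraded-algebra structure and the $d_1$ differentials with the derivation property. Taking $a=0$, $i=0$, $b=j$, $j=0$, and combining with the identification $d_1^{0,0} = d_{S/B}$ from the first step, one obtains exactly the Leibniz rule
\[
    \nabla(hs) = h\,\nabla(s) + s \otimes d_{S/B}(h), \qquad h \in \mc{O}_S,\ s \in H^j_{\mr{dR}}(X/S).
\]
This in particular shows that $\nabla$ is $g^{-1}\mc{O}_B$-linear (so $B$-linear), and completes the verification that $\nabla = d_1^{0,j}$ is a connection.

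Third, for integrability, I would invoke the identity $d_1 \circ d_1 = 0$ on any spectral sequence, which gives $d_1^{1,j} \circ d_1^{0,j} = 0$. It remains to identify $d_1^{1,j}$ with the canonical extension $\nabla_1 \colon H^j_{\mr{dR}}(X/S) \otimes \Omega^1_{S/B} \to H^j_{\mr{dR}}(X/S) \otimes \Omega^2_{S/B}$ of $\nabla$ acting via $\nabla_1(s \otimes \omega) = \nabla(s) \wedge \omega + s \otimes d_{S/B}(\omega)$. This identification again follows from the multiplicative structure: on the one hand, $d_1$ is a derivation; on the other hand, a pure tensor $s \otimes \omega \in E_1^{1,j}$ factors as the product of $s \in E_1^{0,j}$ and $\omega \in E_1^{1,0}$, and combining $d_1(s) = \nabla(s)$ and $d_1(\omega) = d_{S/B}(\omega)$ (the latter by the same argument as in the first step but in degree $1$) yields exactly $\nabla_1$. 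Hence $\nabla_1 \circ \nabla = d_1^{1,j} \circ d_1^{0,j} = 0$, i.e.\ the curvature vanishes and $\nabla$ is integrable.

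The main obstacle is the careful bookkeeping involved in the second and third steps: one must unravel the definition of the connecting morphisms in the filtration spectral sequence via local lifts of sections from graded pieces to the total complex, check compatibility with the cup-product structure with appropriate Koszul sign conventions, and verify that the resulting pairings on $E_1^{\bullet,\bullet}$ agree with the naive tensor multiplication of $\Omega^\bullet_{S/B}$-valued de Rham classes. Once these compatibilities are established, the Leibniz rule and integrability are essentially formal consequences of $d_1 \circ d_1 = 0$ and the multiplicativity of $d_1$.
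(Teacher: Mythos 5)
Your proof follows precisely the route the paper alludes to — exploiting the multiplicative structure of the Koszul-filtration spectral sequence, as in Katz–Oda and Weibel's Exercise 5.4.8 — so it is essentially the same approach; the paper simply cites these references rather than spelling out the argument, and you are filling it in.

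Two remarks on the details. First, the identifications $E_1^{0,0}\cong\mc{O}_S$ and $E_1^{1,0}\cong\Omega^1_{S/B}$ underpinning your Step 1 require $f_\ast\mc{O}_X=\mc{O}_S$, equivalently $H^0_{\mr{dR}}(X/S)=\mc{O}_S$; this is not among the section's stated hypotheses (only smoothness, properness and local freeness of the $H^i_{\mr{dR}}$ are assumed), though it does hold for the paper's case of interest, $X=A$ an abelian scheme. The Leibniz rule can also be verified directly, without this hypothesis, by applying $d_{X/B}$ to $f^\ast h\cdot\tilde s$ for a local lift $\tilde s$ and projecting to $\mr{gr}^1$. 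Second, and more substantively: the graded Leibniz rule for $d_1$ on a multiplicative spectral sequence introduces a sign in the total degree, so for $s\in E_1^{0,j}$ and $\omega\in E_1^{1,0}$ one gets $d_1(s\cdot\omega)=d_1(s)\cdot\omega+(-1)^{j}\,s\cdot d_1(\omega)$, whereas the paper's $\nabla_1$, as defined, carries no such sign; taken naively, $d_1^{1,j}\neq\nabla_1$ for odd $j$. This is resolvable — one either builds a compensating sign into the identification $E_1^{i,j}\cong H^j_{\mr{dR}}(X/S)\otimes\Omega^i_{S/B}$ (e.g.\ by choosing the ordering of the tensor factors consistently with the Koszul convention) or works with the graded version of $\nabla_\bullet$ — but you should treat this as a genuine step to close, not a formality: it is exactly the spot where the argument would otherwise silently break.
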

For more details see {\cite[2]{katzoda}}.
\begin{definition}[Gauss-Manin connection]
We call the natural connection $\nabla$ on $H^i_\mr{dR}(X/S)$ the \emph{Gauss-Manin connection (relative to $X/S/B$)}.
\end{definition}
Since the GM connection and the ones derived from it are essentially the only connections we will discuss, we reserve the notation $\nabla$ for them, from now on.
\begin{proposition}[Griffiths transversality]
\label{propgrifftransv}
Let $F^\bullet = F^\bullet(H^\bullet_\mr{dR}(X/S))$ denote the Hodge filtration. The GM connection respects this filtration up to a shift by 1, that is, we have
\[
    \nabla(F^i(H^j_\mr{dR}(X/S))) \subseteq  F^{i-1}(H^j_\mr{dR}(X/S)) \otimes \Omega^1_{X/S}.
\]
In particular, the GM connection induces a mapping between graded terms.
\end{proposition}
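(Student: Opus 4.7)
The plan is to obtain Griffiths transversality as a naturality property of the Koszul spectral sequence used to define $\nabla$, applied to a suitable sub complex of $\Omega^\bullet_{X/B}$ that encodes the Hodge filtration on relative de Rham cohomology. First, I would consider the naive truncation sub complexes
\[
    G^i\Omega^\bullet_{X/B} \coloneqq \Omega^{\geq i}_{X/B}, \quad i \geq 0,
\]
which are evidently stable under $d_{X/B}$. These intersect the Koszul filtration in a very controlled way: in degree $k$, $G^i \cap K^p$ equals $K^p\Omega^k_{X/B}$ when $k \geq i$ and vanishes when $k < i$. Passing to graded quotients, one obtains
\[
    \mr{gr}_K^p(G^i\Omega^\bullet_{X/B}) \cong f^\ast \Omega^p_{S/B} \otimes_{\mc{O}_X} F^{i-p}\Omega^{\bullet - p}_{X/S},
\]
where $F^\bullet \Omega^\bullet_{X/S} = \Omega^{\geq \bullet}_{X/S}$ is the truncation filtration computing the Hodge filtration on $H^\bullet_{\mr{dR}}(X/S)$, and the degree shift $\bullet - p$ accounts for the fact that a generator $\alpha \otimes \beta \in \Omega^p_{S/B} \otimes \Omega^r_{X/S}$ contributes in total degree $p + r$.

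Next, I would push forward under $f$ and invoke the $E_1$-degeneration of the Hodge--de Rham spectral sequence of $X/S$ (Proposition \ref{derhamabelian}(3)) to identify
\[
    E_1^{p, q}(G^i) = R^{p+q}f_\ast\bigl(\mr{gr}_K^p G^i\Omega^\bullet_{X/B}\bigr) \cong \Omega^p_{S/B} \otimes_{\mc{O}_S} F^{i-p}H^q_{\mr{dR}}(X/S).
\]
In particular, $E_1^{0, j}(G^i) = F^i H^j_{\mr{dR}}(X/S)$ and $E_1^{1, j}(G^i) = \Omega^1_{S/B} \otimes F^{i-1}H^j_{\mr{dR}}(X/S)$. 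The inclusion $G^i \hookrightarrow \Omega^\bullet_{X/B}$ is a morphism of $K^\bullet$-filtered sub complexes, inducing on the relevant $E_1$ entries the natural inclusion $\Omega^p_{S/B} \otimes F^{i-p}H^q_{\mr{dR}} \hookrightarrow \Omega^p_{S/B} \otimes H^q_{\mr{dR}}$. By functoriality of the spectral sequence, $d_1^{0, j}(G^i)$ is identified with the restriction of $d_1^{0, j}(\Omega^\bullet_{X/B}) = \nabla$ to $F^i H^j_{\mr{dR}}$, giving
\[
    \nabla\bigl(F^i H^j_{\mr{dR}}(X/S)\bigr) \subseteq F^{i-1}H^j_{\mr{dR}}(X/S) \otimes \Omega^1_{S/B}.
\]
The statement about graded pieces then follows immediately: the same argument with $i+1$ in place of $i$ gives $\nabla(F^{i+1}) \subseteq F^i \otimes \Omega^1_{S/B}$, so $\nabla$ descends to a well-defined $\mc{O}_S$-linear morphism $\mr{gr}^i H^j_{\mr{dR}} \to \mr{gr}^{i-1} H^j_{\mr{dR}} \otimes \Omega^1_{S/B}$.

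The main technical obstacle is the careful bookkeeping of the degree shift between the Koszul graded (where a $p$-fold Koszul factor adds $p$ to total degree while the Hodge degree is tracked on the complementary factor) and the truncation index on $\Omega^\bullet_{X/S}$; one must verify that the combinatorics conspire to produce exactly $F^{i-p}$ (and not $F^{i-2p}$ or similar) on the $E_1$-page. Secondarily, Hodge--de Rham degeneration is essential to identify $R^q f_\ast$ of the truncated complex $F^{i-p}\Omega^\bullet_{X/S}$ with the Hodge filtration piece $F^{i-p}H^q_{\mr{dR}}(X/S)$, rather than merely the image of a comparison map which would give a weaker statement.
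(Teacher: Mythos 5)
Your proof is correct and is essentially the argument carried out in the reference the paper cites, Katz \cite[1.4.1]{katzdiff}: intersect the naive truncation $G^i\Omega^\bullet_{X/B} = \Omega^{\geq i}_{X/B}$ with the Koszul filtration, push forward, and use functoriality of the Koszul spectral sequence to constrain where $\nabla = d_1^{0,j}$ sends the Hodge filtration. The paper does not rederive this; it defers to Katz, and then observes that for $X=A$ an abelian scheme the statement reduces to the more elementary Lemma \ref{koszultransv}: Proposition \ref{derhamabelian}(2) gives $H^j_{\mr{dR}}(A/S)\cong\wedge^j H^1_{\mr{dR}}(A/S)$ compatibly with $\nabla$, under which the Hodge filtration on $H^j_{\mr{dR}}$ becomes the Koszul filtration on $\wedge^j H^1_{\mr{dR}}$ induced by $\ul{\omega}_{A/S}\subset H^1_{\mr{dR}}(A/S)$, and the shift-by-one property is just the Leibniz rule on exterior powers. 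Your route buys generality and a clean identification of every $E_1$-entry; the paper's shortcut buys brevity and avoids the spectral sequence entirely in the abelian case, which is the only case it uses. One small remark on your closing paragraph: degeneration of Hodge--de Rham is what lets you identify $E_1^{p,q}(G^i)$ as exactly $\Omega^p_{S/B}\otimes F^{i-p}H^q_{\mr{dR}}(X/S)$, but for the inclusion $\nabla(F^iH^j)\subseteq F^{i-1}H^j\otimes\Omega^1_{S/B}$ itself it would have sufficed to know that the comparison map $R^qf_\ast(F^{i-p}\Omega^\bullet_{X/S})\to H^q_{\mr{dR}}(X/S)$ has image $F^{i-p}H^q_{\mr{dR}}$ (which is how the Hodge filtration is defined) and then chase the commuting square of $d_1$-differentials induced by $G^i\hookrightarrow\Omega^\bullet_{X/B}$; degeneration is a convenience here rather than a necessity, though it does of course hold in the abelian case by Proposition \ref{derhamabelian}(3).
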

For more details see \cite[1.4.1]{katzdiff}. Notice that for $X=A$ an abelian scheme this essentially reduces to \ref{koszultransv}, thanks to \ref{derhamabelian}, point 2.
\subsubsection{Base change}
Let $f' \colon X' \to S$ be a proper, smooth morphism and let $\phi \colon X \to X'$ be a morphism of $S$-schemes. The construction of the GM connection is functorial in the following sense.
\begin{lemma}
\label{lemGMpullback}
With notation as above, we have the natural commutative diagram
\[
\begin{tikzcd}
    & H^i_\mr{dR}(X'/S) \arrow[d, "\phi^\ast"] \arrow[r, "\nabla"] & H^i_\mr{dR}(X'/S) \otimes \Omega^1_{S/B} \arrow[d, "\phi^\ast \otimes \mr{id}"]\\
    & H^i_\mr{dR}(X/S) \arrow[r, "\nabla"] & H^i_\mr{dR}(X/S) \otimes \Omega^1_{S/B}.
\end{tikzcd}
\]
\end{lemma}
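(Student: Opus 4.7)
The plan is to trace the construction of the Gauss--Manin connection through the morphism $\phi$ and observe that everything is functorial at the level of filtered de Rham complexes.

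Recall from the construction preceding Definition of the GM connection that $\nabla$ arises as the $d_1^{0,j}$ differential of the spectral sequence associated to the Koszul filtration $K^\bullet(\Omega^\bullet_{X/B})$ coming from the first exact sequence \eqref{cotgcpxGM} applied to $X/S/B$. The first step is to note that the same construction applied to $X'/S/B$ produces the GM connection on $H^j_{\mr{dR}}(X'/S)$ as the corresponding $d_1^{0,j}$.

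The key observation is that the map $\phi\colon X\to X'$, being a morphism of $S$-schemes, fits into a commutative diagram
\[
\begin{tikzcd}
0 \arrow[r] & f'^*\Omega^1_{S/B} \arrow[r] \arrow[d, "="] & \Omega^1_{X'/B} \arrow[r] \arrow[d, "\phi^*"] & \Omega^1_{X'/S} \arrow[r] \arrow[d, "\phi^*"] & 0 \\
0 \arrow[r] & \phi^* f'^*\Omega^1_{S/B} \arrow[r] & \phi^*\Omega^1_{X/B} \arrow[r] & \phi^*\Omega^1_{X/S} \arrow[r] & 0
\end{tikzcd}
\]
where the identification $f'^*\Omega^1_{S/B}=\phi^* f'^* \Omega^1_{S/B} = f^*\Omega^1_{S/B}$ uses $f'\circ\phi=f$. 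Taking exterior powers and then the induced Koszul filtrations, one sees that $\phi^*\colon \phi^*\Omega^\bullet_{X'/B}\to \Omega^\bullet_{X/B}$ is a morphism of filtered complexes of $\mc{O}_X$-modules. Next I would apply $Rf'_\ast$ (equivalently $Rf_\ast$, using $f=f'\circ\phi$ and the projection formula/composition of pushforwards together with adjunction $\phi^*\dashv \phi_*$) to obtain a morphism between the filtered complexes computing de Rham cohomology, hence a morphism of the associated spectral sequences. On the $E_1$ page this morphism becomes, after the identification
\[
    E_1^{i,j}\cong H^j_{\mr{dR}}(\cdot/S)\otimes_{\mc{O}_S}\Omega^i_{S/B},
\]
the map $\phi^*\otimes \mr{id}$, and morphisms of spectral sequences commute with all differentials, in particular with $d_1^{0,j}=\nabla$. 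This yields the claimed commutative square.

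The only potentially subtle point is to check that the map induced by $\phi^*$ on the $E_1^{0,j}$-term really is $\phi^*\colon H^j_{\mr{dR}}(X'/S)\to H^j_{\mr{dR}}(X/S)$ and that on the $E_1^{1,j}$-term it is $\phi^*\otimes \mr{id}_{\Omega^1_{S/B}}$. Both identifications follow, however, from the fact that $\mr{gr}^i\Omega^\bullet_{\cdot/B}\cong \Omega^{\bullet-i}_{\cdot/S}\otimes f^{(\prime)\,*}\Omega^i_{S/B}$ and that $\phi^*$ on graded pieces acts as $\phi^*$ on the $\Omega^{\bullet-i}_{\cdot/S}$ factor and as the identity on the pulled-back $\Omega^i_{S/B}$ factor, which is clear from the explicit diagram above. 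This concludes the proof.
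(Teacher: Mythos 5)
Your proof is correct and follows the same route as the paper: the morphism of first exact sequences induced by $\phi$ gives a morphism of filtered de Rham complexes, hence a morphism of the associated spectral sequences, and reading this off on the $E_1$-page yields the claimed commutative square. One small slip in your displayed diagram: since $\phi\colon X\to X'$, the pullback $\phi^*$ should be applied to the \emph{top} row (the first exact sequence of $X'$), so that all terms become sheaves on $X$, while the bottom row should be the first exact sequence of $X$ unadorned (no $\phi^*$), with the left vertical map the canonical identification $\phi^*f'^*\Omega^1_{S/B}\cong f^*\Omega^1_{S/B}$; as written, the purported equality $f'^*\Omega^1_{S/B}=\phi^*f'^*\Omega^1_{S/B}$ compares sheaves on different schemes and does not typecheck.
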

\begin{proof}
The morphism $\phi$ induces a natural commutative diagram whose rows are \ref{cotgcpxGM} for $X$ and $X'$:
\[
    \begin{tikzcd}
        & 0 \arrow[r] & f^\ast \Omega^1_{S/B} \arrow[r] \arrow[equal]{d} & \phi^\ast \Omega^1_{X'/B} \arrow[r] \arrow[d] & \phi^\ast \Omega^1_{X'/S} \arrow[r] \arrow[d] &0 \\
        & 0 \arrow[r] & f^\ast \Omega^1_{S/B} \arrow[r] & \Omega^1_{X/B} \arrow[r] & \Omega^1_{X/S} \arrow[r] &0.
    \end{tikzcd}
\]
This in turn induces a natural morphism $\phi^\ast K^\bullet(\Omega^\bullet_{X'/B}) \to K^\bullet(\Omega^\bullet_{X/B})$, whence a morphism of spectral sequences $\phi^\ast \colon E_\bullet(X'/S/B) \to E_\bullet(X/S/B)$, which gives us what we want.
\end{proof}
\par Let now $T$ be a smooth scheme over $B$, with $u \colon T \to S$ a morphism of $B$-schemes. Then we have a natural base change morphism
\[
    u^\ast H^i_\mr{dR}(X/S) \longrightarrow H^i_\mr{dR}(X_T/T)
\]
which is an isomorphism for every $u$. 

The construction of $\nabla$ immediately implies the following.
\begin{lemma}
\label{lemGMbasechange}
With $X, S, B$ and $u \colon T \to S$ as above, we have the commutative diagram
\[
    \begin{tikzcd}
        & u^\ast H^i_\mr{dR}(X/S) \arrow[d, equal] \arrow[dr, "u^\ast({\nabla})"] \\
        & H^i_\mr{dR}(X_T/T) \arrow[r, "\nabla"] & H^i_\mr{dR}(X_T/T) \otimes \Omega^1_{T/B}.
    \end{tikzcd}
\]
where the diagonal map is the pullback connection induced by base change $u^\ast H^i_\mr{dR}(X/S) \cong H^i(X_T/T)$ and the natural map $d u \colon u^\ast \Omega^1_{S/B} \to \Omega^1_{T/B}$.
\end{lemma}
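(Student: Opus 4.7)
The plan is to deduce the diagram directly from the functorial definition of the Gauss--Manin connection via the Koszul filtration on $\Omega^\bullet_{X/B}$, together with the naturality of that filtration under arbitrary base change.

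First, I would form the Cartesian square with $\tilde u\colon X_T \to X$ lying over $u\colon T \to S$, where $X_T = X \times_S T$. Because smoothness and properness are preserved by base change, the morphism $f_T\colon X_T \to T$ satisfies the hypotheses under which the Gauss--Manin connection was constructed, so $\nabla$ on $H^i_{\mr{dR}}(X_T/T)$ is defined as $d_1^{0,i}$ of the spectral sequence associated to the Koszul filtration $K^\bullet(\Omega^\bullet_{X_T/B})$ induced by the first exact sequence of $X_T/T/B$.

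Next, I would compare the two first exact sequences. The morphism $\tilde u$ induces a commutative diagram
\[
\begin{tikzcd}
0 \arrow[r] & \tilde u^\ast f^\ast \Omega^1_{S/B} \arrow[r] \arrow[d] & \tilde u^\ast \Omega^1_{X/B} \arrow[r] \arrow[d] & \tilde u^\ast \Omega^1_{X/S} \arrow[r] \arrow[d, "\cong"] & 0 \\
0 \arrow[r] & f_T^\ast \Omega^1_{T/B} \arrow[r] & \Omega^1_{X_T/B} \arrow[r] & \Omega^1_{X_T/T} \arrow[r] & 0
\end{tikzcd}
\]
where the left vertical arrow is $f_T^\ast(du)$ and the right one is the canonical isomorphism for a Cartesian square of smooth morphisms. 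Passing to exterior powers and taking wedge products yields a morphism of Koszul filtrations $\tilde u^\ast K^\bullet(\Omega^\bullet_{X/B}) \to K^\bullet(\Omega^\bullet_{X_T/B})$ compatible with the differentials, and hence a morphism of the associated filtration spectral sequences after applying $Rf_{T,\ast}$ and using the base change transformation $u^\ast R^\bullet f_\ast \to R^\bullet f_{T,\ast}\tilde u^\ast$.

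Then, on $E_1$ pages, the identification $E_1^{i,j} = H^j_\mr{dR}(X/S) \otimes \Omega^i_{S/B}$ (proved in \cite{katzoda} and used in the construction of $\nabla$) pulls back under $u$ to $u^\ast H^j_\mr{dR}(X/S) \otimes u^\ast \Omega^i_{S/B}$, and the morphism of spectral sequences just constructed sends this, via the base change isomorphism $u^\ast H^j_\mr{dR}(X/S) \cong H^j_\mr{dR}(X_T/T)$ on the first factor and $du$ on the second factor, to $H^j_\mr{dR}(X_T/T) \otimes \Omega^i_{T/B}$. Since morphisms of spectral sequences commute with the differentials $d_1^{0,j}$, and $\nabla$ is by definition this differential, the required diagram commutes.

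The main technical obstacle is simply verifying that the identification of the $E_1$ page with $H^j_\mr{dR}(X/S) \otimes \Omega^i_{S/B}$ (obtained from the projection formula) is itself natural under $\tilde u^\ast$: concretely, one must check that the map induced on $R^j f_\ast(\Omega^\bullet_{X/S} \otimes f^\ast \Omega^i_{S/B})$ factors as the tensor product of the base change map on de Rham cohomology with $u^\ast$ on forms. This is standard but requires invoking flat base change for $R^jf_\ast$ and the projection formula; all remaining verifications are bookkeeping on the morphism of spectral sequences.
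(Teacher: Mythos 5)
Your proof is correct and takes essentially the same approach the paper implicitly relies on: the paper offers no explicit argument, dismissing the lemma with "The construction of $\nabla$ immediately implies the following," and your argument is precisely the elaboration one expects — build the comparison of first exact sequences induced by $\tilde u$, pass to a morphism of Koszul filtrations, then of filtration spectral sequences, and read off commutation with $d_1^{0,j}$, exactly parallel to the paper's own proof of Lemma \ref{lemGMpullback}.
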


\begin{remark}
Let us consider for a moment the case $X=A$, for $\ul{A} \in \mc{S}_{K, B}(S)$, $B \in \ul{\mr{Sch}}_{\mc{O}_{E,(p)}}$. Then, in particular, we have an action of $\mc{O}_E$ on $H^i_\mr{dR}(A/S)$. Proposition \ref{lemGMpullback} implies that the GM connection respects the $\Oe$ action, that is, for $H=H^1_\mr{dR}(A/S)$, we have restrictions
\begin{align*}
    \nabla &\colon H_\sigma \longrightarrow H_\sigma \otimes_{\mc{O}_S} \Omega^1_{S/B},\\
    \nabla &\colon H_{\overline{\sigma}} \longrightarrow H_{\overline{\sigma}} \otimes_{\mc{O}_S} \Omega^1_{S/B}.
\end{align*}
Similarly, if $B$ is of characteristic $p$ we can consider the isogenies Frobenius and Verschiebung and take their pullback actions $F\colon H^{(p)} \to H$ and $V\colon H \to H^{(p)}$. Then \ref{lemGMpullback} tells us that $\nabla$ commutes with these actions and therefore it respects kernels and images of both. This will be relevant when defining generalised theta operators.
\end{remark}

\subsection{The Kodaira-Spencer map}
The short exact sequence \ref{cotgcpxGM} gives a natural element
\[
    \ul{\mr{KS}} \coloneqq \ul{\mr{KS}}_{X/S/B} \in \mr{Ext}_{\mc{O}_X}(\Omega^1_{X/S}, f^\ast \Omega^1_{S/B}) \cong H^1(X, \ul{\mr{Der}}(X/S) \otimes f^\ast \Omega^1_{S/B}).
\]
This is called the \emph{Kodaira-Spencer class}. From the Leray spectral sequence we obtain
\[
    H^1(X, \ul{\mr{Der}}(X/S) \otimes f^\ast \Omega^1_{S/B}) \longrightarrow H^0(S, R^1f_\ast(\ul{\mr{Der}}(X/S) \otimes f^\ast \Omega^1_{S/B})).
\]
We denote the image of $\ul{\mr{KS}}$ via this morphism by $\ul{\mr{KS}}$ again and consider
\begin{align*}
    \ul{\mr{KS}} &\in H^0(S, R^1f_\ast(\ul{\mr{Der}}(X/S) \otimes_{\mc{O}_X} f^\ast \Omega^1_{S/B})) \\
    &\cong H^0(S, R^1f_\ast(\ul{\mr{Der}}(X/S)) \otimes_{\mc{O}_S} \Omega^1_{S/B}) \\
    &\cong \hom_{S}(\ul{\mr{Der}}(S/B), R^1f_\ast(\ul{\mr{Der}}(X/S))).
\end{align*}
\begin{definition}
The morphism $\ul{\mr{KS}} \colon \ul{\mr{Der}}(S/B)\to R^1f_\ast(\ul{\mr{Der}}(X/S))$ is called the \emph{Kodaira-Spencer mapping}.
\end{definition}
\begin{proposition}[{\cite[1.4.1.7]{katzdiff}}]
\label{propKSGM}
Suppose that the Hodge-de Rham spectral sequence of $X/S$ degenerates on the first page. Then the morphism induced on the graded pieces from \ref{propgrifftransv} by the GM connection
\[
    \begin{tikzcd}
        & \mr{gr}_F^i H_\mr{dR}^{i+j}(X/S) \arrow[d, "\sim"] \arrow[r, "\nabla"] & \mr{gr}_F^{i-1} H_\mr{dR}^{i+j}(X/S) \otimes \Omega^1_{S/B} \arrow[d, "\sim"]\\
        & R^jf_\ast \Omega^i_{X/S} \arrow[r, "\nabla"] &R^{j+1}f_\ast \Omega^{i-1}_{X/S} \otimes \Omega^1_{S/B}
    \end{tikzcd}
\]
is given by the cup product with the KS section
\[
    \ul{\mr{KS}} \in H^0(S, R^1f_\ast(\ul{\mr{Der}}(X/S)) \otimes_{\mc{O}_S} \Omega^1_{S/B}).
\]
\end{proposition}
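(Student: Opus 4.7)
The plan is to trace both sides back to the same Koszul filtration $K^\bullet(\Omega^\bullet_{X/B})$ associated to the first exact sequence \ref{cotgcpxGM} and to identify both morphisms as the connecting homomorphism of a common short exact sequence, namely the one whose extension class encodes $\ul{\mr{KS}}$.

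The first step is to isolate the two layers $K^1/K^2$ and $K^0/K^1$ of the Koszul filtration on $\Omega^i_{X/B}$ to obtain, for each $i \geq 1$, the short exact sequence
\[
    0 \longrightarrow \Omega^{i-1}_{X/S} \otimes_{\mc{O}_X} f^\ast \Omega^1_{S/B} \longrightarrow \Omega^i_{X/B}\big/K^2(\Omega^i_{X/B}) \longrightarrow \Omega^i_{X/S} \longrightarrow 0.
\]
By construction this ses is obtained from the defining sequence \ref{cotgcpxGM} by wedging with $\Omega^{i-1}_{X/S}$, so its extension class is the image of $\ul{\mr{KS}}$ under the natural pairing $\mr{Ext}^1(\Omega^1_{X/S}, f^\ast \Omega^1_{S/B}) \to \mr{Ext}^1(\Omega^i_{X/S}, \Omega^{i-1}_{X/S} \otimes f^\ast \Omega^1_{S/B})$. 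Applying $R^\bullet f_\ast$ and the projection formula, the connecting map becomes
\[
    \delta^{i,j} \colon R^j f_\ast \Omega^i_{X/S} \longrightarrow R^{j+1} f_\ast \Omega^{i-1}_{X/S} \otimes_{\mc{O}_S} \Omega^1_{S/B},
\]
and this $\delta^{i,j}$ is, by the standard description of connecting morphisms in terms of Yoneda Ext, precisely cup product with $\ul{\mr{KS}}$.

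The second step is to realise the induced map on graded pieces of the Hodge filtration as the same connecting morphism $\delta^{i,j}$. For this I would introduce the double filtration on $\Omega^\bullet_{X/B}$ given by intersecting the Koszul filtration $K^\bullet$ with the pullback of the stupid filtration $\sigma^{\geq i}(\Omega^\bullet_{X/S})$ on the quotient complex $\Omega^\bullet_{X/S}$. The hypothesis that the Hodge-de Rham spectral sequence of $X/S$ degenerates at $E_1$ ensures that the stupid-filtration pieces compute the graded pieces of the Hodge filtration: $\mr{gr}^i_F H^{i+j}_\mr{dR}(X/S) \cong R^j f_\ast \Omega^i_{X/S}$. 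With this identification, the bi-filtered spectral sequence machinery expresses the effect of $\nabla = d_1^{0,\bullet}$ (from the Koszul spectral sequence defining the GM connection) on a class in $\mr{gr}^i_F$ by the familiar recipe: lift a local section of $\Omega^i_{X/S}$ to $\Omega^i_{X/B}/K^2$, apply the exterior differential $d_{X/B}$, and read the component in $\Omega^{i-1}_{X/S} \otimes f^\ast \Omega^1_{S/B}$.

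The third step is then the diagram chase verifying that this recipe coincides with the explicit description of $\delta^{i,j}$ from step one; both are computed by the same choice of lift followed by the same projection, so they agree. The main obstacle will be the bookkeeping needed to check that the spectral sequence identifications commute with the various short exact sequences, and in particular that the degeneration hypothesis is genuinely used to split off the higher-order contributions that would otherwise pollute the formula on graded pieces. Once this is set up cleanly, the proof reduces to a formal, local computation that could also be carried out on an affine open where $\Omega^1_{X/B}$, $f^\ast \Omega^1_{S/B}$, and $\Omega^1_{X/S}$ are all free, using explicit \v{C}ech representatives for both $\ul{\mr{KS}}$ and $\nabla$.
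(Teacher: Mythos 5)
The paper does not give a proof of this proposition: it simply cites \cite[1.4.1.7]{katzdiff}, and then immediately specialises to the abelian-scheme case, so there is no internal argument to compare your attempt against.

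As a reconstruction of the standard Katz--Oda argument, your outline is sound and follows the expected route. Two points where you are leaning on hand-waving that would need to be replaced with actual content when writing this out. First, in step one you assert that the class of the sequence $0 \to \Omega^{i-1}_{X/S} \otimes f^\ast\Omega^1_{S/B} \to \Omega^i_{X/B}/K^2 \to \Omega^i_{X/S} \to 0$ is obtained from $\ul{\mr{KS}}$ by ``a natural pairing''; there is no pairing here, and the honest statement is that this class is the image of $\ul{\mr{KS}}$ under a functoriality map built from the contraction $\ul{\mr{Der}}(X/S) \otimes \Omega^i_{X/S} \to \Omega^{i-1}_{X/S}$. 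In practice this identification is most cleanly established by the \v{C}ech computation you already anticipate at the end, not by abstract Ext manipulations, so you may as well drop the Ext language and go straight to representatives. Second, in step three you describe the effect of $d_1^{0,j}$ on $\mr{gr}^i_F$ as ``lift a local section of $\Omega^i_{X/S}$ to $\Omega^i_{X/B}/K^2$, apply $d_{X/B}$, and read off the component''; as written this conflates sections of a single sheaf with hypercohomology classes of a truncated complex. What the connecting homomorphism actually does is lift a \v{C}ech hypercocycle for $\sigma^{\geq i}\Omega^\bullet_{X/S}$ to a cochain for $\sigma^{\geq i}(\Omega^\bullet_{X/B}/K^2)$ and apply the total \v{C}ech--de Rham differential. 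This is the place where the $E_1$-degeneration hypothesis genuinely enters, since it is what lets you identify $\mr{gr}^i_F H^{i+j}_\mr{dR}(X/S)$ with $R^jf_\ast\Omega^i_{X/S}$ and lets the higher truncation pieces decouple. Once these two points are made explicit, the remaining diagram chase is routine and your plan will go through.
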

\par Let now $X=A$ be an abelian scheme over $S$. We are interested in the special case of \ref{propKSGM} for $A/S/B$. Let $H = H^1_\mr{dR}(A/S)$. We have, on the one hand, a morphism:
\[
    \ul{\omega}_{A/S} \subset H \overset{\nabla}{\longrightarrow} H \otimes \Omega^1_{S/B} \twoheadrightarrow \ul{\omega}_{A^\vee/S} \otimes_{\mc{O}_S} \Omega^1_{S/B}
\]
whence, equivalently, a morphism
\[
    \ul{\omega}_{A/S} \otimes_{\mc{O}_S} \ul{\omega}_{A^\vee/S} {\longrightarrow} \Omega^1_{S/B}.
\]
On the other hand, we know that $\Omega^1_{A/S} \cong f^\ast \ul{\omega}_{A/S}$, so that $\ul{\mr{Der}}(A/S) \cong f^\ast \ul{\omega}_{A/S}^\vee$ and $R^1f_\ast \ul{\mr{Der}}(A/S) \cong R^1f_\ast(\mc{O}_{A}) \otimes_{\mc{O}_S} \ul{\omega}_{A/S}^\vee \cong \ul{\omega}_{A^\vee/S}^\vee \otimes_{\mc{O}_S} \ul{\omega}_{A/S}^\vee$. By duality then the KS morphism is equivalent to
\[
    \ul{\mr{KS}} \colon \ul{\omega}_{A/S} \otimes_{\mc{O}_S} \ul{\omega}_{A^\vee/S} {\longrightarrow} \Omega^1_{S/B}
\]
Then, \ref{propKSGM} tells us that these two morphisms $\ul{\omega}_{A/S} \otimes_{\mc{O}_S} \ul{\omega}_{A^\vee/S} {\to} \Omega^1_{S/B}$ are one and the same.
\par Assume now that $A \to S =\mc{S}_{K,B}$ is the universal object, $B \to \spec \mc{O}_{E, (p)}$. Then, using the identification
\[
    \ul{\omega}_{A/S, \sigma} \otimes_{\mc{O}_S} \ul{\omega}_{A^\vee/S, \sigma} \cong \ul{\omega}_{A/S, \sigma} \otimes_{\mc{O}_S} \det \ul{\omega}_{A/S, \sigma} \otimes \delta_{A/S, \sigma}^{-1},
\]
we see that $\ul{\mr{KS}}$ induces a morphism
\[
    \ul{\mr{ks}} = \ul{\mr{ks}}_{A/S/B} \coloneqq \ul{\mr{KS}}_\sigma \colon \ul{\omega}_{A/S, \sigma} \otimes_{\mc{O}_S} \det \ul{\omega}_{A/S, \sigma} \otimes \delta_{A/S, \sigma}^{-1} {\longrightarrow} \Omega^1_{S/B}.
\]
Then we have the following.
\begin{proposition}[{\cite[2.3.5.2]{kwl}}]
\label{propksiso}
The map $\ul{\mr{ks}}$ is an isomorphism.
\end{proposition}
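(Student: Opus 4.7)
The plan is to verify that $\ul{\mr{ks}}$ is an isomorphism by reducing to a local question between locally free sheaves of the same rank, and then invoking Grothendieck-Messing / Serre-Tate deformation theory to describe the tangent sheaf of $\mc{S}_K$. First, I would observe that both source and target are locally free of rank $n-1$: the source is a tensor product of locally free sheaves whose rank is $\mr{rk}(\ul{\omega}_{A/S,\sigma}) \cdot \mr{rk}(\det \ul{\omega}_{A/S,\sigma}) \cdot \mr{rk}(\delta_{A/S,\sigma}^{-1}) = (n-1) \cdot 1 \cdot 1$, and the target is $\Omega^1_{S/B}$, which is locally free of rank $n-1$ since $\mc{S}_K$ is smooth of relative dimension $n-1$ over $\mc{O}_{E,(p)}$. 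It therefore suffices to show that $\ul{\mr{ks}}$ is surjective, a property which can be checked on stalks and further reduced to fibres at geometric points via Nakayama.

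Next, having fixed a geometric point $s$ of $S = \mc{S}_{K,B}$ and working over the local Artinian thickening $\mr{Spec}(R[\epsilon])$, I would appeal to Grothendieck-Messing crystalline deformation theory (as used in \cite{kwl}) to identify the tangent space of $\mc{S}_K$ at $s$ with the space of $\Oe$-linear lifts of the Hodge filtration $\ul{\omega}_{A_s/s} \subset H^1_\mr{dR}(A_s/s)$ to $R[\epsilon]$ that are compatible, via $\lambda$, with the induced filtration on the dual crystal. Such lifts form a torsor under the $\Oe$-linear, $\lambda$-symmetric Hom space
\[
    \hom_{\Oe, \mr{sym}}\bigl(\ul{\omega}_{A/S},\, H^1_\mr{dR}(A/S)/\ul{\omega}_{A/S}\bigr)\bigr|_s,
\]
and the Kodaira-Spencer map of Proposition \ref{propKSGM} is precisely dual to this identification; in particular, $\ul{\mr{KS}}$ is surjective onto $\Omega^1_{S/B}$ when restricted to the subsheaf of $\ul{\omega}_{A/S} \otimes \ul{\omega}_{A^\vee/S}$ cut out by the Rosati symmetry and the $\Oe$-linearity.

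The third step is to decompose this using the CM splitting. The $\Oe$-linearity forces any nontrivial component to be of type $\sigma \otimes \sigma$ or $\overline{\sigma} \otimes \overline{\sigma}$, and the Rosati symmetry identifies these two components (using the isomorphisms $\ul{\omega}_{A^\vee/S, \sigma} \cong \ul{\omega}_{A/S, \overline{\sigma}}$ induced by $\lambda$). This leaves a single copy, the $\sigma$-component
\[
    \ul{\omega}_{A/S, \sigma} \otimes_{\mc{O}_S} \ul{\omega}_{A^\vee/S, \sigma},
\]
of rank $(n-1) \cdot 1 = n - 1$. Now I would invoke the canonical isomorphism \ref{determinanteq}, that is, $\ul{\omega}_{A^\vee/S, \sigma} \cong \det \ul{\omega}_{A/S, \sigma} \otimes \delta_{A/S, \sigma}^{-1}$, to rewrite the surviving component as $\ul{\omega}_{A/S,\sigma} \otimes \det \ul{\omega}_{A/S,\sigma} \otimes \delta_{A/S,\sigma}^{-1}$. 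Since $\ul{\mr{KS}}$ is already known to be surjective when restricted to this piece (by the tangent space calculation), and the source and target now have equal ranks, $\ul{\mr{ks}}$ must be an isomorphism.

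The main obstacle is the careful bookkeeping of the CM decomposition together with the interplay between the two different forms of the Hodge filtration (natural versus polarised) and the determinant sheaves $\delta_{A/S, \sigma}$. In particular, using the polarisation-independent identification \ref{determinanteq} instead of the polarisation-dependent identifications that arise from \ref{hfiltsigma2} is important because the former is what makes the statement of Proposition \ref{propksiso} --- and the subsequent Hecke-equivariance of the theta operators (cf.\ Remark \ref{noncanonicremark}) --- clean. Once the matching of the source is established, everything else is bookkeeping.
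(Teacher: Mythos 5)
Your argument is correct, and it reproduces the Grothendieck--Messing deformation-theoretic proof that the paper explicitly offers as an alternative. The paper's primary proof of Proposition \ref{propksiso} is a direct citation to \cite[2.3.5.2]{kwl} together with a base-change observation, followed by a remark translating notations; the deformation-theoretic route (via \cite{faltchai}, \cite{gbtmontreal}, \cite{bellaiche}) is mentioned at the end of the proof and then carried out in detail in the proof of Proposition \ref{propmoreonKScharp}. Your steps --- identifying the tangent space $T_{\mc{S}_K, s}$ at a geometric point with the space of $\Oe$-linear, $\lambda$-symmetric lifts of the Hodge filtration $\ul{\omega}_{A_s} \subset H^1_\mr{dR}(A_s)$, decomposing under the CM action and Rosati symmetry to isolate the single component $\ul{\omega}_{A/S,\sigma} \otimes \ul{\omega}_{A^\vee/S,\sigma}$, rewriting via the polarisation-independent isomorphism \ref{determinanteq}, and noting this is dual to the KS map --- match what the paper does in \ref{propmoreonKScharp}. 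One small remark: since the Grothendieck--Messing identification is an isomorphism of $k(s)$-vector spaces once the trivial lift is taken as basepoint, you get bijectivity of $\ul{\mr{ks}} \otimes k(s)$ directly, so the rank-comparison-plus-surjectivity framing in your first step is not strictly needed; it is a harmless alternative phrasing of the same fact. Both routes are valid; the paper's primary route is shorter because it defers to \cite{kwl}, while yours (and the paper's alternative) has the advantage of being self-contained and of setting up exactly the machinery needed later for the lower EO strata.
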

\begin{proof}
The proof in \cite[2.3.5.2]{kwl} is given for $B = \spec \mc{O}_{E, (p)}$ but the construction of $\ul{\mr{ks}}$ is compatible with base change, so the statement over a general base $B \to \spec \mc{O}_{E, (p)}$ follows. Notice that, in the notation of \cite[2.3.5.1]{kwl}, the injection $\ul{\omega}_{A/S, \sigma} \otimes_{\mc{O}_S} \ul{\omega}_{A^\vee/S, \sigma} \to \ul{\omega}_{A/S} \otimes_{\mc{O}_S} \ul{\omega}_{A^\vee/S}$ composed with the projection
\begin{align*}
    \ul{\omega}_{A/S} \otimes_{\mc{O}_S} \ul{\omega}_{A^\vee/S} \to \ul{\omega}_{A/S} \otimes_{\mc{O}_S} \ul{\omega}_{A^\vee/S}/\big(&\lambda^\ast(y)\otimes z - \lambda^\ast(z)\otimes y,\\
    & (\alpha \cdot x) \otimes y - x \otimes (\alpha \cdot y) \big)
\end{align*}
is an isomorphism, where $x \in \ul{\omega}_{A/S}(U), y, z \in \ul{\omega}_{A^\vee/S}(U), \alpha \in \Oe$ are sections over $U \subset S$ some open that varies.
\par Equivalently, from \cite[III.9]{faltchai} one can deduce a description of the KS map in terms of the universal vector extension. This and an argument using Grothendieck-Messing theory, as explained for instance in \cite[pp.~116-118]{gbtmontreal}, also gives a proof of the proposition: see \cite[Prop.~2.1.5]{bellaiche} for the case $n=3$, which can be easily generalised to our setting. See also the proof of \ref{propmoreonKScharp}.
\end{proof}

\section{Ekedahl-Oort stratification}
We refer to \cite{EOVW} and \cite{boxer} for the general theory of EO stratifications on PEL Shimura varieties and to \cite{wooding} for the unitary case in particular.
\par We will write $S_K$ for the geometric special fibre $\mc{S}_K \times \spec(\mb{F})$. By \ref{Levidecomp}, $\mathbf{G}_{\mb{F}} \cong \mr{GL}_n \times \mb{G}_m$ and we can take $T \subset B \subset \mathbf{G}_{\mb{F}}$ to be the diagonal torus and Borel of upper-triangular matrices, respectively, determined by this isomorphism. Let $W$ be the Weyl group of $\mathbf{G}_{\mb{F}}$ with respect to $T$, with set of simple roots $\Delta$ determined by $B$. Write $[s(1), s(2), \dots, s(g)]$ for the permutation of $g$ elements acting as $i \mapsto s(i)$. We can identify $W$  with
\[
    \{(w_\sigma, w_{\overline{\sigma}}) \in \mr{S}_n \times \mr{S}_n \mid w_0 w_{\overline{\sigma}} w_0 = w_\sigma \}
\]
where $w_0 = [n, n-1, \dots, 2, 1]$ is the longest element of $\mr{S}_n$, the symmetric group of order $n$, with length given with respect to its simple reflections $\{s_i = (i, i+1),  1 \leq i \leq n-1\}$. The projection $W \to \mr{S}_n$ onto the first factor induces an isomorphism of Coxeter groups, so that
\[
    \Delta = \{(s_i, w_0 s_i w_0) \in \mr{S}_n \times \mr{S}_n\}.
\]
We consider the parabolic subgroup $P \subset \mathbf{G}_{\mb{F}}$, of type $I \subseteq \Delta$, defined as the stabiliser of the flag $\ker F \subset \mb{D}(G)$, where $\mb{D}(G)$ is the (contravariant) Dieudonn\'e module of $G$, any principally quasi-polarised \btone\ over $\mb{F}$ with $\mc{O}_E$-action, height $2n$ and CM dimensions $d_\sigma = n-1, d_{\overline{\sigma}}=1$. Denote by $\ul{\mr{BT}}_{1, \mb{F}}^{(n-1,1)}$ the set of the isomorphism classes of all such \btone's. Write ${}^I W$ for the set of minimal length representatives of the quotient $W_I \backslash W$, where $W_I$ is the Weyl group of $P$. One can show that $P$ does not depend on the choice of $G \in \ul{\mr{BT}}_{1, \mb{F}}^{(n-1,1)}$ and in fact we have the following.
\begin{lemma}[{\cite[3.1.2, 3.4.1]{wooding}}]
The Levi factor of $P$ is the $H_\mathbb{F}$ from \ref{Levidecomp} and $I = \Delta \setminus \{s_{n-1}\}$.
\par Moreover, ${}^I W$ is given by the set of (inverse) shuffles
\[
    \{w \in \mr{S}_n \mid w^{-1}(1) < w^{-1}(2) < \cdots < w^{-1}(n-1)\}
\]
which has cardinality $n$. For $w \in {}^I W$ the length is given by 
\[
l(w) = \sum_{1 \leq i \leq n-1} w^{-1}(i) - i.
\]
The order induced on ${}^I W$ by the length function is total and it coincides with the Bruhat and Ekedahl-Oort orders.
\end{lemma}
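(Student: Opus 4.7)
The proof decomposes naturally into three stages, and I would follow the argument in \cite{wooding}. \textbf{Stage 1: identification of $P$.} Any $G \in \ul{\mr{BT}}_{1,k}^{(n-1,1)}$ has Dieudonn\'e module $\mb{D}(G)$ of total dimension $2n$ with an action of $\Oe \otimes \mb{F} \cong \mb{F} \times \mb{F}$, inducing the CM decomposition $\mb{D}(G) = \mb{D}(G)_\sigma \oplus \mb{D}(G)_{\overline{\sigma}}$ into two $n$-dimensional pieces. Since $p$ is split in $E$, Frobenius fixes the embeddings $\{\sigma, \overline{\sigma}\}$, so $F$ respects this decomposition and $\ker F = \ker F_\sigma \oplus \ker F_{\overline{\sigma}}$. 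Combining the \btone\ identity $\ker F = \im(V)$ with the CM dimensions $d_\sigma = n-1$, $d_{\overline{\sigma}} = 1$ of $\omega_G$, a short Dieudonn\'e-theoretic computation gives $\dim \ker F_\sigma = n-1$ and $\dim \ker F_{\overline{\sigma}} = 1$, independently of the choice of $G$. Under the isomorphism $\mathbf{G}_\mb{F} \cong \mr{GL}_n \times \mb{G}_m$ of Lemma \ref{Levidecomp}, in which the $\mr{GL}_n$ factor acts through its standard representation on $\mb{D}(G)_\sigma$, the stabiliser of the resulting hyperplane $\ker F_\sigma$ is precisely the maximal parabolic of type $I = \Delta \setminus \{s_{n-1}\}$, whose Levi is $H_\mb{F}$.

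\textbf{Stage 2: description of ${}^I W$.} Under the identification $W \cong \mr{S}_n$ the subgroup $W_I$ is the copy of $\mr{S}_{n-1}$ fixing $n$. The standard characterisation of minimal-length left coset representatives says $w \in {}^I W$ iff $l(s_i w) > l(w)$ for every $s_i \in I$, and applying the classical criterion $l(s_i w) > l(w) \iff w^{-1}(i) < w^{-1}(i+1)$ for $i = 1, \dots, n-2$ translates this directly into the inverse shuffle inequalities of the lemma. Such an inverse shuffle is uniquely determined by $r \coloneqq w^{-1}(n) \in \{1, \dots, n\}$, so $|{}^I W| = n$. Writing $w_r$ for the corresponding element, one has $w_r = [1, 2, \dots, r-1, n, r, r+1, \dots, n-1]$ in one-line notation, whose inversions are exactly the pairs $(r, j)$ with $r < j \leq n$, giving $l(w_r) = n - r$; substituting the computed values of $w_r^{-1}(i)$ into $\sum_{i=1}^{n-1}(w^{-1}(i) - i)$ verifies the sum formula.

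\textbf{Stage 3: comparison of orders.} Because the lengths $\{l(w_r) : 1 \leq r \leq n\}$ exhaust $\{0, 1, \dots, n-1\}$, the length function is a bijection from ${}^I W$ to $\{0, \dots, n-1\}$ and the induced order is total. The Bruhat order on ${}^I W$ is always refined by the length order, and on a finite totally length-graded set it must therefore agree with it. Finally, the coincidence with the Ekedahl-Oort order is the main theorem of \cite{wooding} in this PEL setup: the EO strata of $S_K$ are parametrised by ${}^I W$ via the classifying map attaching to a geometric point the isomorphism class of its $p$-torsion together with the flag $\ker F \subset \mb{D}(G)$, and the closure order matches Bruhat.

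The main obstacle is Stage 1: one must carefully track the semilinear conventions of Dieudonn\'e theory in order to verify that $\ker F_\sigma$ truly has codimension one in $\mb{D}(G)_\sigma$, and to justify that the $\mathbf{G}_\mb{F}$-action on $\mb{D}(G)$ is faithfully captured by its action on the $\sigma$-component, so that the parabolic type can be read off from the $\mr{GL}_n$ factor alone. Once Stage 1 is settled, Stages 2 and 3 reduce to routine combinatorics in $\mr{S}_n$ together with the cited identification of Bruhat and Ekedahl-Oort orders.
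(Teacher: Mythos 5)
The paper states this lemma as a citation from \cite{wooding} without giving its own proof; what follows in the text is merely the explicit description of the elements $w_r$ of ${}^I W$ in one-line notation together with the length computation $l(w_r) = n-r$, which your Stage 2 reproduces faithfully. Your reconstruction is correct: Stage 1 correctly identifies $\ker F_\sigma$ as a hyperplane in $\mb{D}(G)_\sigma$ (this is visible in the paper from the short exact sequences used in Lemma \ref{lemtorsiondelta} and from Proposition \ref{propEOunit}, where $D[F]$ omits exactly $e_r$ from the $\sigma$-basis), Stage 2 is the standard criterion $l(s_i w) > l(w) \iff w^{-1}(i) < w^{-1}(i+1)$ applied to $I = \{s_1,\dots,s_{n-2}\}$ correctly yielding the inverse-shuffle description and the inversion count, and Stage 3 uses that ${}^I W$ is graded by length under Bruhat and that the length function is injective here, so the Hasse diagram is a chain. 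The only point I would tighten in Stage 3 is to invoke explicitly that the parabolic quotient ${}^I W$ with Bruhat order is a graded poset with rank function $l$, so a maximal chain must pass through every rank, and injectivity of $l$ then forces the chain to exhaust ${}^I W$; "refinement plus total length order" alone is slightly short of this. Your own flag in Stage 1 — that one must justify reading the parabolic type from the $\mr{GL}_n$ factor alone — is the right thing to worry about, and it is settled by the observation that under the perfect pairing between $H_\sigma$ and $H_{\overline\sigma}$ one has $\ker F_{\overline\sigma} = (\ker F_\sigma)^\perp$, so stabilising one determines stabilising the other.
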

We can describe the elements of ${}^I W$ explicitly. In fact, $w \in \mr{S}_n$ satisfies the condition that $w^{-1}(1) < w^{-1}(2) < \cdots < w^{-1}(n-1)$ if and only if it is of the form
\[
    w_{r} = \left( \begin{array}{cccccccc}
        1 & 2 & \cdots & r-1 & r & r+1 & \cdots & n \\
        1 & 2 & \cdots & r-1 & n & r & \cdots & n-1
    \end{array} \right).
\]
In particular, $w_{r}^{-1} = [1, 2, \dots, r-1, r+1, \dots, n, r]$, so $l(w_{r}) = n-r$. Moreover, $w_0 w_{r} w_0$ is a shuffle of the form $w^{-1}(2) < w^{-1}(3) < \cdots < w^{-1}(n)$. The total order $\leq$ on these elements is $w_{1} \geq w_{2} \geq \cdots \geq w_{n} = \mr{id}$.
\par For each $G \in \ul{\mr{BT}}_{1, \mb{F}}^{(n-1,1)}$ we also have a flag $D_\bullet$ in $\mb{D}(G)$ coming from the canonical filtration of $G$. Denote by $P_{G} \subset \mathbf{G}_{\mb{F}}$ the parabolic fixing this flag and write $w(G) = w(P, P_G) \in {}^IW$ for the relative position of the two parabolic subgroups $P$ and $P_G$. Then we have the following.
\begin{theorem}[{\cite[Thm.~6.7]{gsas}, \cite[4.3.2, 4.2.8-10-18]{boxer}}]
\label{thmEO}
We have the following:
\begin{enumerate}
    \item There is a bijection
        \begin{align*}
            \ul{\mr{BT}}_{1, \mb{F}}^{(n-1,1)} &\longrightarrow {}^I W,\\
            G &\longmapsto w(G). 
        \end{align*}
        Moreover, $w(G)$ is determined by the permutation associated to the canonical filtration of $G$ in \cite[4.2.4]{boxer} together with the CM dimensions of the graded pieces $\mr{gr}_i(G)$ of that filtration.
    \item There is a decomposition of $S_K$ into a disjoint union of reduced locally closed subschemes as
        \[
            S_K = \bigsqcup_{w \in {}^I W} S_{K, w}
        \]
        such that:
        \begin{enumerate}
            \item For $k/\mb{F}$ algebraically closed, and $x \in S_{K}(k)$, then $x \in S_{K, w}(k)$ if and only if $w = w(A_x[p])$.
            \item For each $w \in {}^I W$ the subscheme $S_{K, w}$ is non-empty, equidimensional of dimension $l(w)$, smooth and quasi-affine.
            \item We have the closure relation
                \[
                 \overline{S}_{K, w_r} = {S}_{K, w_r} \sqcup \overline{S}_{K, w_{r+1}} = {S}_{K, w_r} \sqcup {S}_{K, w_{r+1}} \sqcup \cdots \sqcup S_{K, w_n},
                \]
                for all $1 \leq r \leq n-1$.
        \end{enumerate}
\end{enumerate}
\end{theorem}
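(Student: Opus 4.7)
The plan is to establish the theorem in two steps, following the strategy of Boxer and Goren-Oort. The unifying tool is the theory of $G$-zips of Pink-Wedhorn-Ziegler, which translates questions about the EO stratification into questions about Bruhat-type stratifications on a purely group-theoretic stack.

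For part (1), I would use contravariant Dieudonné theory to translate the problem to a linear algebra classification. A principally quasi-polarised \btone{} $G$ over $\mb{F}$ with $\mc{O}_E$-action of signature $(n-1,1)$ corresponds to a Dieudonné module $\mb{D}(G)$ equipped with $F$, $V$, the $\mc{O}_E$-action and a compatible pairing. The canonical filtration $D_\bullet$ on $\mb{D}(G)$ defined as in \cite[4.2.4]{boxer} produces a parabolic $P_G$, and the relative position $w(G) = w(P, P_G) \in {}^IW$ is well-defined. Injectivity of $G \mapsto w(G)$ reduces to showing that one can reconstruct $\mb{D}(G)$ up to isomorphism from the combinatorial data of its canonical filtration together with $F$, $V$; this is done by an explicit case analysis on ${}^IW$, which by the preceding lemma consists of just $n$ elements $w_r$ described explicitly by the shuffles $[1,\dots,r-1,r+1,\dots,n,r]$. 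Surjectivity is established by constructing, for each $w_r$, an explicit Dieudonné module realising the type $w_r$; in the signature $(n-1,1)$ setting this can be done directly since $\mc{L} = \ul{\omega}_{\overline{\sigma}}$ is a line and all the combinatorial complexity is concentrated in the action of $V$ on a single $\sigma$-component.

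For part (2), I would define $S_{K,w}$ set-theoretically as the locus where the universal truncated $p$-divisible group $A[p]$ has type $w$, and then use the smooth morphism of stacks
\[
    \zeta \colon S_K \longrightarrow [E_G \backslash G_{\mb{F}}]
\]
into the stack of $G$-zips of type $\mu$ (here $\mu$ is the Hodge cocharacter attached to our datum and $E_G$ the zip group). The existence and smoothness of $\zeta$ in the PEL case is due to Zhang, and its construction uses the Hodge filtration on $H^1_\mr{dR}(A/S)$ together with the Frobenius and Verschiebung. The target stack $[E_G\backslash G_{\mb{F}}]$ carries a well-understood stratification indexed by ${}^IW$, inherited from the Bruhat stratification on $G/P$ via the standard presentation of the zip group; each stratum is smooth, locally closed, equidimensional of the expected dimension $l(w)$, quasi-affine, and the closure relations are governed by the Bruhat order on ${}^IW$ (which, by the lemma, coincides with the total order $\leq$ on our $\{w_r\}$). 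Pulling back along the smooth $\zeta$ and using the dimension formula $\dim S_K = n-1 = \dim[E_G\backslash G_\mb{F}] + l(w_1)$ translates each of these properties into the corresponding statement for $S_{K,w}$.

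The non-emptiness of $S_{K,w_r}$ for every $r$ needs to be checked separately and is the place where part (1) feeds into part (2): given the explicit \btone{} realising $w_r$ constructed in part (1), one produces a geometric point of $S_K$ whose $A[p]$ has that type, using deformation theory and the fact that the moduli problem $\mc{S}_K$ is representable (the required abelian variety can, for instance, be built by CM theory from an appropriate CM type). The main obstacle I anticipate is establishing the smoothness of $\zeta$ with the correct relative dimension: it is this smoothness that simultaneously delivers smoothness, equidimensionality, and the dimension count for each $S_{K,w_r}$, and it requires the full strength of Grothendieck-Messing/Serre-Tate deformation theory applied to the $G$-zip datum attached to the universal abelian scheme, together with a careful comparison between the local structure of $S_K$ at a point $x$ and the local structure of $[E_G\backslash G_\mb{F}]$ at $\zeta(x)$.
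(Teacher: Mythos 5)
Note first that the paper does not prove Theorem~\ref{thmEO}; it cites it to \cite{gsas} and \cite{boxer}, and your sketch reconstructs essentially the approach taken there (explicit classification of standard Dieudonn\'e modules for part~(1), and the $G$-zip period morphism of Pink--Wedhorn--Ziegler and Zhang for part~(2)). One bookkeeping slip worth flagging: the zip stack $[E_G\backslash G_{\mb{F}}]$ has dimension $0$, so its stratum indexed by $w$ has dimension $l(w)-\dim(G/P)$, not $l(w)$; you recover $\dim S_{K,w}=l(w)$ only after adding the relative dimension $\dim(G/P)=n-1$ of the smooth morphism $\zeta$, which is consistent with (and subsumed by) the dimension identity you wrote, but inconsistent with your earlier claim about the stack strata themselves.
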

In particular, we must have $S_K = \overline{S}_{K, w_1} = S_{K, w_1} \sqcup \overline{S}_{K, w_2}$, so that $S_{K, w_1}$ is a dense open, called the \emph{ordinary locus}, of dimension $n-1$, which is the dimension of the smooth quasi-projective $\mb{F}$-scheme $S_K$. We will sometimes denote the ordinary locus $S_{K, w_1}$ by $S_K^\mu$. At the other end we have the zero-dimensional closed stratum $S_{K, w_n}$, which we call the \emph{core locus}. We will also call $S_{K, w_2}$ the \emph{almost ordinary locus} and $S_{K, w_{n-1}}$ the \emph{almost core locus}.

\subsection{The standard Dieudonn\'e modules}

Following \cite[4.5]{wooding}, for given a point $\ul{A} \in S_{K, w_r}(k)$, where $k/\mb{F}$ is an algebraically closed extension and $1 \leq r \leq n$, we can describe the structure of the Dieudonn\'e module $D = \mb{D}(A[p])$ using standard objects, defined in \cite[4.9]{gsas}. In fact, from \cite[Thm.~4.7]{gsas} we deduce the following.

\begin{proposition}
\label{propEOunit}
Let $D = D_\sigma \oplus D_{\overline{\sigma}}$ be the CM decomposition of $D$. There are $k$-bases $\{e_i\}_{1 \leq i \leq n}$, $\{f_i\}_{1 \leq i \leq n}$ of $D_\sigma$, $D_{\overline{\sigma}}$, respectively, such that $\left<e_i, f_j\right> = \delta_{ij}$ and the semilinear Frobenius $F \colon D \to D$ and Verschiebung $V \colon D \to D$ are described as follows:
    \begin{align} 
        & F(e_i) = \left \{ \begin{array}{cc}
             0 & \text{ if } i\neq r, \\
             e_1 & \text{ if } i=r,
        \end{array} \right.
        & F(f_i) = \left \{ \begin{array}{cc}
             f_{i+1} & \text{ if } 1 \leq i \leq r-1, \\
             0 & \text{ if } i=r, \\
             f_{i} & \text{ if } r+1 \leq i \leq n,
        \end{array} \right.\\
        & V(e_i) = \left \{ \begin{array}{cc}
             0 & \text{ if } i=1, \\
             e_{i-1} & \text{ if } 2 \leq i \leq r, \\
             e_{i} & \text{ if } r+1 \leq i  \leq n,
        \end{array} \right.
        & V(f_i) = \left \{ \begin{array}{cc}
             f_r & \text{ if } i=1, \\
             0 & \text{ otherwise}.
        \end{array} \right.
    \end{align}
    In particular, we have
    \begin{align*}
        D[F] = \ker F &= \mr{Span}_{k} \left<e_1, \dots, e_{r-1}, f_r, e_{r+1}, \dots, e_n\right>,\\
        D[V] = \ker V &= \mr{Span}_{k} \left<e_1, f_2,  \dots, f_n\right>
    \end{align*}
    and:
    \begin{enumerate}
        \item if $r=1$, the $p$-rank of $A[p]$ is $n$ and that of $A[p]_{\overline{\sigma}}$ is $n-1$,
        \item if $r>1$, the $p$-rank of $A[p]$ and $A[p]_{\overline{\sigma}}$ is $n-r$.
    \end{enumerate}
    In particular, the EO stratification coincides with the $p$-rank stratification for the $\overline{\sigma}$-component of the $p$-torsion.
\end{proposition}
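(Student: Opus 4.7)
The plan is to deduce this from the classification of Ekedahl--Oort strata (Theorem \ref{thmEO}) combined with the explicit models for standard Dieudonn\'e modules worked out in \cite[4.9]{gsas} and \cite[4.5]{wooding}. Since Theorem \ref{thmEO} asserts that the isomorphism class of $A[p]$ as a principally polarized \btone{} with $\Oe$-action and signature $(n-1,1)$ is entirely determined by $w_r$, it is enough to exhibit, for each $w_r$, an explicit module of the form stated in the proposition and to verify that this module actually represents $w_r$.

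The construction is straightforward. Fix a basis $\{e_i\}$ of $D_\sigma$ and let $\{f_j\}$ be the dual basis of $D_{\overline{\sigma}}$ under the polarization pairing (the Rosati relation forces this pairing to identify $D_{\overline\sigma}$ with $D_\sigma^\vee$, so such a dual basis exists and is unique), and define $F, V$ by the displayed formulas. The verifications to carry out are: (i) $FV=VF=0$ on every basis vector; (ii) $\ker F = \im V$ and $\ker V = \im F$, so that the module is indeed a \btone; (iii) $\dim_k \im(V|_{D_\sigma}) = n-1$ and $\dim_k \im(V|_{D_{\overline\sigma}}) = 1$, matching the CM dimensions $(n-1,1)$; (iv) the principal-polarization compatibility $\langle Fx,y\rangle = \langle x, Vy\rangle^{\sigma^{-1}}$ for $x, y$ basis vectors, which reduces to checking that the rule defining $F$ on the $e_i$ is adjoint to the rule defining $V$ on the $f_j$, and vice versa. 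Each of these is a routine case-by-case verification.

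The main obstacle is the identification step: showing that this standard module in fact represents the stratum $w_r$, i.e., that its canonical filtration (in the sense of \cite[4.2.4]{boxer}) together with the CM dimensions of the graded pieces realizes the shuffle $w_r^{-1} = [1, 2, \ldots, r-1, r+1, \ldots, n, r]$ appearing in Theorem \ref{thmEO}. Concretely, one runs the canonical-filtration algorithm on the explicit module: starting from $0 \subset \ker V \subset D$, one saturates iteratively under $F$, $V$ and preimages thereof until the flag stabilizes; the resulting graded pieces are one-dimensional, and reading off the relations between the distinguished basis vectors produces exactly the permutation $w_r$ with the correct CM types. The arguments of \cite[\S4]{gsas} and \cite[\S4]{wooding} perform this check in an abstract framework; the task here is simply to align their normalizations with the bases chosen in the proposition.

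Finally, the descriptions of $D[F]$ and $D[V]$ are read off directly from the formulas. For the $p$-rank statements, recall that under contravariant Dieudonn\'e theory the \'etale part of a \btone{} $G$ corresponds to the maximal subspace of $\mb{D}(G)$ on which $V$ acts bijectively, and, because of the duality induced by the Rosati pairing, the $\overline{\sigma}$-component of $A[p]$ corresponds to the $\sigma$-component of $D$ (and vice versa). A direct inspection shows that the $V$-bijective subspace of $D_\sigma$ always has dimension $n-r$ (the span of $e_{r+1},\ldots,e_n$, where $V$ is the identity), whereas the $V$-bijective subspace of $D_{\overline{\sigma}}$ has dimension $1$ when $r=1$ (spanned by $f_1$, since then $V(f_1)=f_1$) and $0$ when $r>1$ (since then $V(f_1)=f_r$ and $V(f_r)=0$, so $V$ is nilpotent on $D_{\overline\sigma}$). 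Combining these contributions yields the claimed $p$-ranks of $A[p]$ and of $A[p]_{\overline{\sigma}}$, and exhibits the coincidence of the EO stratification with the $p$-rank stratification on the $\overline{\sigma}$-component.
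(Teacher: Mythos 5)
Your overall strategy matches the paper's, which simply appeals to the standard Dieudonn\'e module calculations of \cite[Thm.~4.7, 4.9]{gsas} and \cite[4.5]{wooding}: exhibit an explicit \btone\ model with the stated $F$, $V$ matrices, verify the compatibility axioms, and identify its relative position $w_r$ via the canonical filtration algorithm. The verifications (i)--(iv) you list are the correct checks, and the extraction of $D[F]$ and $D[V]$ is routine.

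However, your $p$-rank argument contains a pair of mistakes that happen to cancel. Under the paper's convention (contravariant Dieudonn\'e theory, with $\ul{\omega}_{A_s/k(s)} \cong D[F]$ as stated just before the proposition), the \'etale part of a \btone\ corresponds to the maximal subspace on which $F$, not $V$, is bijective: for $G$ \'etale, $\ul{\omega}_G = 0$, so $D[F]=0$ and $F$ is injective, hence bijective. Also, the Dieudonn\'e functor is $\Oe$-linear (the commutativity of $\Oe$ means contravariance does not flip the eigenspace decomposition), so $\mb{D}(A[p]_{\overline\sigma}) = D_{\overline\sigma}$; there is no swap of CM components unless one first dualises via the polarisation, which is not what the statement's decomposition $D = D_\sigma \oplus D_{\overline\sigma}$ refers to. Your two incorrect claims cancel because $F$ on $D_{\overline\sigma}$ is adjoint to $V$ on $D_\sigma$ under the pairing $\langle \cdot,\cdot\rangle$, so the $F$-bijective part of $D_{\overline\sigma}$ and the $V$-bijective part of $D_\sigma$ have the same dimension; the final numbers are therefore right. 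The clean way to write this is to directly compute: $F$ acts bijectively on $\mr{Span}(f_{r+1},\dots,f_n)$ (dimension $n-r$) in $D_{\overline\sigma}$, plus $\mr{Span}(e_1)$ (dimension $1$) in $D_\sigma$ when $r=1$, and to observe that $F$ is nilpotent on the complementary spans. You should also correct the adjunction sign: the compatibility is $\langle Fx,y\rangle = \langle x,Vy\rangle^{\sigma}$, not $\sigma^{-1}$, as one sees by testing semilinearity in the first variable.
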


\subsection{Partial Hasse invariants}
\label{parHinv}
We are interested in the action of $V$ on $\ul{\omega}_{A_s/k(s)} \cong D[F]$ for $\ul{A}_s \in S_{K, w_r}(k)$, $D=\mb{D}(A_s[p])$, $k/\mb{F}$ algebraically closed and $1 \leq r \leq n-1$. In particular, from \ref{propEOunit}, one can see that the rank function
\begin{align*}
    \lvert \overline{S}_{K, w_r} \rvert & \longrightarrow \Z_{\geq 0}, \\
    s &\longmapsto \mr{rk}(\ker \, V^{r-1} \colon \ul{\omega}_{A_s/k(s), \sigma} \to \ul{\omega}_{A_s/k(s), \sigma}^{(p^{r-1})}) 
\end{align*}
is constant and equal to $r-1$. Therefore, the sheaf
\[
    \ul{\omega}_{0, r} \coloneqq \ker( V^{r-1} \colon \ul{\omega}_{A/\overline{S}_{K, w_r}, \sigma} \to \ul{\omega}_{A/\overline{S}_{K, w_r}, \sigma}^{(p^{r-1})})
\]
is locally free of rank $r-1$ on $\overline{S}_{K, w_r}$. Moreover, the quotient sheaf $\ul{\omega}_{\mu, r} \coloneqq \ul{\omega}_{\sigma}/\ul{\omega}_{0, r}$ is also finite locally free, of rank $n-r$, over $\overline{S}_{K, w_r}$. Over $S_{K, w_r}$, $\ul{\omega}_{0, r}$ and $\ul{\omega}_{\mu, r}$ correspond to the local-local and multiplicative part of the canonical filtration of the $p$-torsion of $A$, hence the names. In particular, the quotient morphism $V \colon \ul{\omega}_{\mu, r} \to \ul{\omega}_{\mu, r}^{(p)}$ is an isomorphism over the dense open ${S}_{K, w_r} \subseteq \overline{S}_{K, w_r}$ and has a kernel of rank $1$ over the boundary $\partial \overline{S}_{K, w_r} = \overline{S}_{K, w_r} \setminus {S}_{K, w_r}$. Therefore, if we consider the corresponding section $A_r = \det V \in H^0(\overline{S}_{K, w_r}, \det \ul{\omega}_{\mu, r}^{p-1})$, we have the following.
\begin{lemma}
The section $A_r$ is nowhere vanishing on ${S}_{K, w_r}$ and zero everywhere on the boundary $\partial \overline{S}_{K, w_r}$. Moreover, the construction of $A_r$ is independent of the level $K$.
\end{lemma}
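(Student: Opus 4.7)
The plan is to prove the two vanishing claims by a direct pointwise computation on the standard Dieudonn\'e modules of Proposition \ref{propEOunit}, and to deduce level-independence from the purely geometric nature of the construction. I will fix a geometric point $s \in \overline{S}_{K, w_r}(k)$, necessarily lying in $S_{K, w_{r'}}(k)$ for some $r' \geq r$, and work with the bases $\{e_i\}, \{f_i\}$ of $D_\sigma, D_{\overline{\sigma}}$ provided by Proposition \ref{propEOunit} (with the integer denoted $r$ there playing the role of the ambient $r'$). Under the resulting identification $\ul{\omega}_{A_s/k(s), \sigma} \cong D[F]_\sigma = \mr{Span}\langle e_i : i \neq r'\rangle$, a direct iteration of the formulas for $V$ shows that $V^{r-1}(e_i) = 0$ for $1 \leq i \leq r-1$ (each such $e_i$ telescopes down through $e_1$, which is killed by $V$), while $V^{r-1}(e_i) \neq 0$ for every remaining index $r \leq i \leq n$ with $i \neq r'$. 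Consequently $\ul{\omega}_{0, r, s} = \mr{Span}\langle e_1, \dots, e_{r-1}\rangle$ has rank exactly $r-1$, confirming the constant-rank claim of the preamble and guaranteeing that $\ul{\omega}_{0, r}$ and $\ul{\omega}_{\mu, r}$ are genuine sub- and quotient bundles on the reduced scheme $\overline{S}_{K, w_r}$.

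With the fibre $\ul{\omega}_{\mu, r, s}$ thus explicitly identified, the value of $A_r(s) = \det V(s)$ can be read off immediately. In the generic case $r' = r$, the fibre $\ul{\omega}_{\mu, r, s}$ has basis $\{[e_{r+1}], \dots, [e_n]\}$ and $V$ acts on each basis vector as the identity, so the induced map $V \colon \ul{\omega}_{\mu, r, s} \to \ul{\omega}_{\mu, r, s}^{(p)}$ is an isomorphism and $A_r(s) \neq 0$. In the boundary case $r' > r$, the basis instead reads $\{[e_r], [e_{r+1}], \dots, \widehat{[e_{r'}]}, \dots, [e_n]\}$; here $V(e_r) = e_{r-1}$ lies in $\ul{\omega}_{0, r, s}$, so the class $[e_r]$ is sent to zero in $\ul{\omega}_{\mu, r, s}^{(p)}$, producing a zero column in the matrix of $V$ and forcing $A_r(s) = 0$. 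This settles both the non-vanishing of $A_r$ on $S_{K, w_r}$ and its vanishing on $\partial \overline{S}_{K, w_r}$.

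For the level-independence claim, the plan is simply to observe that $\ul{\omega}_\sigma$ and the Verschiebung $V \colon \ul{\omega}_\sigma \to \ul{\omega}_\sigma^{(p)}$ are built out of the triple $(A, \iota, \lambda)$ alone, with no reference to the level structure $\eta_K$, and are manifestly compatible with pullback along the transition maps $\mc{S}_{K'} \to \mc{S}_{K''}$ coming from shrinking level. The kernel $\ul{\omega}_{0, r}$, the quotient $\ul{\omega}_{\mu, r}$, the induced map $V \colon \ul{\omega}_{\mu, r} \to \ul{\omega}_{\mu, r}^{(p)}$, and finally its determinant $A_r$ therefore all pull back functorially. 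The one delicate point anywhere in the argument is ensuring that $\ul{\omega}_{0, r}$ is globally a sub-bundle of $\ul{\omega}_\sigma$ across the full closure $\overline{S}_{K, w_r}$ rather than only on the dense open stratum $S_{K, w_r}$; the uniform pointwise computation carried out above, together with the reducedness of $\overline{S}_{K, w_r}$ inherited from Theorem \ref{thmEO}, removes exactly this obstacle.
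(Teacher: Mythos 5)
Your proof is correct and follows the same strategy the paper implicitly invokes: the paper gives no separate proof for this lemma, instead asserting in the preceding paragraph — by appeal to Proposition \ref{propEOunit} — that the rank of $\ker V^{r-1}$ is constant on $\overline{S}_{K, w_r}$ and that $V\colon\ul{\omega}_{\mu,r}\to\ul{\omega}_{\mu,r}^{(p)}$ is an isomorphism on the open stratum and has a rank-one kernel on the boundary. You carry out exactly the fibrewise computation in the standard Dieudonn\'e module that supports those assertions, and your reduction of level-independence to the fact that $V$, $\ul\omega_\sigma$ and the kernel/quotient sheaves are built only from $(A,\iota)$ and are compatible with the transition maps is also the intended argument.

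One small notational slip: for $r=1$ the displayed formula $V(e_r)=e_{r-1}$ is not literally what Proposition \ref{propEOunit} gives (there $V(e_1)=0$, and there is no $e_0$). The conclusion of course still holds, since $V(e_1)=0$ trivially lies in $\ul\omega_{0,1,s}=0$ and again produces a zero column; it would just be cleaner to treat the case $r=1$ with $V(e_1)=0$ directly, or to note explicitly that your telescoping formula degenerates to the trivial identity $V^0=\mathrm{id}$ there. Everything else — the identification $\ul\omega_{0,r,s}=\mr{Span}\langle e_1,\dots,e_{r-1}\rangle$, the description of the quotient basis, the appeal to reducedness of $\overline S_{K,w_r}$ to upgrade constant fibre rank to local freeness — is correct and well organised.
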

We call the $A_r$'s \emph{(generalised) partial Hasse invariants}. They are factors of the generalised Hasse invariants of \cite{boxer}. 

\subsection{Smoothness of closures in the EO stratification}
We establish a geometric property of the KS morphism on lower EO strata. Consider $2 \leq r \leq n-1$. We will show in particular that $\overline{S}_{K, w_r}$ is smooth. We will write
\[
    \nabla_r \colon H^1_\mr{dR}(A/\overline{S}_{K, w_r}) \longrightarrow H^1_\mr{dR}(A/\overline{S}_{K, w_r}) \otimes \Omega^1_{\overline{S}_{K, w_r}/\mb{F}}
\]
to denote the Gauss-Manin connection relative to $A/\overline{S}_{K, w_r}/\mb{F}$.
We will use the same notation for the restriction of $\nabla_r$ to $S_{K, w_r} \subseteq \overline{S}_{K, w_r}$. Recall that $\nabla_r$ is related to to $\nabla \colon H^1_\mr{dR}(A/S_K) \to H^1_\mr{dR}(A/S_K) \otimes \Omega^1_{S_K/\mb{F}}$ via base change, see \ref{lemGMbasechange}. We will write $u \colon S_{K, w_r} \to S_{K}$ and $\overline{u} \colon \overline{S}_{K, w_r} \to S_K$ for the natural immersions. To ease notations, we set $\tilde{\delta} = \det \ul{\omega}_{A/S, \sigma} \otimes \delta_{\sigma}^{-1}$, with $S$ depending on the context. We also write $\mc{C}_{\overline{S}_{K, w_r}/S_K}$ to denote the {conormal sheaf} of $\overline{S}_{K, w_r}$ in $S_K$.
\begin{proposition}
\label{propmoreonKScharp}
We have a commutative diagram of short exact sequences
\begin{equation}
\label{lowerEOKSiso}
    \begin{tikzcd}
        &0 \arrow[r] & \ul{\omega}_{0, r} \otimes \tilde{\delta}\arrow[r] \arrow[d] &\ul{\omega}_{A/\overline{S}_{K, w_r}, \sigma} \otimes \tilde{\delta} \arrow[r] \arrow[d, "\overline{u}^\ast(\ul{\mr{ks}})"] &\ul{\omega}_{\mu, r} \otimes \tilde{\delta} \arrow[r] \arrow[d, "\ul{\mr{ks}}_{\mu, r}"] &0\\
        &0 \arrow[r] &\mc{C}_{\overline{S}_{K, w_r}/S_K} \arrow[r] &\overline{u}^\ast(\Omega^1_{S_{K}/\mb{F}}) \arrow[r] & \Omega^1_{\overline{S}_{K, w_r}/\mb{F}} \arrow[r] &0,
    \end{tikzcd}
\end{equation}
whose vertical arrows are isomorphisms. In particular, $\overline{S}_{K, w_r}$ is smooth and
\[
    \nabla_r(\ul{\omega}_0) \subseteq \ul{\omega}_0 \otimes \Omega^1_{\overline{S}_{K, w_r}/\mb{F}}.
\]
\end{proposition}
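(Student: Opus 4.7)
The strategy is to build the diagram from the known Kodaira--Spencer isomorphism, pin down the conormal sheaf by a Grothendieck--Messing computation, and then deduce the Gauss--Manin containment as a consequence. The pullback $\overline{u}^\ast(\ul{\mr{ks}})$ of the isomorphism from Proposition \ref{propksiso} supplies the middle vertical arrow, and the bottom row is the standard conormal right-exact sequence for the immersion $\overline{u}$. What is not automatic is that the left arrow of the bottom row is injective (equivalently, that $\overline{u}$ is a regular embedding) and that the two outer vertical maps exist and are isomorphisms; all three facts will follow from the local claim below.

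The heart of the proof is the following: at every geometric point $s\in S_{K,w_r}$, the image of $\ul{\omega}_{0,r}|_s\otimes\tilde{\delta}|_s$ under $\ul{\mr{ks}}|_s$ generates the conormal fibre of $\overline{S}_{K,w_r}$ at $s$. I would prove this via Grothendieck--Messing deformation theory: writing $R=\mc{O}^{\wedge}_{S_K,s}$ with maximal ideal $\mathfrak{m}$, lifts to $R/\mathfrak{m}^2$ of the Hodge filtration $\ul{\omega}_\sigma|_s\subset H_\sigma|_s$ are parametrised by $\hom_{k(s)}(\ul{\omega}_\sigma|_s,\mc{U}_\sigma|_s)$, and $\ul{\mr{ks}}$ becomes the canonical identification of this space with the dual of $\mathfrak{m}/\mathfrak{m}^2$, up to the twist by $\tilde{\delta}$. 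Using the standard Dieudonn\'e model of Proposition \ref{propEOunit}, in which $\ul{\omega}_{0,r}|_s$ is spanned by $e_1,\dots,e_{r-1}$, one computes that the ideal cutting out $\overline{S}_{K,w_r}$ in the formal neighbourhood of $s$ is generated by the matrix entries of $V^{r-1}$ applied to a lift of $\ul{\omega}_{0,r}$, and these entries correspond under $\ul{\mr{ks}}|_s$ precisely to $\ul{\omega}_{0,r}|_s\otimes\tilde{\delta}|_s$. This is the main technical obstacle; once it is in place the rest of the argument is essentially formal.

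Granted the local claim, the left vertical arrow is an injection between locally free sheaves of equal rank: $\ul{\omega}_{0,r}\otimes\tilde{\delta}$ has rank $r-1$, and the codimension of $\overline{S}_{K,w_r}$ in the smooth $(n-1)$-dimensional scheme $S_K$ is $(n-1)-l(w_r)=r-1$ by Theorem \ref{thmEO}. This forces the injection to be an isomorphism and $\overline{u}$ to be a regular embedding of the expected codimension, so $\overline{S}_{K,w_r}$ is itself smooth. The right vertical arrow is then an isomorphism by the five lemma.

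For the Gauss--Manin assertion, two ingredients suffice. First, since $\nabla$ commutes with $V$ (as recalled in the excerpt just before Section \ref{parHinv}), we have $\nabla_r(K_r)\subseteq K_r\otimes\Omega^1_{\overline{S}_{K,w_r}/\mb{F}}$, where $K_r\coloneqq\ker(V^{r-1}\colon H_\sigma\to H_\sigma^{(p^{r-1})})$, and by construction $\ul{\omega}_0=K_r\cap\ul{\omega}_\sigma$. Second, Proposition \ref{propKSGM} identifies the composition $\ul{\omega}_\sigma\xrightarrow{\nabla}H_\sigma\otimes\Omega^1\twoheadrightarrow\mc{U}_\sigma\otimes\Omega^1$ with $\ul{\mr{ks}}$ up to the canonical twist; restricted to $\ul{\omega}_0$ and pulled back along $\overline{u}$, the diagram just established forces this composition to factor through the image of $\mc{C}_{\overline{S}_{K,w_r}/S_K}$ in $\overline{u}^\ast\Omega^1_{S_K/\mb{F}}$, which vanishes in $\Omega^1_{\overline{S}_{K,w_r}/\mb{F}}$. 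Hence $\nabla_r(\ul{\omega}_0)\subseteq\ul{\omega}_\sigma\otimes\Omega^1$, and intersecting with $K_r\otimes\Omega^1$ (using flatness of $\Omega^1$) gives $\nabla_r(\ul{\omega}_0)\subseteq\ul{\omega}_0\otimes\Omega^1_{\overline{S}_{K,w_r}/\mb{F}}$.
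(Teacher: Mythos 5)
Your proposal follows the paper's strategy closely: a Grothendieck--Messing deformation calculation at points, combined with the codimension count from Theorem \ref{thmEO} to show the left vertical arrow is an isomorphism, and then the observation that the conormal map becomes a locally split injection via the top row, which gives smoothness. The derivation of $\nabla_r(\ul{\omega}_0)\subseteq\ul{\omega}_0\otimes\Omega^1$ by intersecting the two containments $\nabla_r(\ul{\omega}_0)\subseteq K_r\otimes\Omega^1$ and $\nabla_r(\ul{\omega}_0)\subseteq\ul{\omega}_\sigma\otimes\Omega^1$ is the same as the paper's; the paper phrases it more tersely but the content is identical since $\ul{\omega}_{0,r}=K_r\cap\ul{\omega}_\sigma$.

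There is a genuine gap, however: you state and carry out the key local claim only at geometric points $s\in S_{K,w_r}$, the open dense stratum. But the conclusion, smoothness of $\overline{S}_{K,w_r}$, is a statement about \emph{every} point, and precisely the interesting case is a point $s$ lying on the boundary $\partial\overline{S}_{K,w_r}$, where one does not know a priori that the conormal sheaf is locally free or that the conormal sequence is left exact. A density argument suffices to show that the composition $\ul{\omega}_{0,r}\otimes\tilde\delta\to\Omega^1_{\overline{S}_{K,w_r}/\mb{F}}$ vanishes (so the left vertical arrow exists as a sheaf map), but density does not give the surjectivity of $\ul{\omega}_{0,r}|_s\otimes\tilde\delta|_s\to\mc{C}|_s$ at a boundary point, which is what your rank argument needs. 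The paper's proof runs the Grothendieck--Messing analysis at an arbitrary $s\in\overline{S}_{K,w_r}$; this is essential, and it works because the standard Dieudonn\'e models of Proposition \ref{propEOunit} also apply at boundary points (with a larger parameter $r'$), and the rank of $\ker V^{r-1}|_{\ul{\omega}_\sigma}$ is still $r-1$ there, as recorded in \ref{parHinv}. You should replace $S_{K,w_r}$ by $\overline{S}_{K,w_r}$ in the local claim and verify the deformation-theoretic condition at boundary points. A minor, related slip: you write $\mc{U}_\sigma$ where the Hodge quotient $H_\sigma/\ul{\omega}_\sigma$ is meant (both in the parametrisation of lifts and in the identification of $\ul{\mr{ks}}$); these agree via the unit-root splitting on $S_K^\mu$ but not on $\overline{S}_{K,w_r}$ for $r\geq 2$, which is exactly the regime of interest.
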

\begin{proof}
We prove this via Grothendieck-Messing theory. For what we intend to prove the level $K$ is immaterial, so we will drop it from the notation. We will write $S_r$ (resp.\ $\overline{S}_r$)  instead of $S_{K, w_r}$ (resp.\ $\overline{S}_{K, w_r}$). \\
First of all, notice that over $S_r$ we have $\nabla_r(\ul{\omega}_0) \subseteq \ul{\omega}_0 \otimes \Omega^1_{S_r/\mb{F}}$, by \ref{lemGMpullback}. By \ref{lemGMbasechange} and \ref{propKSGM}, this means that over $S_r$ the composition
\[
    \ul{\omega}_0 \otimes \tilde{\delta} \longrightarrow \ul{\omega}_{A/S_r, \sigma} \otimes \tilde{\delta} \overset{\ul{\mr{ks}}}{\longrightarrow} u^\ast(\Omega^1_{S/\mb{F}}) \longrightarrow \Omega^1_{S_r/\mb{F}}
\]
is zero. By density of $S_r \subseteq \overline{S}_r$, we deduce the existence of the commutative diagram \ref{lowerEOKSiso} and the morphism $\ul{\mr{ks}}_{\mu}$. We now show that its vertical arrows are isomorphism and that the second row is a short exact sequence by working at points.\\
Consider a point $s \in \overline{S}_r \subseteq S$ and write $k = k(s)$. 
We want to describe the Zariski tangent spaces 
\[
    T_{\overline{S}_r, s} = T_{\overline{S}_r/\mathbb{F}, s}, \quad T_{S, s} = T_{S/\mathbb{F}, s}.
\]
These are $k$-vector spaces whose elements $v$ correspond to lifts of $s \colon \spec k \to \overline{S}_r$ to $v \colon \spec k[\epsilon] \to \overline{S}_r$ (resp.\ $S$), where $k[\epsilon] \coloneqq k[T]/(T^2)$, that is, morphisms fitting in the following commutative diagram
\[
    \begin{tikzcd}
        & \spec k \arrow[r] \arrow[rr, bend left = 20, "s"] & \spec k[\epsilon] \arrow[r, "v"] & \overline{S}_r \text{ (resp. } S\text{)},
    \end{tikzcd}
\]
where the morphism $\spec k \to \spec k[\epsilon]$ is the $\spec$ over $k$ of $k[\epsilon] \to k, \epsilon \mapsto 0$. By definition of $S$, resp. $\overline{S}_r$, any $v \in T_{S, s}$, resp. $v \in T_{\overline{S}_r, s}$, corresponds to an abelian scheme with PEL structure $\ul{A}_v \in S(k[\epsilon])$, resp. $\ul{A}_v \in \overline{S}_r(k[\epsilon])$, lifting the object $\ul{A} \in \overline{S}_r(k)$ corresponding to $s$. By this we mean that we have a cartesian diagram
\[
    \begin{tikzcd}
        & A \arrow[r] \arrow[d]\arrow[dr, phantom, "\usebox\pullback" , very near start, color=black] & A_v \arrow[d]\\
        & \spec k \arrow[r] & \spec k[\epsilon].
    \end{tikzcd}
\]
Grothendieck-Messing theory tells us that the category of such lifts $\ul{A}_v$ is equivalent to the category of lifts of the $k$-subspace $\ul{\omega} = \ul{\omega}_{A/k}$ of $H = H^1_\mr{dR}(A/k)$ to sub-$k[\epsilon]$-modules $\Tilde{\ul{\omega}}$ of $H[\epsilon] = H \otimes_k k[\epsilon]$, the trivial lift of $H$, such that:
\begin{enumerate}
    \item $\Tilde{\ul{\omega}}$ is a free direct summand of $H[\epsilon]$ of rank $n$.
    \item $\Tilde{\ul{\omega}}$ is $\mc{O}_E$-stable (of type necessarily $(n-1, 1)$).
    \item $\Tilde{\ul{\omega}}$ is maximal isotropic for the perfect alternating pairing on $H[\epsilon]$ induced by the de Rham pairing $\left<\cdot, \cdot\right>^\lambda_\mr{dR}$ on $H$.
\end{enumerate}
Moreover, for $v \in T_{S, s}$, we have that:
\begin{enumerate}
    \setcounter{enumi}{3}
    \item  $v \in T_{\overline{S}_r, s}$ if and only if the lift $\ul{A}_v$ corresponds to $\Tilde{\ul{\omega}}$ such that $\Tilde{\ul{\omega}}_\sigma[V^{r-1}]$ is free of rank $r-1$.
\end{enumerate}
Such a $\Tilde{\ul{\omega}}$ corresponds to a $k$-linear morphism $h \colon \ul{\omega} \to H/\ul{\omega} \cong \ul{\omega}_{A^\vee/k}^\vee \underset{\lambda^\ast}{\cong} \ul{\omega}^\vee$ subject to the conditions:
\begin{enumerate}
    \item The pairing $\left<\cdot, h(\cdot)\right>^\lambda_\mr{dR}$ on $\ul{\omega}$ is symmetric.
    \item $h$ is $\mc{O}_E$-linear. 
\end{enumerate}
The first condition corresponds to $\Tilde{\ul{\omega}}$ being maximal isotropic, the second to the fact that $\Tilde{\ul{\omega}}$ is $\mc{O}_E$-stable. Furthermore, the extra condition that $\Tilde{\ul{\omega}}_\sigma[V^{r-1}]$ is free of rank $r-1$ corresponds to:
\begin{enumerate}
    \setcounter{enumi}{2}
    \item $h$ is identically $0$ on $\ul{\omega}_\sigma[V^{r-1}] \subseteq \ul{\omega}_\sigma$.
\end{enumerate}
Notice that the first two conditions together imply that $h$ is determined by
\[
h_\sigma \colon \ul{\omega}_\sigma \longrightarrow (H/\ul{\omega})_\sigma \cong \ul{\omega}_{A^\vee/k, \sigma}^\vee \underset{\lambda^\ast}{\cong} \ul{\omega}_{\overline{\sigma}}^\vee.
\]
The correspondence between $\tilde{\ul{\omega}}$ and $h$ is given as follows. For any $\Tilde{\ul{\omega}}$ subject to the conditions above and $x \in \ul{\omega}$ we have some $x + \epsilon z \in \Tilde{\ul{\omega}}$, $z \in H $. Then $h(x) = \overline{z} \in \ul{\omega}^\vee$. This is well defined because $\tilde{\ul{\omega}}$ is isotropic. Conversely, given $h$, if for any $x \in \ul{\omega}$ we write $h'(x) \in H$ for some lift of $h(x)$, then $\tilde{\ul{\omega}}$ is the submodule generated by $x + \epsilon(y + h'(x))$, for all the $x, y \in \ul{\omega}$. Therefore, the deformations $T_{S, s}$ of $A$ are in correspondence with $\hom_k(\ul{\omega}_\sigma, \ul{\omega}_{\overline{\sigma}}^\vee) \cong \ul{\omega}_\sigma^\vee \otimes_k \ul{\omega}_{\overline{\sigma}}^\vee$, and $T_{\overline{S}_r, s} \subset T_{S, s}$ corresponds to 
\[
    \hom_k(\ul{\omega}_\mu, \omega_{\overline{\sigma}}^\vee) = \{h \colon \ul{\omega}_\sigma \to \ul{\omega}_{\overline{\sigma}}^\vee \mid h|_{\ul{\omega}_\sigma[V^{r-1}]} = 0\} \subset \hom_k(\ul{\omega}_\sigma, \ul{\omega}_{\overline{\sigma}}^\vee).
\]
In fact, as discussed in \cite[p.~40]{bellaiche}, this correspondence is actually given by isomorphisms of $k$-vector spaces and, as we noted in \ref{propksiso}, the isomorphism $T_{S, s} \to \ul{\omega}_\sigma^\vee \otimes_k \ul{\omega}_{\overline{\sigma}}^\vee$ is the dual of the fibre at $s$ of the KS morphism. Hence, we have a commutative diagram
\[
    \begin{tikzcd}
        &0 \arrow[r] & T_{\overline{S}_r, s} \arrow[r] \arrow[d] & T_{S, s} \arrow[r] \arrow[d] &\mc{N}_{\overline{S}_r/S, s} \arrow[d]\\
        &0 \arrow[r] & \ul{\omega}_\mu^\vee \otimes \ul{\omega}_{\overline{\sigma}}^\vee \arrow[r] & \ul{\omega}_\sigma^\vee \otimes \ul{\omega}_{\overline{\sigma}}^\vee \arrow[r] &\ul{\omega}_0^\vee \otimes \ul{\omega}_{\overline{\sigma}}^\vee \arrow[r] &0\\
    \end{tikzcd}
\]
where the vertical arrows are isomorphisms and $\mc{N}_{\overline{S}_r/S, s}$ denotes the normal sheaf at $s$ of $\overline{S}_r$ in $S$. This proves at once that $\overline{S}_r$ is smooth and by duality that the diagram of exact sequences \ref{lowerEOKSiso} is exact and its vertical arrows are isomorphisms. 
\par Finally, using the functoriality of $\nabla_r$ from \ref{lemGMpullback} with $\phi = V^{r-1}$ we deduce that $\nabla_r(\ul{\omega}_0) \subseteq \ul{\omega}_0 \otimes \Omega^1_{\overline{S}_r/\mb{F}}$.
\end{proof}

\begin{remark}
The smoothness part of \ref{propmoreonKScharp} generalises point 3 of \cite[Thm.~IV.7]{koblitz}. It should be possible to give an alternative proof using arguments similar to those of \cite[IV.9]{koblitz}, which are quite explicit.
\\ Notice that the smoothness of closed EO strata in the $p$-split case stands in contrast with the $p$-inert case: for instance, when $n=3, 4$, one can show that the 1-dimensional closed strata are (unions of) chains of Fermat curves of degree $p+1$ intersecting at superspecial points, with every superspecial point being at the intersection of many irreducible components. See \cite[Ex.~G]{vollwedh} or \cite{deshagor17} for more details.
\end{remark}

\section{Generalised splittings and operators}

\subsection{Unit-root splitting}
We present here the classic unit-root splitting of the Hodge filtration \ref{hfiltsigma1} over the ordinary locus $S^\mu_K$ and then we show how one can obtain a similar splitting over lower EO strata. In our discussion we will make implicit use of \ref{propEOunit} and of the following technical lemma, inspired by \cite[3.3.1]{EFGMM}.
\begin{lemma}
\label{lemadjext}
Let $\overline{S}$ be a scheme. Consider $\mc{F}, \mc{G}$ and $\mc{H}$ finite locally free sheaves on $\overline{S}$, with $\mc{F} \subseteq \mc{G}$ such that $\mc{G}/\mc{F}$ is also finite locally free with constant rank. Suppose that we have a morphism
\[
    \phi \colon \mc{G} \longrightarrow \mc{H}
\]
of $\mc{O}_{\overline{S}}$-sheaves such that the restriction $\phi|_\mc{F}$ of $\phi$ to $\mc{F}$ is an isomorphism on a dense open $S \subseteq \overline{S}$. Write $d = \det \phi|_\mc{F} \in H^0(\overline{S}, \det \mc{H} \otimes \det \mc{F}^{-1})$ for the determinant of that restriction. Then the morphism
\[
    \mc{G} \overset{\phi}{\longrightarrow} \mc{H} \overset{\phi^{-1}}{\longrightarrow} \mc{F} \overset{d}{\longrightarrow} \mc{F} \otimes \det \mc{H} \otimes \det \mc{F}^{-1}
\]
extends naturally from $S$ to $\overline{S}$. Moreover, consider $\mb{S}^{\ul{k}}(\mc{G})$, for integers $\ul k = (k_1 \geq k_2 \geq \cdots \geq k_r \geq 0)$ and $r$ the rank of $\mc{F}$. Let $F(\mb{S}^{\ul{k}}(\mc{G}))$ the penultimate step of the descending filtration from $\mb{S}^{\ul{k}}(\mc{G})$ to $\mb{S}^{\ul{k}}(\mc{F})$, defined as in \ref{omegatoHfiltration}. Then, the map
\[
    F(\mb{S}^{\ul{k}}(\mc{G})) \overset{\mb{S}^{\ul{k}}(\phi)}{\longrightarrow} \mb{S}^{\ul{k}}(\mc{H}) \overset{\mb{S}^{\ul{k}}(\phi^{-1})}{\longrightarrow} \mb{S}^{\ul{k}}(\mc{F}) \overset{d}{\longrightarrow} \mb{S}^{\ul{k}}(\mc{F}) \otimes \det \mc{H} \otimes \det \mc{F}^{-1}
\]
extends from $S$ to $\overline{S}$.

\end{lemma}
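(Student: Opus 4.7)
The plan is to construct the extension explicitly via the \emph{adjugate} of $\phi|_\mc{F}$. Since $\phi|_\mc{F} \colon \mc{F} \to \mc{H}$ is an isomorphism on a dense open, $\mc{F}$ and $\mc{H}$ are locally free of the same rank $r$. For any morphism $\psi \colon \mc{F} \to \mc{H}$ between finite locally free sheaves of equal rank $r$, combining $\wedge^{r-1}\psi$ with the canonical perfect pairings $\wedge^{r-1}\mc{F} \cong \mc{F}^\vee \otimes \det \mc{F}$ and $\wedge^{r-1}\mc{H} \cong \mc{H}^\vee \otimes \det \mc{H}$ yields a globally defined adjugate morphism
\[
    \psi^{\mr{adj}} \colon \mc{H} \longrightarrow \mc{F} \otimes \det \mc{H} \otimes \det \mc{F}^{-1},
\]
satisfying Cramer's identity $\psi^{\mr{adj}} \circ \psi = (\det \psi) \cdot \mr{id}_\mc{F}$; equivalently, $\psi^{\mr{adj}} = (\det \psi) \cdot \psi^{-1}$ wherever $\psi$ is invertible.

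For the first claim, I would simply define the extension to be $(\phi|_\mc{F})^{\mr{adj}} \circ \phi \colon \mc{G} \to \mc{F} \otimes \det \mc{H} \otimes \det \mc{F}^{-1}$, which is defined on all of $\overline{S}$ since both $\phi$ and the adjugate are globally defined. By Cramer's identity, on the dense open $S$ it agrees with $d \cdot (\phi|_\mc{F})^{-1} \circ \phi$, which is exactly the original composition.

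For the Schur version, the key observation is that on $S$ one has, by functoriality, $\mb{S}^{\ul k}(\phi^{-1}) \circ \mb{S}^{\ul k}(\phi) = \mb{S}^{\ul k}(\pi)$, where $\pi \colon \mc{G} \to \mc{F}$ is the retraction $\pi = (\phi|_\mc{F})^{-1} \circ \phi$ of the inclusion $\mc{F} \subseteq \mc{G}$ determined by $\phi$. From the explicit description of the filtration given in \ref{secfiltsym}, the penultimate step $F(\mb{S}^{\ul k}(\mc{G}))$ is generated locally by (appropriately symmetrised or antisymmetrised) tensors in which at most one tensor factor lies in $\mc{G}$ while all remaining factors lie in $\mc{F}$. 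On the $\mc{F}$-factors $\pi$ acts as the identity, so on such generators $\mb{S}^{\ul k}(\pi)$ acts non-trivially only on the single $\mc{G}$-slot, sending $g \mapsto \pi(g)$. Multiplying by $d$ converts this action on the $\mc{G}$-slot into $(\phi|_\mc{F})^{\mr{adj}} \circ \phi$, which is globally defined on $\overline{S}$. The extension is then defined by this prescription: keep the identity on the $\mc{F}$-factors and apply $(\phi|_\mc{F})^{\mr{adj}} \circ \phi$ to the $\mc{G}$-slot, then reassemble via the natural map $\mb{S}^{\ul k - \ul\star}(\mc{F}) \otimes \mc{F} \otimes \det\mc{H} \otimes \det\mc{F}^{-1} \to \mb{S}^{\ul k}(\mc{F}) \otimes \det\mc{H} \otimes \det\mc{F}^{-1}$.

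The main obstacle is purely a bookkeeping one: verifying that the prescription descends from a presentation of $F(\mb{S}^{\ul k}(\mc{G}))$ as the image of a tensor product of a Schur power of $\mc{F}$ with a copy of $\mc{G}$ to a well-defined map on $F(\mb{S}^{\ul k}(\mc{G}))$ itself, and that it restricts correctly on the sub-object $\mb{S}^{\ul k}(\mc{F}) \subseteq F(\mb{S}^{\ul k}(\mc{G}))$. This is a direct local check using the explicit tensor-product, symmetric-power, and Koszul descriptions of the filtration recalled in \ref{secfiltsym}. Uniqueness of the extension is automatic: $\mb{S}^{\ul k}(\mc{F}) \otimes \det \mc{H} \otimes \det \mc{F}^{-1}$ is locally free, hence torsion-free, and the extension is forced by its values on the dense open $S$.
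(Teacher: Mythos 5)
Your proposal is correct and takes essentially the same approach as the paper: the key mechanism in both is to recognise $d\cdot(\phi|_\mc{F})^{-1}$ as the globally defined adjugate $(\phi|_\mc{F})^{\mr{adj}}$, then observe that on the penultimate step of the filtration only a single tensor slot lies outside $\mc{F}$, so that after multiplying by $d$ the whole composition rewrites as $\mr{id}\otimes\big((\phi|_\mc{F})^{\mr{adj}}\circ\phi\big)$ composed with the multiplication map, all of which is defined over $\overline{S}$. The one point worth noting is that your closing uniqueness remark implicitly uses that $\overline{S}$ is reduced (so that a map into a torsion-free sheaf is determined by its restriction to a dense open); this is harmless here since the lemma does not in fact assert uniqueness, and the extension you construct visibly restricts to the given composition over $S$.
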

\begin{proof}
The key observation, from \cite[3.3.1]{EFGMM}, is that the morphism
\[
    d\cdot \phi^{-1} \colon \mc{H} \longrightarrow \mc{F} \otimes \det \mc{H} \otimes \det \mc{F}^{-1}
\]
is the \emph{adjugate} $\phi|_\mc{F}^{\mr{adj}}$ of the morphism $\phi|_\mc{F}$ and thus it is naturally defined over all of $\overline{S}$. Notice that on $\mc{F}$ the morphism $\phi|_\mc{F}^\mr{adj} \circ \phi$ is simply multiplication by $d$.
\par The statement is trivial when $\ul{k} = \ul 0$. We present the proof in the case $\ul{k} = (k, 0, \dots, 0)$, where $\mb{S}^{\ul{k}}(\mc{G}) = \mr{Sym}^k(\mc{G})$. The general case is proved similarly. One can consider the commutative diagram
\[
    \begin{tikzcd}
    & \mr{Sym}^{k-1}(\mc{F}) \otimes \mc{G} \arrow{r} \arrow{rd} & F^{k-1}(\mr{Sym}^k(\mc{G})) \arrow{d} \\
    & & \mr{Sym}^k(\mc{F}) \otimes \det \mc{H} \otimes \det \mc{F}^{-1},
    \end{tikzcd}
\]
with the vertical arrow being the morphism we want to extend. The diagonal arrow is simply $\mr{id} \otimes (\phi|_\mc{F}^\mr{adj} \circ \phi)$, so it extends from $S$ to $\overline{S}$, and it clearly factors through $F^{k-1}(\mr{Sym}^k(\mc{G}))$, so we conclude.
\end{proof}

\par 
\subsubsection{The ordinary splitting}
On $S = S^\mu_K$ the morphism $F \colon H^{(p)}_\sigma \to H_\sigma$ induces via \ref{hfiltsigma1} an isomorphism
\[
    F \colon \ul{\omega}_{A^\vee/S, \sigma}^{\vee, p} \longrightarrow \ul{\omega}_{A^\vee/S, \sigma}^{\vee}.
\]
Moreover, $F$ kills the subsheaf $\ul{\omega}_{A/S, \sigma}^{(p)} \subset H^{(p)}_\sigma = H^{1, (p)}_\mr{dR}(A/S^\mu_K)$ and the composition
\[
    \ul{\omega}_{A^\vee/S, \sigma}^{\vee} \overset{F^{-1}}{\longrightarrow} \ul{\omega}_{A^\vee/S, \sigma}^{\vee, p} \overset{F}{\longrightarrow} H_\sigma
\]
gives a natural splitting of the Hodge filtration \ref{hfiltsigma1}. As in the proof of \ref{lemtorsiondelta}, we write $\mc{U}_\sigma = \im(F \colon H^{(p)}_\sigma \to H_\sigma)$, the locally free subsheaf giving the splitting. In particular, we have that $\mc{U}_\sigma = \ker (V \colon H_\sigma \to H_\sigma^{(p)})$. Moreover, $V \colon H_\sigma \to H_\sigma^{(p)}$ has image $\ul{\omega}_{A/S, \sigma}^{(p)}$ and, over $S^\mu_K$, it induces the isomorphism $V \colon \ul{\omega}_{A/S, \sigma} \to \ul{\omega}_{A/S, \sigma}^{(p)}$. This implies that the composition
\[
    p_{\mr{ur}, 1} \colon H_\sigma \overset{V}{\longrightarrow} \ul{\omega}_{A/S, \sigma}^{(p)} \overset{V^{-1}}{\longrightarrow} \ul{\omega}_{A/S, \sigma}
\]
is the projection $H_\sigma \to \ul{\omega}_{A/S, \sigma}$ parallel to $\mc{U}_\sigma$, or, in other words, the projection $H_\sigma \to \ul{\omega}_{A/S, \sigma}$ relative to the splitting of \ref{hfiltsigma1} given by $\mc{U}_\sigma$. This is usually called the \emph{unit-root splitting}. We have proved the following.
\begin{lemma}[Ordinary unit-root splitting]
Over the ordinary locus $S^\mu_K$, the Hodge filtration \ref{hfiltsigma1} admits a natural splitting induced by the Frobenius $F \colon H_\sigma^{(p)} \to H_\sigma$. The corresponding projection $p_{\mr{ur}, 1} \colon H_\sigma \to \ul{\omega}_\sigma$ is induced by $V$ as described above.
\end{lemma}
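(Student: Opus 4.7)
The plan is to unpack, in order, the three assertions sketched in the paragraph preceding the statement, all of which are formal consequences of the two Dieudonn\'e-theoretic short exact sequences recalled in the proof of Lemma \ref{lemtorsiondelta}, together with the standard characterisation of the ordinary locus. First I would reproduce the sequences
\begin{align*}
    0 \longrightarrow \ul{\omega}_{A/S,\sigma}^{(p)} \longrightarrow H^{(p)}_\sigma &\overset{F_\sigma}{\longrightarrow} \mc{U}_\sigma \longrightarrow 0,\\
    0 \longrightarrow \mc{U}_\sigma \longrightarrow H_\sigma &\overset{V_\sigma}{\longrightarrow} \ul{\omega}_{A/S,\sigma}^{(p)} \longrightarrow 0,
\end{align*}
valid over all of $S_K$. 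They exhibit $\mc{U}_\sigma = \im F_\sigma$ as a locally free subsheaf of $H_\sigma$ of the correct rank (namely that of $\ul{\omega}_{A^\vee/S,\sigma}^\vee$), and they yield for free both the identification $\mc{U}_\sigma = \ker V_\sigma$ and the equality $\im V_\sigma = \ul{\omega}_{A/S,\sigma}^{(p)}$.

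Next I would invoke the characterisation of $S_K^\mu = S_{K,w_1}$ provided by Proposition \ref{propEOunit} with $r = 1$: in the standard Dieudonn\'e basis furnished there, $\ker V \cap D_\sigma = \mr{Span}_{k}\langle e_1\rangle$ meets the line $\ul{\omega}_{A/S,\sigma}$ trivially, so the restriction $V_\sigma|_{\ul{\omega}_{A/S,\sigma}} \colon \ul{\omega}_{A/S,\sigma} \to \ul{\omega}_{A/S,\sigma}^{(p)}$ is an isomorphism at every geometric point of $S_K^\mu$ and hence, by Nakayama, everywhere on $S_K^\mu$. Consequently $\ul{\omega}_{A/S,\sigma} \cap \mc{U}_\sigma = \ul{\omega}_{A/S,\sigma} \cap \ker V_\sigma = 0$, and a rank count against the Hodge filtration \ref{hfiltsigma1} forces $H_\sigma = \ul{\omega}_{A/S,\sigma} \oplus \mc{U}_\sigma$ over $S_K^\mu$, producing the desired splitting.

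To close, I would observe that $p_{\mr{ur},1} = (V_\sigma|_{\ul{\omega}_{A/S,\sigma}})^{-1} \circ V_\sigma$ restricts to the identity on $\ul{\omega}_{A/S,\sigma}$ by construction and vanishes on $\mc{U}_\sigma = \ker V_\sigma$, so it coincides with the projection onto $\ul{\omega}_{A/S,\sigma}$ parallel to $\mc{U}_\sigma$. There is no genuine obstacle here: every step is a direct manipulation of the two Dieudonn\'e sequences above, and the only substantive input is the classical identification of the ordinary locus as the locus where $V|_{\ul{\omega}_{A/S,\sigma}}$ is an isomorphism, which is already implicit in our setup.
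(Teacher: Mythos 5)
Your proposal is correct and follows essentially the same route as the paper: both rest on the two Dieudonn\'e short exact sequences from the proof of Lemma \ref{lemtorsiondelta}, identify $\mc{U}_\sigma = \im(F_\sigma) = \ker(V_\sigma)$ as the natural complement, and hinge on the characterisation of $S^\mu_K$ as the locus where $V_\sigma|_{\ul{\omega}_\sigma}$ is an isomorphism. The only difference is presentational: the paper first builds the section on the quotient side via $F$ and then brings in $V$ to identify $p_{\mr{ur},1}$, whereas you argue entirely from the $V$-side and make explicit (via Proposition \ref{propEOunit} with $r=1$ and Nakayama) the pointwise verification that the paper leaves implicit.
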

The morphism $p_{\mr{ur}, 1}$ cannot be extended naturally to $S_K = \overline{S}^\mu_K$, because the factor $V^{-1} \colon \ul{\omega}_{\sigma}^{(p)} \to \ul{\omega}_{\sigma}$ does not make sense outside $S^\mu_K$. If we consider instead the map $A_1 \cdot p_{\mr{ur}, 1}$, we see that it can be extended to the whole $S_K$: the linear map
\[
    A_1 \cdot V^{-1} \colon \ul{\omega}_{\sigma}^{(p)} \longrightarrow \ul{\omega}_{\sigma} \otimes \det \ul{\omega}_\sigma^{p-1}
\]
is the {adjugate} to the morphism $V \colon \ul{\omega}_\sigma \to \ul{\omega}_\sigma^{(p)}$, which is defined over $S_K$. In fact, from \ref{lemadjext} we deduce the following. 
\begin{lemma}
\label{lemurprojord}
The morphism $A_1 \cdot p_{\mr{ur}, 1} \colon H_{\sigma} \to \ul{\omega}_{\sigma} \otimes \det \ul{\omega}_\sigma^{p-1}$ extends to $S_K$. Similarly, for $\ul{k} = (k_1, \dots, k_{n-1})$, with $k_1 \geq  k_2 \geq \cdots \geq k_{n-1} \geq 0$, we have
\[
    A_1 \cdot \mb{S}^{\ul{k}}(p_{\mr{ur}, 1})|_{F(\mb{S}^{\ul{k}}(H_\sigma))} \colon F(\mb{S}^{\ul{k}}(H_\sigma)) \longrightarrow \mb{S}^{\ul{k}}(\ul{\omega}_\sigma) \otimes \det \ul{\omega}_\sigma^{p-1}
\]
extends to $S_K$.
\end{lemma}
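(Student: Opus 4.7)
The plan is to specialise Lemma \ref{lemadjext} to the data arising from the $\sigma$-component of the Verschiebung on de Rham cohomology. Set $\overline{S}=S_K$ with dense open $S=S_K^\mu$ (dense by Theorem \ref{thmEO}), $\mc{G}=H_\sigma$, $\mc{F}=\ul{\omega}_{A/S_K,\sigma}$, $\mc{H}=\ul{\omega}_{A/S_K,\sigma}^{(p)}$. The $\sigma$-component of the Hodge filtration \ref{hfiltsigma1} provides $\mc{F}\subseteq\mc{G}$ with locally free rank-one quotient, and the Dieudonn\'e-theoretic short exact sequences used in the proof of Lemma \ref{lemtorsiondelta} show that the Verschiebung restricts to a surjection $V\colon H_\sigma\twoheadrightarrow\ul{\omega}_\sigma^{(p)}$ well-defined on all of $S_K$. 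Take $\phi=V$ in the notation of Lemma \ref{lemadjext}: its restriction $\phi|_\mc{F}=V|_{\ul{\omega}_\sigma}$ is an isomorphism precisely on $S_K^\mu$ by Proposition \ref{propEOunit}, and its determinant $d=\det V|_{\ul{\omega}_\sigma}$ coincides with the partial Hasse invariant $A_1\in H^0(S_K,\det\ul{\omega}_\sigma^{p-1})$ of \ref{parHinv}.

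For the first statement, the first part of Lemma \ref{lemadjext} extends the composition
\[
    H_\sigma\overset{V}{\longrightarrow}\ul{\omega}_\sigma^{(p)}\overset{V|_{\ul{\omega}_\sigma}^{-1}}{\longrightarrow}\ul{\omega}_\sigma\overset{A_1}{\longrightarrow}\ul{\omega}_\sigma\otimes\det\ul{\omega}_\sigma^{p-1}
\]
from $S_K^\mu$ to $S_K$, realised explicitly as $V|_{\ul{\omega}_\sigma}^{\mr{adj}}\circ V$, where $V|_{\ul{\omega}_\sigma}^{\mr{adj}}\colon\ul{\omega}_\sigma^{(p)}\to\ul{\omega}_\sigma\otimes\det\ul{\omega}_\sigma^{p-1}$ is the adjugate of $V|_{\ul{\omega}_\sigma}$. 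By the defining formula $p_{\mr{ur},1}=V|_{\ul{\omega}_\sigma}^{-1}\circ V$ recalled just above the statement, this composition agrees with $A_1\cdot p_{\mr{ur},1}$ on $S_K^\mu$, giving the first claim.

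For the Schur-functor statement, I apply the second part of Lemma \ref{lemadjext} to the same data $(\mc{G},\mc{F},\mc{H},\phi,d)$ together with the dominant weight $\ul{k}$. By functoriality of the Schur construction $\mb{S}^{\ul{k}}(-)$, the morphism $\mb{S}^{\ul{k}}(p_{\mr{ur},1})$ factors on $S_K^\mu$ as $\mb{S}^{\ul{k}}(V|_{\ul{\omega}_\sigma})^{-1}\circ\mb{S}^{\ul{k}}(V)$, and the second part of Lemma \ref{lemadjext} yields the extension of $A_1\cdot\mb{S}^{\ul{k}}(p_{\mr{ur},1})$ to a morphism $\mb{S}^{\ul{k}}(H_\sigma)\to\mb{S}^{\ul{k}}(\ul{\omega}_\sigma)\otimes\det\ul{\omega}_\sigma^{p-1}$ on all of $S_K$.

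The main conceptual step is the reinterpretation of the unit-root projection $p_{\mr{ur},1}$ as $V|_{\ul{\omega}_\sigma}^{-1}\circ V$, which brings the Verschiebung and the Hasse invariant to the foreground and makes Lemma \ref{lemadjext} directly applicable. The principal work has already been done in Lemma \ref{lemadjext} itself; the present statement is a clean specialisation in which the only bookkeeping required is matching the factor $A_1$ with $\det V|_{\ul{\omega}_\sigma}$ and the adjugate composition with the extension of $p_{\mr{ur},1}$.
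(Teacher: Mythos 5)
Your approach matches the paper's: this lemma is indeed obtained directly from Lemma \ref{lemadjext} with $\mc{G}=H_\sigma$, $\mc{F}=\ul{\omega}_\sigma$, $\mc{H}=\ul{\omega}_\sigma^{(p)}$, $\phi=V$, $d=\det V|_{\ul{\omega}_\sigma}=A_1$, and the reinterpretation $p_{\mr{ur},1}=V|_{\ul{\omega}_\sigma}^{-1}\circ V$ is precisely the conceptual reduction the paper uses.

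One point needs care, though. In the last step you write that the second part of Lemma \ref{lemadjext} ``yields the extension of $A_1\cdot\mb{S}^{\ul{k}}(p_{\mr{ur},1})$ to a morphism $\mb{S}^{\ul{k}}(H_\sigma)\to\mb{S}^{\ul{k}}(\ul{\omega}_\sigma)\otimes\det\ul{\omega}_\sigma^{p-1}$ on all of $S_K$.'' That is \emph{not} what Lemma \ref{lemadjext} says: the extension it produces has source $F(\mb{S}^{\ul{k}}(H_\sigma))$, the penultimate step of the natural filtration from $\mb{S}^{\ul{k}}(H_\sigma)$ down to $\mb{S}^{\ul{k}}(\ul{\omega}_\sigma)$, not the full Schur sheaf. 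And the stronger claim is genuinely false for $\lvert\ul{k}\rvert\geq 2$: locally, the entries of $\mb{S}^{\ul{k}}(V|_{\ul{\omega}_\sigma})^{-1}$ carry higher powers of $\det(V|_{\ul{\omega}_\sigma})^{-1}$, and a single factor of $A_1$ only clears one such power. (For $\ul{k}=(1,0,\dots,0)$, the penultimate step is all of $H_\sigma$, so your first claim is fine.) The statement of the lemma in the paper is admittedly worded in the same loose way, and what it means — and what is used downstream in the construction of $\theta_1$ — is the extension on $F(\mb{S}^{\ul{k}}(H_\sigma))$; the Gauss--Manin connection lands in precisely that subsheaf by Koszul transversality, which is what makes this restricted extension sufficient. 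You should either restrict the source to $F(\mb{S}^{\ul{k}}(H_\sigma))$ explicitly, or note that this is the intended (and only valid) reading.
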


\subsubsection{Partial splittings}
\label{secaosplit}
Fix some $2 \leq r \leq n-1$. On $S=S_{K, w_r}$, the map $F \colon \ul{\omega}_{A^\vee/S, \sigma}^{\vee, p} \longrightarrow \ul{\omega}_{A^\vee/S, \sigma}^{\vee}$ is zero. Nevertheless, we can still look at the surjection $V \colon H_\sigma \to \ul{\omega}_\sigma^{(p)}$. On $S$, the map $V \colon \ul{\omega}_\sigma \to \ul{\omega}_\sigma^{(p)}$ is not surjective, but it induces an isomorphism between the multiplicative part $\ul{\omega}_\mu$ and its $p$-twist. In particular, we can consider the composition
\[
    p_{\mr{ur}, r} \colon H_{\mu, r} \overset{V}{\longrightarrow} \ul{\omega}_\mu^{(p)}  \overset{V^{-1}}{\longrightarrow} \ul{\omega}_{\mu, r}.
\]
This composition is the right inverse of the inclusion $\ul{\omega}_{\mu, r} \to H_{\mu, r}$. That is, the kernel of $p_{\mr{ur}, r}$ gives a splitting of the ses
\[
    0 \longrightarrow \ul{\omega}_{\mu, r} \longrightarrow H_{\mu, r} \longrightarrow \ul{\omega}_{A^\vee/S, \sigma}^\vee \longrightarrow 0.
\]
We call this the \emph{(generalised) partial unit-root splitting}. Recall that we write $A_{r} = \det(V \colon \ul{\omega}_\mu \to \ul{\omega}_\mu^{(p)})$. As in the ordinary case, we can prove the following using Lemma \ref{lemadjext}.
\begin{lemma}
\label{almostordpur}
The morphism $A_{r} \cdot p_{\mr{ur}, r} \colon H_{\mu, r} \longrightarrow \ul{\omega}_{\mu, r} \otimes \det  \ul{\omega}_{\mu, r}^{p-1}$ extends to $\overline{S}_{K, w_r}$. Similarly, for $\ul{k} = (k_1, k_2, \dots, k_{n-r})$, with $k_1 \geq  k_2 \geq \cdots \geq k_{n-r} \geq 0$, we have
\[
    A_{r} \cdot \mb{S}^{\ul{k}}(p_{\mr{ur}, r})|_{F(\mb{S}^{\ul{k}}(H_{\mu, r}))} \colon F(\mb{S}^{\ul{k}}(H_{\mu, r})) \longrightarrow \mb{S}^{\ul{k}}(\ul{\omega}_{\mu, r}) \otimes \det \ul{\omega}_{\mu, r}^{p-1}
\]
extends to $\overline{S}_{K, w_r}$. 
\end{lemma}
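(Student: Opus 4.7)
The plan is to apply Lemma \ref{lemadjext} directly, with $\overline{S} = \overline{S}_{K, w_r}$, $S = S_{K, w_r}$ (dense open by the closure relation in Theorem \ref{thmEO}), $\mc{F} = \ul{\omega}_{\mu, r}$, $\mc{G} = H_{\mu, r}$, $\mc{H} = \ul{\omega}_{\mu, r}^{(p)}$, and $\phi$ the morphism induced by the Verschiebung. Under these choices the determinant $\det \phi|_{\ul{\omega}_{\mu, r}}$ is by construction the partial Hasse invariant $A_r$, and the twist factor $\det \mc{H} \otimes \det \mc{F}^{-1}$ works out to $\det \ul{\omega}_{\mu, r}^{p-1}$, matching the statement.

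The first preliminary step is to check that $V \colon H_\sigma \twoheadrightarrow \ul{\omega}_\sigma^{(p)}$ descends to a well-defined morphism $\phi \colon H_{\mu, r} \to \ul{\omega}_{\mu, r}^{(p)}$. This amounts to verifying that $V(\ul{\omega}_{0, r}) \subseteq \ul{\omega}_{0, r}^{(p)}$ inside $\ul{\omega}_\sigma^{(p)}$. Since $\ul{\omega}_{0, r} = \ker(V^{r-1}\colon \ul{\omega}_\sigma \to \ul{\omega}_\sigma^{(p^{r-1})})$, a local section $x$ of $\ul{\omega}_{0, r}$ satisfies $V^{r-2}(V(x)) = 0$, so $V(x)$ lies in $\ker(V^{r-2}\colon \ul{\omega}_\sigma^{(p)} \to \ul{\omega}_\sigma^{(p^{r-1})}) = \ul{\omega}_{0, r-1}^{(p)} \subseteq \ul{\omega}_{0, r}^{(p)}$, as required.

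The second preliminary step is to verify that $\phi|_{\ul{\omega}_{\mu, r}}$ is an isomorphism on the dense open $S_{K, w_r}$. By construction this restriction coincides with the quotient morphism $V \colon \ul{\omega}_{\mu, r} \to \ul{\omega}_{\mu, r}^{(p)}$ already discussed in \S\ref{secaosplit}, and its determinant is exactly $A_r$, which is nowhere vanishing on $S_{K, w_r}$. Pointwise, this can be read off directly from the explicit standard basis of Proposition \ref{propEOunit}, where $V$ acts as the identity on the basis vectors $e_{r+1}, \dots, e_n$ spanning the multiplicative part. The ranks of the sheaves involved ($n-r+1$, $n-r$, and $1$ for $\mc{G}$, $\mc{F}$, and $\mc{G}/\mc{F}$, respectively) are constant, so all hypotheses of Lemma \ref{lemadjext} are in place.

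The first assertion then follows immediately from the first conclusion of Lemma \ref{lemadjext}: the composition
\[
    H_{\mu, r} \overset{\phi}{\longrightarrow} \ul{\omega}_{\mu, r}^{(p)} \overset{\phi^{-1}}{\longrightarrow} \ul{\omega}_{\mu, r} \overset{A_r}{\longrightarrow} \ul{\omega}_{\mu, r} \otimes \det \ul{\omega}_{\mu, r}^{p-1},
\]
which on $S_{K, w_r}$ agrees with $A_r \cdot p_{\mr{ur}, r}$ by the very definition of $p_{\mr{ur}, r}$, extends naturally to $\overline{S}_{K, w_r}$. The second, Schur-type, assertion is the corresponding conclusion of Lemma \ref{lemadjext} applied to the same data, after noting that $\mb{S}^{\ul{k}}(p_{\mr{ur}, r})$ is built functorially from $\phi$ and $\phi|_{\ul{\omega}_{\mu, r}}$ via the twist construction of \S\ref{thetwist}, so the filtered piece $F(\mb{S}^{\ul{k}}(H_{\mu, r}))$ appearing in \ref{lemadjext} is exactly the source we want. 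The only non-cosmetic point is the descent of $V$ to the $\mu$-quotients, handled in the first step; everything else is a translation into the framework of the adjugate-extension lemma.
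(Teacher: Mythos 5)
Your proposal is correct and is essentially the paper's own argument, which consists of a one-line reference to Lemma \ref{lemadjext} ``as in the ordinary case''; you simply spell out the instantiation ($\mc{F} = \ul{\omega}_{\mu,r}$, $\mc{G} = H_{\mu,r}$, $\mc{H} = \ul{\omega}_{\mu,r}^{(p)}$, $\phi$ induced by $V$, $d = A_r$) and the preliminary check that $V$ descends to the $\mu$-quotients via $V(\ul{\omega}_{0,r}) \subseteq \ul{\omega}_{0,r}^{(p)}$, which the paper leaves implicit. One remark worth making explicit: for general $\ul{k}$, Lemma \ref{lemadjext} only gives the extension on the penultimate filtration step $F(\mb{S}^{\ul{k}}(H_{\mu,r}))$, not on all of $\mb{S}^{\ul{k}}(H_{\mu,r})$ as the lemma's statement loosely reads; you correctly flag this, and it is consistent with the actual use in the construction of $\theta_r$, where Griffiths/Koszul transversality (Lemma \ref{koszultransv}) guarantees that $\nabla_r$ lands precisely in that filtered piece.
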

In the process of constructing the generalised theta operators we will give a mild generalisation of \ref{almostordpur}, which again follows from Lemma \ref{lemadjext}.
\subsection{Generalised theta operators}
\label{genthtssc}
We prove here the main result of this paper, Theorem \ref{bigthm}. Let us start by recalling some notations. Pick $1 \leq r \leq n-1$ an integer and $(\ul{k}, w)$ an automorphic weight with $k_{n-1}\geq 0$. On $\overline{S}_{K, w_r}$ we have the short exact sequence
\[
    0 \longrightarrow \ul{\omega}_{0, r} \longrightarrow \ul{\omega}_{\sigma} \longrightarrow \ul{\omega}_{\mu, r} \longrightarrow 0,
\]
where $\ul{\omega}_{0, r} = \ker (V^{r-1} \colon \ul{\omega}_\sigma \to \ul{\omega}_\sigma^{(p^{r-1})})$ is a locally free sheaf of rank $r-1$. This short exact sequence induces a filtration $F^{\bullet, r}$ on $\ul{\omega}^{\ul{k}, w}$ over $\overline{S}_{K, w_r}$, which is trivial when $r=1$ (see \ref{secgenonfilt} for more details on the definition of $F^{\bullet, r}$). The filtration $F^{\bullet, r}$ gives rise to the graded sheaf $\mr{gr}^{\bullet, r}(\ul{\omega}^{\ul{k}, w})$ over $\overline{S}_{K, w_r}$. Notice that the graded sheaf $\mr{gr}^{\bullet, r}(\ul \omega^{\ul k, w})$ decomposes as a direct sum of sheaves
\[
    \mr{gr}^{\ul a, r}(\ul \omega^{\ul k, w}) \coloneqq\, \sum_{\ul b} \im( \otimes_j \wedge^{l_j-b_j}(\ul \omega_0) \otimes \wedge^{b_j}(\ul \omega_\mu) \to \otimes_i \mr{Sym}^{k_i-a_i}(\ul \omega_0) \otimes \mr{Sym}^{a_i}(\ul \omega_\mu))
\]
where $\ul a, \ul b \in \Z^{n-1}_{\geq 0}$ are such that $a_i \leq k_i, b_j \leq l_j$ for all $i, j$ and $\lvert \ul a \rvert = \lvert \ul b \rvert$. In particular, given $f \in H^0(\overline{S}_{K, w_r}, \mathrm{gr}^{\bullet, r}(\ul \omega^{\ul k, w}))$, we can write it uniquely as $f = \sum_{\ul a} f_{\ul a}$, where $f_{\ul a} \in H^0(\overline{S}_{K, w_r}, \mr{gr}^{\ul a, r}(\ul \omega^{\ul k, w}))$.
\begin{theorem}
\label{bigthm}
Let $1 \leq r < n$ be an integer and $(\ul{k}, w)$ an automorphic weight with $k_{n-1}\geq 0$. There exists a differential operator
\[
    \theta_r \colon \mr{gr}^{\bullet, r}(\ul{\omega}^{\ul{k}, w}) \longrightarrow \mr{gr}^{\bullet, r}(\ul{\omega}^{\ul{k}+\ul{\Delta}_r, w}),
\]
defined on the (closure of the) Ekedahl-Oort stratum $\overline{S}_{K, w_r}$, with
\[
    \ul{\Delta}_r = (p+1, p, \cdots, p, 1, \cdots, 1)
\]
where exactly the last $r-1$ entries are $1$. The operator $\theta_r$ satisfies the following properties:
\begin{enumerate}
    \item The operator $\theta_r$ is $A_{r}$-linear, that is, $\theta_r(A_{r}) = 0$, where $A_{r}$ is the partial Hasse invariant defined above.
    \item The operator $\theta_r$ is Hecke-equivariant.
    \item \label{wtfiltration} Let $f \in H^0(\overline{S}_{K, w_r}, \mathrm{gr}^{\bullet, r}(\ul \omega^{\ul k, w}))$ and write it as $f = \sum_{\ul a} f_{\ul a}$. Then $\theta_r(f)$ is divisible by the Hasse invariant $A_r$ if and only if for each component $f_{\ul a}$ either $A_r \mid f_{\ul a}$ or $p \mid \lvert \ul a \rvert$. 
\end{enumerate}
\end{theorem}
We prove this theorem throughout this section.

\begin{remark}
In what follows we will consider several sheaves, as well as morphisms between them, which are naturally defined over $S_K$ (or even $\mc{S}_K$), but will be restricted to subschemes $S \subseteq S_K$. To ease notations, for a sheaf $\mc{F}$ on $S_K$ we will not in general write $\mc{F}|_{S}$ and similarly for a morphism $\phi \colon \mc{F} \to \mc{G}$ we will not write $\phi|_{S} \colon \mc{F}|_{S} \to \mc{G}|_{S}$, instead we will simply keep writing $\mc{F}$ and $\phi \colon \mc{F} \to \mc{G}$, with the understanding that everything is restricted to a subscheme that will be clear from context.
\end{remark}

\subsubsection{Ordinary operator}
Let $(\ul{k}, w)$ be an automorphic weight such that $k_{n-1} \geq 0$, with the
corresponding automorphic sheaf $\ul{\omega}_{A/S_K}^{\ul{k},w}$, which we denote simply by $\ul{\omega}^{\ul{k},w}$. Consider $H = H^1_\mr{dR}(A/S_K)$ and write
\[
    H^{\ul{k}, w} \coloneqq \mb{S}^\ul{k}(H_\sigma) \otimes_{\mc{O}_S} \delta_{\sigma}^w.
\]
The GM connection induces a morphism
\[
    \ul{\omega}^{\ul{k}, w} \overset{\nabla}{\longrightarrow} H^{\ul{k}, w} \otimes \Omega^1_{S_K/\mb{F}}.
\]
By \ref{koszultransv}, the image of this map lies in the penultimate step of the natural filtration $F^\bullet$ of $H^{\ul{k}, w}$ described in \ref{omegatoHfiltration}.
In particular, from \ref{lemurprojord} we deduce the following.
\begin{lemma}[{\cite[3.3.2]{EFGMM}}]
\label{lemexturtheta}
Consider the composition
\[
    \psi \colon \ul{\omega}^{\ul{k}, w} \overset{\nabla}{\longrightarrow} H^{\ul{k}, w} \otimes \Omega^1_{S/\mb{F}} {\longrightarrow} \ul{\omega}^{\ul{k}, w} \otimes \det \ul{\omega}_\sigma^{p-1} \otimes \Omega^1_{S_K/\mb{F}} \longrightarrow \ul{\omega}^{\ul{k}+\ul{p-1}, w} \otimes \Omega^1_{S_K/\mb{F}},
\]
where the second arrow is $A_1 \cdot (\mb{S}^{\ul{k}}(p_{\mr{ur},1}) \otimes \mr{id}_{\delta_\sigma} \otimes \mr{id}_{\Omega^1})$ and $(\ul{p-1}, 0)=((p-1, \cdots, p-1), 0)$ is the weight of $A_1$. Then $\psi$ extends from $S_K^\mu$ to $S_K$.
\end{lemma}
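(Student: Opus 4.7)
\medskip

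\noindent\textbf{Plan.} The strategy is to combine two ingredients already on the table: Koszul/Griffiths transversality of the Gauss--Manin connection (Lemma \ref{koszultransv}) and the adjugate extension principle of Lemma \ref{lemadjext}, which is precisely what underlies the ordinary unit-root machinery of Lemma \ref{lemurprojord}. The composition $\psi$ is a priori defined only over $S_K^\mu$, since the middle arrow involves the map $A_1 \cdot \mb{S}^{\ul{k}}(p_{\mr{ur},1})$; the goal is to show that when we restrict it to the image of $\nabla$, the resulting morphism actually lives on the entire $S_K$.

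\medskip

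\noindent\textbf{Step 1 (Transversality).} First I would make explicit the filtration $F^\bullet$ on $H^{\ul{k},w}$ described in Remark \ref{omegatoHfiltration}: start from the Koszul filtrations on each $\wedge^j H_\sigma$ ($1 \leq j \leq n-1$) induced by $\wedge^j \ul{\omega}_\sigma \subseteq \wedge^j H_\sigma$, pass to filtrations on the symmetric powers $\mr{Sym}^{k_j-k_{j+1}}(\wedge^j H_\sigma)$ (with $k_n=0$), and then take the tensor product, twisted by the invertible factor $\delta_\sigma^w$. The deepest step is $\ul{\omega}^{\ul{k},w}$, and I write $F(H^{\ul{k},w})$ for the penultimate step, as in the statement of Lemma \ref{lemadjext}. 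Iteratively applying Lemma \ref{koszultransv} together with the Leibniz rule for $\nabla$ on tensor products (and with the fact that $\nabla$ on the invertible $\delta_\sigma^w$ shifts by $0$) shows that
\[
    \nabla(\ul{\omega}^{\ul{k},w}) \subseteq F(H^{\ul{k},w}) \otimes_{\mc{O}_{S_K}} \Omega^1_{S_K/\mb{F}}.
\]

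\medskip

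\noindent\textbf{Step 2 (Adjugate extension).} Next I would invoke Lemma \ref{lemadjext} with $\mc{G} = H_\sigma$, $\mc{F} = \ul{\omega}_\sigma$, $\mc{H} = \ul{\omega}_\sigma^{(p)}$ and $\phi = V \colon H_\sigma \to H_\sigma^{(p)}$, whose image lands in $\ul{\omega}_\sigma^{(p)}$ and whose restriction $V|_{\ul{\omega}_\sigma}$ is an isomorphism exactly on $S_K^\mu$, with determinant $d = A_1$. The lemma yields that
\[
    A_1 \cdot \mb{S}^{\ul{k}}(p_{\mr{ur},1})|_{F(\mb{S}^{\ul{k}}(H_\sigma))} \colon F(\mb{S}^{\ul{k}}(H_\sigma)) \longrightarrow \mb{S}^{\ul{k}}(\ul{\omega}_\sigma) \otimes \det \ul{\omega}_\sigma^{\,p-1}
\]
extends from $S_K^\mu$ to $S_K$. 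Tensoring with the identities on $\delta_\sigma^w$ and on $\Omega^1_{S_K/\mb{F}}$ gives an extension, defined over all of $S_K$, of the middle arrow of $\psi$ restricted to $F(H^{\ul{k},w}) \otimes \Omega^1_{S_K/\mb{F}}$.

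\medskip

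\noindent\textbf{Step 3 (Assembly and conclusion).} By Step 1, $\nabla$ factors through the subsheaf to which Step 2 applies; by Step 2 the middle map extends over $S_K$ there. Finally, the natural multiplication of twisted sheaves from \ref{morphrep} and Section \ref{thetwist} produces a morphism
\[
    \ul{\omega}^{\ul{k},w} \otimes \det \ul{\omega}_\sigma^{\,p-1} \longrightarrow \ul{\omega}^{\ul{k}+\ul{p-1},w}
\]
defined on all of $S_K$, which supplies the last arrow. The composition of these three extended pieces gives the desired extension of $\psi$.

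\medskip

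\noindent\textbf{Main obstacle.} The only nontrivial point is verifying cleanly that the iterated filtration on $H^{\ul{k},w}$ built from Koszul, symmetric, and tensor product filtrations behaves the way Remark \ref{omegatoHfiltration} suggests, so that the transversality statement really puts $\nabla(\ul{\omega}^{\ul{k},w})$ inside the penultimate step identified with the domain of Lemma \ref{lemadjext}. Once this compatibility is nailed down, the rest is bookkeeping.
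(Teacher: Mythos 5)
Your proof is correct and takes essentially the same approach as the paper: the lemma is deduced exactly from the transversality observation (Lemma \ref{koszultransv} applied to the filtration of Remark \ref{omegatoHfiltration}, so that $\nabla$ lands in the penultimate filtration step) together with the adjugate extension principle of Lemma \ref{lemadjext}, which is what underlies Lemma \ref{lemurprojord}. Your Step~2 makes explicit the inputs to Lemma \ref{lemadjext} ($\mc{G}=H_\sigma$, $\mc{F}=\ul{\omega}_\sigma$, $\mc{H}=\ul{\omega}_\sigma^{(p)}$, $\phi=V$, $d=A_1$) which the paper leaves implicit via the citation of \ref{lemurprojord}, and your Steps~1 and~3 mirror the paragraph preceding the lemma and the subsequent assembly of $\theta_1$.
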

We may now define $\theta_1$, the \emph{ordinary theta operator}, as follows. Composing  $\psi$ from \ref{lemexturtheta} with $\mr{id} \otimes \ul{\mr{ks}}^{-1}$, we obtain
\begin{equation}
\label{eqntheta1step}
    \ul{\omega}^{\ul{k}, w} {\longrightarrow} \ul{\omega}^{\ul{k}+\ul{p-1}, w} \otimes \ul{\omega}_\sigma \otimes \det \ul{\omega}_\sigma \otimes \delta_\sigma^{-1}.
\end{equation}
To define $\theta_1$, we further compose \ref{eqntheta1step} with the natural morphism $\ul{\omega}^{\ul k + \ul{p-1}, w} \otimes \ul{\omega}_\sigma \otimes \det \ul{\omega}_\sigma \to \ul{\omega}^{\ul k + {\ul \Delta}_1, w}$ and obtain
\[
    \theta_1 \colon \ul{\omega}^{\ul{k}, w} {\longrightarrow} \ul{\omega}^{\ul{k}+\ul{\Delta}_1, w-1},
\]
where $\ul{\Delta}_1 = (p+1, p, \dots, p)$. This proves the existence part of \ref{bigthm} in the case $r=1$. Note that this theta operator is already defined in \cite{EFGMM}.

\begin{proposition}
\label{propbastheta1}
The operator $\theta_1$ satisfies the following fundamental properties:
\begin{enumerate}
    \item For $f \in H^0(S_K, \ul{\omega}^{\ul{k}, w}), g \in H^0(S_K, \ul{\omega}^{\ul{k}', w'})$ we have
    \[
        \theta_1(fg) = f\theta_1(g) +\theta_1(f)g,
    \]
    that is, the operator $\theta_1$ is a derivation of the algebra of modular forms of level $K$ with coefficients in $\mb{F}$.
    \item The operator $\theta_1$ is $A_1$-linear, that is, $\theta_1(A_1) = 0$.
    \item The operator $\theta_1$ is Hecke-equivariant.
\end{enumerate}
\end{proposition}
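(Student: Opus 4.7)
My plan is to treat each property in turn, relying on the explicit description of $\theta_1$ as the composition
\[
    \ul{\omega}^{\ul{k}, w} \xrightarrow{\nabla} H^{\ul{k}, w} \otimes \Omega^1_{S_K/\mb{F}} \longrightarrow \ul{\omega}^{\ul{k}+\ul{p-1}, w} \otimes \Omega^1_{S_K/\mb{F}} \xrightarrow{\mr{id} \otimes \ul{\mr{ks}}^{-1}} \ul{\omega}^{\ul{k}+\ul{\Delta}_1, w-1}
\]
used in its definition, where the middle arrow is $A_1 \cdot \mb{S}^{\ul{k}}(p_{\mr{ur},1}) \otimes \mr{id}$ followed by the natural symmetric multiplication. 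For the derivation property (1), I would observe that only the first factor $\nabla$ fails to be $\mc{O}_{S_K}$-linear: by definition of a connection it satisfies the Leibniz rule, and the induced connection on any tensor product makes the natural multiplication maps parallel. The remaining factors in the composition are $\mc{O}_{S_K}$-linear, obtained from functorial constructions, and hence respect the multiplicative structure of $\bigoplus_{(\ul{k}, w)} \ul{\omega}^{\ul{k}, w}$. A direct computation on local sections then gives $\theta_1(fg) = f\theta_1(g) + \theta_1(f)g$.

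For the $A_1$-linearity (2), I would use the identification $A_1 \cdot p_{\mr{ur}, 1} = \mr{adj}(V) \circ V$ on $H_\sigma$ recorded in the proof of Lemma \ref{lemurprojord}, which reduces the computation of $\theta_1(A_1)$ to showing that $(V \otimes \mr{id}) \circ \nabla(A_1)=0$ in $\mb{S}^{\ul{p-1}}(H_\sigma^{(p)}) \otimes \Omega^1$. This in turn follows from horizontality of the Verschiebung morphism, Lemma \ref{lemGMpullback}: $V \circ \nabla = \nabla^{(p)} \circ V$. Since $A_1$ is locally given by $\det V$ applied to a basis of $\ul{\omega}_\sigma$, it lies in the image of the Frobenius pullback of sections, which is annihilated by $\nabla^{(p)}$ in view of the characteristic-$p$ identity $\nabla(s^p)=0$. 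Combining these facts, $V(\nabla(A_1))$ vanishes over the dense open $S_K^\mu$; by Lemma \ref{lemexturtheta} and density of $S_K^\mu$ in $S_K$, $\theta_1(A_1)=0$ throughout $S_K$.

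For Hecke-equivariance (3), I would verify that each ingredient in the construction of $\theta_1$ commutes with the cohomological correspondences $(\mc{S}_{K_g}, p_1, p_2)$ defining $T_g$: the Gauss-Manin connection is compatible with finite \'etale base change by Lemma \ref{lemGMbasechange}; the Verschiebung isogeny, and hence $p_{\mr{ur}, 1}$ and $A_1$, arise from the universal abelian scheme and therefore pull back naturally under $p_1$ and $p_2$; the Kodaira-Spencer isomorphism is constructed functorially from the cotangent complex of the universal object; and the sheaves $\ul{\omega}^{\ul{k}, w}$ form a compatible family in the sense of Section \ref{sectamehecke}. Assembling these observations, the operator $\theta_1$ induced on global sections commutes with every $T_g$.

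The main obstacle will be part (2): making precise the sense in which $A_1$ is ``a $p$-power'' so as to apply $\nabla(s^p) = 0$ requires some care, since $\det \ul{\omega}_\sigma^{p-1}$ does not literally factor through a Frobenius pullback. The cleanest approach is probably to argue locally, trivialising $\ul{\omega}_\sigma$ on a small affine open of $S_K^\mu$ and writing $V$ as a matrix, so that the computation reduces to a concrete differential identity in characteristic $p$ that one verifies directly. Parts (1) and (3) are essentially formal and reduce to naturality of the constructions.
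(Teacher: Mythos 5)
Your plan mirrors the paper's proof essentially step for step: (1) is immediate from the Leibniz rule for $\nabla$ and $\mc{O}_{S_K}$-linearity of the remaining factors; (3) is the same naturality argument combining Lemma \ref{lemGMpullback} with the compatibility of $V$, $A_1$, and $\ul{\mr{ks}}$ with the Hecke correspondences. For (2), the paper carries out exactly the local computation you propose to fall back on: on a dense open $U \subseteq S_K^\mu$ with a local basis $e_1, \dots, e_{n-1}$ of $\ul{\omega}_\sigma$ extended to $H_\sigma$, one checks directly that $\mb{S}^{\ul{p-1}}(V)(A_1)|_U = A_1^p$, so that $V\nabla(A_1) = \nabla(V A_1) = \nabla(A_1^p) = 0$ by horizontality of $V$ and vanishing of $\nabla$ on $p$-th powers. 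One slip in your framing to correct: it is $V(A_1)$, not $A_1$ itself, that is a Frobenius pullback (namely $A_1^p$) — as you rightly anticipated, $\det\ul{\omega}_\sigma^{p-1}$ does not factor through Frobenius, and the local identity $V(A_1) = A_1^p$ is precisely what supplies the missing $p$-th power.
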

\begin{proof}
\begin{enumerate}
    \item This is clear from the definition of $\nabla$ and thus of $\theta_1$.
    \item We can choose a small enough dense open $U \subseteq S^\mu_K$ such that $\ul{\omega}_\sigma$ and $H_\sigma$ are both free on $U$ and the Hodge filtration \ref{hfiltsigma1} splits. In particular, we can pick a local basis $e_1, \dots, e_{n-1}$ of $\ul{\omega}_\sigma$ and complete it to a local basis of $H_\sigma$ with the addition of some $e_n$. Then, in these local coordinates, the ordinary Hasse invariant is $A_1|_U = a \cdot (e_1 \wedge \cdots \wedge e_{n-1})^{p-1}$, for some $a \in \mc{O}_{S_K}(U)$. With these notations, from the definition of $A_1$, one can compute that
    \begin{align*}
        (\wedge^{p-1} V)(A_1)|_U &= a (Ve_1 \wedge \cdots \wedge Ve_{n-1})^{p-1} \\
        &= a (a (e_1 \wedge \cdots \wedge e_{n-1})^{p})^{p-1} \\
        &= a^p (e_1 \wedge \cdots \wedge e_{n-1})^{p(p-1)}\\
        &= A_1^p.
    \end{align*}
    Therefore, from the definition of $p_{\mr{ur}, 1} = V|_{\ul{\omega}_\sigma}^{-1} \circ V_{\sigma}$ and \ref{lemGMpullback} we have on $U$ that
    \begin{align*}
        (\wedge^{p-1}p_{\mr{ur}, 1}) \circ \nabla(A_1) &= (\wedge^{p-1}V^{-1})(\wedge^{p-1} V) \nabla(A_1)\\
                &= (\wedge^{p-1} V^{-1}) \nabla((\wedge^{p-1} V) A_1) \\
                & = (\wedge^{p-1} V^{-1}) \nabla(A_1^p) = 0.
    \end{align*}
    Hence, $\theta_1(A_1)|_U = 0$. By density of $U \subset S_K$, we deduce that $\theta_1(A_1) = 0$.
    \item The construction of $\theta_1$ does not depend on the level structure and it is compatible with changes of base involved in the definition of the Hecke operators. Moreover, any prime-to-$p$ quasi-isogeny $f \colon A \to A'$, for $A, A' \in S_K(T)$, $T \to S_K$ finite \'etale, induces an isomorphism $f^\ast \colon \ul{\omega}_{A'/T} \to \ul{\omega}_{A/T}$ such that
    \[
        \begin{tikzcd}
            & \ul{\omega}_{A'/T, \sigma} \otimes_{\mc{O}_S} \det \ul{\omega}_{A'/T, \sigma} \otimes \delta_{A'/T, \sigma}^{-1} \arrow{r}{\ul{\mr{ks}}_{A'/T}} \arrow{d}{f^\ast} & \Omega^1_{T/\mb{F}} \arrow[equal]{d} \\
            & \ul{\omega}_{A/T, \sigma} \otimes_{\mc{O}_S} \det \ul{\omega}_{A/T, \sigma} \otimes \delta_{A/T, \sigma}^{-1} \arrow{r}{\ul{\mr{ks}}_{A/T}} & \Omega^1_{T/\mb{F}}
        \end{tikzcd}
    \]
    is commutative. All of this, combined with \ref{lemGMpullback}, implies that $\theta_1$ is Hecke-equivariant. Notice that this relies on our convention for the definition of $\ul{\mr{ks}}$: had the polarisation been involved, the morphism $\ul{\mr{ks}}$ would have introduced a non-trivial Hecke-action, see \cite[Lem.~4.1.2]{EFGMM}.
    
\end{enumerate}
\end{proof}
\begin{remark}
We can provide an alternative proof of point 2 of \ref{propbastheta1}. From, for instance, the proof of \cite[Prop.~4.1.1]{padicpropMSMF}, we see that we can cover $S^\mu_K$ with maps $U \to S_K^\mu$, finite \'etale of degree $\# \mr{GL}_{n-1}(\mb{F}_p)$, on which we can find a local basis $e_1, \dots, e_{n-1} \in \ul{\omega}_\sigma(U)$, which extends to a basis $e_1, \dots, e_n \in H_\sigma(U)$, such that: 
\begin{align*}
    V(e_i) &= e_i^{(p)} = e_i \otimes 1, \quad i = 1, \dots, n-1,\\
    V(e_n) &= 0.
\end{align*}
This follows from applying the proposition in \emph{loc.\ cit.}\ with $H = \ul{\omega}^\vee_\sigma|_O$, $S_1 = O$, for some non-empty affine open $O\subseteq S^\mu_K$, with notations $H$ and $S_1$ as in the reference, and $F = V^{\vee}$. In that case, we have
\[
    A_1|_U = (e_1 \wedge \cdots \wedge e_{n-1})^{p-1}.
\]
Moreover, by \ref{lemGMpullback} and \ref{lemGMbasechange}, we have that on $U$
\[
    0 = \nabla(e_i^{(p)}) = \nabla(V(e_i)) = V(\nabla(e_i)), \quad i = 1, 2, \dots, n-1,
\]
since $dF_{U, \mr{abs}} = 0$. This implies that $\nabla(e_i) = e_n \otimes \omega_i$, for some sections $\omega_i \in \Omega^1_{S_K/\mb{F}}(U)$ which form a local basis of $\Omega^1_{S_K/\mb{F}}$, by \ref{propksiso}. With this description of the $\nabla(e_i)$'s, one can see that over $U$ we have $\theta_1(A_1)|_U = 0$, thanks to a simple computation, using the definitions of $\nabla, p_{\mr{ur}, 1}$ and thus $\theta_1$. Therefore, since we can cover $S^\mu_K$ with such finite \'etale $U$'s,  $\theta_1(A_1)$ vanishes on the ordinary locus $S^\mu_K$ and by density we have $\theta_1(A_1) = 0$ on all of $S_K$.\\
This shows that $\theta_1$ admits a relatively simple and explicit description \'etale-locally on the ordinary locus. In fact, one can prove that it is enough to take $U = I$, with $I$ being the (small) {Igusa variety} $I \to S^\mu_K$ of level $1$ and achieve the same results working globally. This is related to the approach taken in \cite{deshagor19} to construct theta operators when $p$ is inert. See in particular \cite[2.1.4]{deshagor19}. See also \cite{howe}.\\
One could use this approach more generally on $S_{K, w_r}$, $2 \leq r \leq n-1$, instead of $S_K^\mu$, to obtain \'etale-local bases of $\ul{\omega}_{\mu, r}$ such that $V \colon \ul{\omega}_{\mu, r} \to \ul{\omega}_{\mu, r}^{(p)}$ admits a description relative to them analogous to the one given above. We do not pursue this direction here.
\end{remark}

\subsubsection{Generalised theta operators}
Fix $2 \leq r \leq n-1$. Let $(\ul{k}, w)$ be an automorphic weight such that $k_{n-1} \geq 0$ and consider the sheaf $\ul{\omega}^{\ul{k},w}$. We look in particular at the restriction of $\ul{\omega}^{\ul{k}, w}$ to $\overline{S}_{K, w_r}$, where we have, as above, the short exact sequence
\[
    0 \longrightarrow \ul{\omega}_{0, r} \longrightarrow \ul{\omega}_\sigma \longrightarrow \ul{\omega}_{\mu, r} \longrightarrow 0.
\]
From this sequence, we get a Koszul filtration on $\wedge^j \ul{\omega}_\sigma$, for all $j=1, \dots, n-1$, following the notations of \ref{seckoszulfiltext}. Similarly we can define a Koszul filtration on $\wedge^j H_\sigma$, $j=1, \dots, n-1$, from 
\[
    0 \longrightarrow \ul{\omega}_{0, r} \longrightarrow H_\sigma \longrightarrow H_{\mu, r} \longrightarrow 0.
\]
From \ref{lemGMpullback} and \ref{propmoreonKScharp}, we deduce that the GM connection on $\overline{S}_{K, w_r}$ induces morphisms between the graded pieces of these two Koszul filtrations as follows
\[
\begin{tikzcd}
    & \mr{gr}^i(\wedge^j \ul{\omega}_\sigma) \arrow[r, "\nabla_r"] \arrow[equal]{d} &\mr{gr}^i(\wedge^j H_\sigma) \otimes \Omega^1_{\overline{S}_{K, w_r}/\mb{F}} \arrow[equal]{d}\\
    & \wedge^i \ul{\omega}_{0, r} \otimes \wedge^{j-i} \ul{\omega}_{\mu, r} \arrow[r, "\nabla_r"] &\wedge^i \ul{\omega}_{0, r} \otimes \wedge^{j-i} H_{\mu, r} \otimes \Omega^1_{\overline{S}_{K, w_r}/\mb{F}}
\end{tikzcd}
\]
In fact, we can be more precise. From the inclusion $\ul{\omega}_{\mu, r} \subset H_{\mu, r}$ we obtain a third Koszul filtration on $\wedge^{j-i} H_{\mu, r}$, which we denote $K^\bullet(\wedge^{j-i} H_{\mu, r})$. Applying \ref{koszultransv} to $K^\bullet$, we see that actually
\[
    \nabla_r(\mr{gr}^i(\wedge^j \ul{\omega}_\sigma)) \subseteq \wedge^i \ul{\omega}_{0, r} \otimes K^{j-i-1}(\wedge^{j-i} H_{\mu, r}) \otimes \Omega^1_{\overline{S}_{K, w_r}/\mb{F}} \subseteq \mr{gr}^i(\wedge^j H_\sigma) \otimes \Omega^1_{\overline{S}_{K, w_r}/\mb{F}}.
\]
Let $\psi$ be the following composition
\begin{align*}
    \psi \colon \mr{gr}^i(\wedge^j \ul{\omega}_\sigma) &\overset{\nabla_r}{\longrightarrow} \wedge^i \ul{\omega}_{0, r} \otimes K^{j-i-1}(\wedge^{j-i} H_{\mu, r}) \otimes \Omega^1_{\overline{S}_{K, w_r}/\mb{F}}\\
    &\longrightarrow \mr{gr}^i(\wedge^j \ul{\omega}_\sigma) \otimes \det \ul{\omega}_{\mu, r}^{p-1} \otimes \Omega^1_{\overline{S}_{K, w_r}/\mb{F}},
\end{align*}
considered over $S_{K, w_r}$, where the second map acts as $A_{r} \cdot \wedge^{j-i} p_{\mr{ur}, r}$ on the factor $K^{j-i-1}(\wedge^{j-i} H_{\mu, r})$ and is the identity on the others.
From \ref{almostordpur} we deduce that $\psi$ extends to $\overline{S}_{K, w_r}$. \par This construction can be generalised. From the Koszul filtrations on $\wedge^j \ul{\omega}_\sigma, \wedge^j H_\sigma$ defined by the subsheaf $\ul{\omega}_{0, r}$, we obtain filtrations on 
\[
    \otimes_j \wedge^{l_j}( \ul \omega_\sigma), \otimes_j \wedge^{l_j}(H_\sigma),\quad j=1, 2, \dots, n-1,
\]
as explained in \ref{secfiltsym}. From these, in turn, we obtain filtrations on $\ul{\omega}^{\ul{k}, w}$ and $H^{\ul{k}, w} = \mb{S}^{\ul{k}}(H_\sigma) \otimes \delta_\sigma^w$, the filtration on $\delta_\sigma^w$ being trivial. The morphism $p_{\mr{ur}, r} \colon H_{\mu, r} \to \ul{\omega}_{\mu, r}$ from \ref{secaosplit}, along with $\mr{id}_{\ul{\omega}_{0, r}}, \mr{id}_{\delta_\sigma}$, lends naturally a morphism of sheaves
\[
    \mr{gr}^\bullet(p_{\mr{ur}, r}) \colon \mr{gr}^\bullet(H^{\ul{k}, w}) \longrightarrow \mr{gr}^\bullet(\ul{\omega}^{\ul{k}, w})
\]
on $S_{K, w_r}$. In fact, under the natural identification $\mr{gr}^\bullet(\mb{S}^{\ul k}(H_\sigma)) = \mb{S}^{\ul k}(\mr{gr}^\bullet(H_\sigma)) = \mb{S}^{\ul k}(\ul \omega_{0,r} \oplus H_{\mu, r})$, we have $\mr{gr}^\bullet(p_{\mr{ur}, r}) = \mb{S}^{\ul k}(\mr{id}_{\ul \omega_{0,r}} \oplus p_{\mr{ur}, r}) \otimes \mr{id}_{\delta_\sigma}^w$.
From  \ref{lemadjext} and \ref{almostordpur} we deduce the following, which is analogous to Lemma \ref{lemexturtheta}.
\begin{lemma}
Consider the composition
\[
    \psi \colon \mr{gr}^\bullet(\ul{\omega}^{\ul{k}, w}) \overset{\nabla_r}{\longrightarrow} \mr{gr}^\bullet(H^{\ul{k}, w}) \otimes \Omega^1_{\overline{S}_{K, w_r}/\mb{F}} {\longrightarrow} \mr{gr}^\bullet(\ul{\omega}^{\ul{k}, w}) \otimes \det \ul{\omega}_\mu^{p-1} \otimes \Omega^1_{\overline{S}_{K, w_r}/\mb{F}},
\]
where the second arrow is $A_{r} \cdot \mr{gr}^\bullet(p_{\mr{ur}, r}) \otimes \mr{id}_{\Omega^1}$. Then, $\psi$, which a priori is only defined on $S_{K, w_r}$, extends to $\overline{S}_{K, w_r}$.
\end{lemma}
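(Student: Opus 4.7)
The plan is to reduce the statement to the extension results already established, namely Lemmas \ref{lemadjext} and \ref{almostordpur}, with Koszul transversality (Lemma \ref{koszultransv}) as the glue. First, I would observe that the first arrow of $\psi$ is already globally defined: by Proposition \ref{propmoreonKScharp} the closed stratum $\overline{S}_{K, w_r}$ is smooth and $\nabla_r(\ul{\omega}_{0,r}) \subseteq \ul{\omega}_{0,r} \otimes \Omega^1_{\overline{S}_{K, w_r}/\mb{F}}$. By the Leibniz rule this implies that $\nabla_r$ preserves the filtration $F^{\bullet, r}$ on both $\ul{\omega}^{\ul{k}, w}$ and on $H^{\ul{k}, w}$ (the latter coming from $\ul{\omega}_{0, r} \subseteq H_\sigma$), and so descends to a morphism $\nabla_r \colon \mr{gr}^{\bullet, r}(\ul{\omega}^{\ul{k}, w}) \to \mr{gr}^{\bullet, r}(H^{\ul{k}, w}) \otimes \Omega^1_{\overline{S}_{K, w_r}/\mb{F}}$ on all of $\overline{S}_{K, w_r}$. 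Only the second arrow then requires work.

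For that, I would unpack the target using Lemma \ref{lemgrotgrpdecomp} together with the descriptions in \ref{seckoszulfiltext} and \ref{secfiltsym}. Each summand of $\mr{gr}^{\bullet, r}(H^{\ul{k}, w})$ decomposes as a tensor product of factors of the form $\wedge^i \ul{\omega}_{0, r}$, on which $p_{\mr{ur}, r}$ acts as the identity, and factors of the form $\mr{Sym}^{k'}(\wedge^{j'} H_{\mu, r})$, on which $p_{\mr{ur}, r}$ acts non-trivially, all twisted by $\delta_\sigma^w$. Applying Koszul transversality to the extension $\ul{\omega}_{\mu, r} \subseteq H_{\mu, r}$ and propagating through the wedge and symmetric powers exactly as in Remark \ref{omegatoHfiltration}, I would check that the image of the graded $\nabla_r$ automatically lands, in each $\mr{Sym}^{k'}(\wedge^{j'} H_{\mu, r})$ factor, inside the penultimate step of the symmetric-power filtration of \ref{secfiltsym}, which is precisely the subsheaf $F(\mb{S}^{\ul{k}''}(\mc{G}))$ for which Lemma \ref{lemadjext} guarantees extension.

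At this point Lemma \ref{almostordpur}, whose content is exactly the application of Lemma \ref{lemadjext} to the morphism $V \colon \ul{\omega}_{\mu, r} \to \ul{\omega}_{\mu, r}^{(p)}$, provides an extension of $A_r \cdot \mb{S}^{\ul{k}''}(p_{\mr{ur}, r})$ from $S_{K, w_r}$ to $\overline{S}_{K, w_r}$ on each such penultimate filtration step. Tensoring these extensions with the identity on the $\wedge^i \ul{\omega}_{0, r}$, $\delta_\sigma^w$, and $\Omega^1$ factors, and summing over the direct-sum decomposition given by Lemma \ref{lemgrotgrpdecomp}, would then assemble the full extension of $A_r \cdot \mr{gr}^\bullet(p_{\mr{ur}, r}) \otimes \mr{id}_{\Omega^1}$, and hence of $\psi$. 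The case $r = 1$ recovers Lemma \ref{lemexturtheta}.

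The main obstacle I anticipate is the bookkeeping in the middle step: one has to track the image of $\nabla_r$ through a nested tensor product of symmetric powers of wedge powers and verify that the Leibniz rule never pushes it out of the penultimate filtration step of any individual symmetric factor. This is essentially the verification carried out just before the statement for the single $\mr{Sym}^k(\wedge^j \ul{\omega}_\sigma)$ factor, merely iterated across the tensor decomposition, and no new geometric input beyond Proposition \ref{propmoreonKScharp} should be needed.
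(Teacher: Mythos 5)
Your proposal follows the same route the paper sketches in the paragraph preceding the lemma: reduce to Lemmas \ref{lemadjext} and \ref{almostordpur} via Koszul transversality (\ref{koszultransv}), with Proposition \ref{propmoreonKScharp} supplying both the smoothness of $\overline{S}_{K, w_r}$ and the stability $\nabla_r(\ul{\omega}_{0,r}) \subseteq \ul{\omega}_{0,r} \otimes \Omega^1_{\overline{S}_{K, w_r}/\mb{F}}$ that makes the graded $\nabla_r$ globally defined. One minor correction to your bookkeeping: the summands of $\mr{gr}^{\bullet, r}(H^{\ul{k}, w})$ involve mixed factors of the form $\mr{Sym}^{e_i}(\wedge^i \ul{\omega}_{0,r} \otimes \wedge^{j-i} H_{\mu,r})$, not cleanly separated tensor products of $\wedge^i \ul{\omega}_{0,r}$ and $\mr{Sym}^{k'}(\wedge^{j'} H_{\mu,r})$ as you describe, since for $0 < i < r-1$ the sheaf $\wedge^i \ul{\omega}_{0,r}$ is not a line bundle; but this does not obstruct the argument, as the adjugate construction of Lemma \ref{lemadjext} applies just as well to $\mr{id}_{\wedge^i \ul{\omega}_{0,r}} \otimes \wedge^{j-i} p_{\mr{ur},r}$ with $\mc{F} = \wedge^i \ul{\omega}_{0,r} \otimes \wedge^{j-i} \ul{\omega}_{\mu,r}$ and $\mc{G} = \wedge^i \ul{\omega}_{0,r} \otimes \wedge^{j-i} H_{\mu,r}$.
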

We can further compose $\psi$ with $\mr{id} \otimes \ul{\mr{ks}}_{\mu, r}^{-1}$ defined in \ref{propmoreonKScharp} to get
\begin{align*}
    \mr{gr}^\bullet(\ul{\omega}^{\ul{k}, w}) &\overset{\psi}{\longrightarrow} \mr{gr}^\bullet(\ul{\omega}^{\ul{k}, w}) \otimes \det \ul{\omega}_{\mu, r}^{p-1} \otimes \Omega^1_{\overline{S}_{K, w_r}/\mb{F}}  \\
    & \overset{\mr{id} \otimes \ul{\mr{ks}}^{-1}_{\mu}}{\longrightarrow} \mr{gr}^\bullet(\ul{\omega}^{\ul{k}, w}) \otimes \det \ul{\omega}_{\mu, r}^{p-1} \otimes \ul{\omega}_\mu \otimes \det \ul{\omega}_\sigma \otimes \delta_{\sigma}^{-1}\\
    & \longrightarrow \mr{gr}^\bullet(\ul{\omega}^{\ul{k}+\ul{\Delta}_r, w-1}),
\end{align*}
where the last map is given by the natural multiplication morphisms.
Therefore, we obtain a morphism on $\mr{gr}^\bullet(\ul{\omega}^{\ul{k}, w})$, shifting the weight by $\ul{\Delta}_r =(p+1, p, \cdots, p, 1, \cdots, 1)$. We call this composition
\[
    \theta_r \colon \mr{gr}^\bullet(\ul{\omega}^{\ul{k}, w}) \longrightarrow \mr{gr}^\bullet(\ul{\omega}^{\ul{k}+\ul{\Delta}_r, w-1})
\]
the \emph{generalised (partial) theta operator} relative to the stratum $\overline{S}_{K, w_r}$. This proves the existence part of Theorem \ref{bigthm} in the case $r \geq 2$.

\begin{proposition}[Basic properties of $\theta_r$]
\label{propbastheta2}
The operator $\theta_r$ satisfies the following fundamental properties:
\begin{enumerate}
    \item For $f \in H^0(\overline{S}_{K, w_r}, \mr{gr}^\bullet(\ul{\omega}^{\ul{k}, w})), g \in H^0(\overline{S}_{K, w_r}, \mr{gr}^\bullet(\ul{\omega}^{\ul{k'}, w'}))$ we have
        $\theta_r(fg) = f\theta_r(g) +\theta_r(f)g$.
    \item The operator $\theta_r$ is $A_{r}$-linear, that is, $\theta_r(A_{r}) = 0$.
    \item The operator $\theta_r$ is Hecke-equivariant.
\end{enumerate}
\end{proposition}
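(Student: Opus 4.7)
The plan is to mirror the proof of Proposition \ref{propbastheta1} step by step, replacing the ordinary data $(A_1, p_{\mr{ur},1}, \ul{\mr{ks}}, \nabla)$ with the partial data $(A_r, p_{\mr{ur},r}, \ul{\mr{ks}}_{\mu,r}, \nabla_r)$, and using the smoothness of $\overline{S}_{K, w_r}$ and Griffiths transversality for $\nabla_r$ established in \ref{propmoreonKScharp} and \ref{koszultransv}. Throughout, the fact that the constructions of $\mc{P}_{0,r}, \mc{P}_{\mu,r}, H_{\mu,r}, A_r, p_{\mr{ur},r}$ and $\ul{\mr{ks}}_{\mu,r}$ are purely functorial in the universal abelian scheme with its $\Oe$-structure will be used repeatedly.

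For property (1), I would argue as in the $r=1$ case: $\theta_r$ is assembled from $\nabla_r$ by post-composing with $\mc{O}_{\overline{S}_{K,w_r}}$-linear morphisms (the graded unit-root projection $\mr{gr}^{\bullet,r}(p_{\mr{ur},r})$ multiplied by $A_r$, the inverse Kodaira--Spencer isomorphism $\ul{\mr{ks}}_{\mu,r}^{-1}$, and the symmetric/exterior multiplication maps that implement the tensor structure on $\mr{gr}^{\bullet,r}$). The Leibniz rule for $\theta_r$ reduces to the Leibniz rule for $\nabla_r$, which in turn descends naturally to the graded pieces of a tensor-product filtration by Lemma \ref{lemgrotgrpdecomp}.

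For property (2), the key will be an \'etale-local computation on the dense open $S_{K, w_r} \subseteq \overline{S}_{K, w_r}$, in the spirit of the alternative proof given after Proposition \ref{propbastheta1}. On a small enough affine open $U \subseteq S_{K, w_r}$ one can find a local basis $f_1, \dots, f_{n-r}$ of $\ul{\omega}_{\mu,r}$ that lifts to a basis of $H_{\mu,r}$ by adjunction of a section $f_{n-r+1}$, with $V(f_i) = f_i \otimes 1$ for $i\leq n-r$ and $V(f_{n-r+1})=0$; then $A_r|_U = (f_1 \wedge \cdots \wedge f_{n-r})^{p-1}$. Using the commutation of $\nabla_r$ with $V$ (Lemma \ref{lemGMpullback}), together with $\nabla_r(A_r^p)=pA_r^{p-1}\nabla_r(A_r)=0$ in characteristic $p$, one computes
\[
    (\wedge^{n-r} p_{\mr{ur},r})\nabla_r(A_r) = (\wedge^{n-r} V^{-1})\nabla_r((\wedge^{n-r}V)A_r) = (\wedge^{n-r}V^{-1})\nabla_r(A_r^p) = 0
\]
on $U$, so $\theta_r(A_r)$ vanishes on $U$. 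Since $S_{K,w_r}$ is covered by such opens and is dense in $\overline{S}_{K, w_r}$ (which is reduced by \ref{thmEO} and \ref{propmoreonKScharp}), we conclude $\theta_r(A_r)=0$ globally.

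For property (3), I would check that each piece of the construction is compatible with the Hecke correspondences defined in Section \ref{sectamehecke}. The subsheaves $\ul{\omega}_{0,r}, \ul{\omega}_{\mu,r}, H_{\mu,r}$ and the morphisms $V, \nabla_r, p_{\mr{ur},r}, A_r$ are all built functorially from $A$, its Verschiebung and its de Rham cohomology, and hence pull back compatibly under the two \'etale projections $p_1, p_2 \colon \mc{S}_{K_g} \to \mc{S}_K$ and are preserved by the quasi-isogeny $[g]$; in particular prime-to-$p$ quasi-isogenies preserve the isomorphism class of the $p$-torsion, so the Hecke correspondences restrict to Cartesian diagrams over each closed EO stratum $\overline{S}_{K, w_r}$, which is what is needed for the formalism at the end of Section \ref{sectamehecke} to apply. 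The only step requiring the same care as in the ordinary case is the use of $\ul{\mr{ks}}_{\mu,r}$: but $\ul{\mr{ks}}_{\mu,r}$ is obtained from $\ul{\mr{ks}}$ by passing to the graded quotient along \ref{lowerEOKSiso}, and $\ul{\mr{ks}}$ is Hecke-equivariant because, per our convention, it does not involve the polarisation (see the commutative square at the end of the proof of Proposition \ref{propbastheta1}). The main obstacle I expect is not conceptual but bookkeeping: one must verify that the adjugate extensions used to define $\theta_r$ (Lemma \ref{lemadjext}) are themselves compatible with pullback along $p_1, p_2$ and $[g]$, which ultimately follows from the functoriality of $\det$ and $\mb{S}^{\ul{k}}$ applied to isomorphisms of locally free sheaves.
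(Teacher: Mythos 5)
Your overall plan is correct, and parts (1) and (3) track the paper's argument faithfully: for (1), the Leibniz rule descends to graded pieces because $\theta_r$ is $\nabla_r$ post-composed with $\mc{O}$-linear maps; for (3), the paper likewise reduces to functoriality of the ingredients and, importantly, the Hecke-equivariance of $\ul{\mr{ks}}_{\mu,r}$ coming from the convention that $\ul{\mr{ks}}$ avoids the polarisation. Your extra observation that prime-to-$p$ quasi-isogenies preserve the $p$-torsion, so the Hecke correspondences restrict Cartesianly to each $\overline{S}_{K, w_r}$, makes explicit something the paper uses silently when invoking the end of Section \ref{sectamehecke}.

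For part (2), you take a genuinely different route than the paper, and there is a slip you should fix. You assert the existence of a local basis $f_1, \dots, f_{n-r}$ of $\ul{\omega}_{\mu,r}$ with $V(f_i) = f_i \otimes 1$ on ``a small enough affine open $U \subseteq S_{K, w_r}$.'' Such an $F$-fixed (or rather $V$-fixed) basis cannot be found Zariski-locally; it exists only after passing to a finite \'etale cover (the small Igusa variety construction invoked in the remark following Proposition \ref{propbastheta1}), and the paper explicitly declines to develop that cover for the strata $S_{K, w_r}$ with $r \geq 2$. The paper's own proof of (2) instead works Zariski-locally: it chooses any local basis $e_1, \dots, e_n$ of $H_\sigma$ compatible with the flag $\ul{\omega}_{0,r} \subset \ul{\omega}_\sigma \subset H_\sigma$, writes $A_r|_U = a\, (\overline{e}_r \wedge \cdots \wedge \overline{e}_{n-1})^{p-1}$ for some $a \in \mc{O}(U)$, and then computes $(\det V)^{\otimes (p-1)}(A_r) = A_r^p$ directly from the definition, exactly as in the computation for $A_1$. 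Your final chain
\[
    (\wedge^{n-r} p_{\mr{ur},r})\nabla_r(A_r) = (\wedge^{n-r}V^{-1})\nabla_r\bigl((\wedge^{n-r}V)A_r\bigr) = (\wedge^{n-r}V^{-1})\nabla_r(A_r^p) = 0
\]
is in fact indifferent to whether $a=1$ or not, so it remains valid in the Zariski-local setting; the appeal to an \'etale trivialisation of $V$ is unnecessary and imports an unproven statement. Replace ``affine open with $V(f_i)=f_i\otimes 1$'' by ``Zariski open with an arbitrary compatible basis and $A_r|_U = a(\cdots)^{p-1}$,'' and the argument matches the paper's and is self-contained.
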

\begin{proof}
\begin{enumerate}
    \item This follows from the construction of $\theta_r$ and the properties of $\nabla_r$.
    \item Consider $U \subseteq S_{K, w_r}$ a small enough dense open, so that we can choose a local basis $e_1, \dots, e_n$ of $H_\sigma$ over $U$ such that $e_1, \dots, e_{r-1}$ is a basis of $\ul{\omega}_0$, $e_1, \dots, e_{n-1}$ a basis of $\ul{\omega}_\sigma$ and $e_n$ reduces to a basis of $H_\sigma/\ul{\omega}_\sigma$.
    In particular, we see that
    \[
        A_{r}|_U = a (\overline{e}_r \wedge \cdots \wedge \overline{e}_{n-1})^{p-1}, 
    \]
    for some $a \in \mc{O}_{S_{K, w_r}}(U)$, where $\overline{\cdot}$ denotes the reduction through $H_\sigma \to H_{\mu, r}$. As in \ref{propbastheta1}, one can see that $\theta_r(A_{r})|_U = 0$, so that by density $\theta_r(A_{r}) = 0$.
    \item The same arguments that we used for $\theta_1$ work.
\end{enumerate}
\end{proof}

\begin{remark}
Notice that multiplication by $A_r$, for $1 \leq r \leq n-1$, is also Hecke-equivariant. By this we mean that it induces the commutative diagram
\[
        \begin{tikzcd}
            & H^i(\overline{S}_{K, w_r}, \mr{gr}^{\bullet, r}(\ul{\omega}^{\ul{k}, w})) \arrow{d}{A_r\cdot} \arrow{r}{T_g} &H^i(\overline{S}_{K, w_r}, \mr{gr}^{\bullet, r}(\ul{\omega}^{\ul{k}, w})) \arrow{d}{A_r\cdot} \\
            & H^i(\overline{S}_{K, w_r}, \mr{gr}^{\bullet, r}(\ul{\omega}^{\ul{k}+\ul{w}_r, w})) \arrow{r}{T_g} &H^i(\overline{S}_{K, w_r}, \mr{gr}^{\bullet, r}(\ul{\omega}^{\ul{k}+\ul{w}_r, w})),
        \end{tikzcd}
\]
for any $i \geq 0$, $g \in \mathbf{G}(\mb{A}^{p, \infty})$ and automorphic weight $(\ul{k}, w)$, where $\ul{w}_r = (0, \dots, p-1, \dots, 0)$ is the weight shift produced by $A_r$. This follows, with notations as in \ref{sectamehecke}, from the commutativity of the diagrams
\[
    \begin{tikzcd}
    & \mr{gr}^{\bullet, r}(\ul{\omega}^{\ul{k}, w}) \arrow{r}{p_2^\ast} \arrow{d}{A_r\cdot} & p_{2, \ast} p_2^\ast \mr{gr}^{\bullet, r}(\ul{\omega}^{\ul{k}, w}) \arrow{d}{A_r\cdot} & p_{1, \ast} p_1^\ast \mr{gr}^{\bullet, r}(\ul{\omega}^{\ul{k}, w}) \arrow{r}{\mr{tr}} \arrow{d}{A_r\cdot} & \mr{gr}^{\bullet, r}(\ul{\omega}^{\ul{k}, w}) \arrow{d}{A_r\cdot}\\
    & \mr{gr}^{\bullet, r}(\ul{\omega}^{\ul{k}+\ul{w}_r, w}) \arrow{r}{p_2^\ast} & p_{2, \ast} p_1^\ast \mr{gr}^{\bullet, r}(\ul{\omega}^{\ul{k}+\ul{w}_r, w}),  & p_{1, \ast} p_1^\ast \mr{gr}^{\bullet, r}(\ul{\omega}^{\ul{k}+\ul{w}_r, w}) \arrow{r}{\mr{tr}} & \mr{gr}^{\bullet, r}(\ul{\omega}^{\ul{k}+\ul{w}_r, w})
    \end{tikzcd}
\]
of quasi-coherent sheaves over $S_K$. The commutativity of the former follows from the fact that $A_r$ is invariant under base change. The commutativity of the latter, from the fact that any prime-to-$p$ quasi-isogeny $f \colon A \to A'$, for $A, A' \in \overline{S}_{K, w_r}(T)$, $T \to \overline{S}_{K, w_r}$ finite \'etale, induces an isomorphism $f^\ast \colon \ul{\omega}_{A'/T} \to \ul{\omega}_{A/T}$ that respects the filtration $\ul{\omega}_{0, r} \subseteq \ul{\omega}_{\sigma}$ and sends $A_r(A'/T) \in H^0(T, \det \ul{\omega}_{A'/T, \mu}^{p-1})$ to $A_r(A/T) \in H^0(T, \det \ul{\omega}_{A/T, \mu}^{p-1})$.
\end{remark}

\begin{remark}
Notice that even though $\theta_r$ is not defined on $\ul{\omega}^{\ul{k}, w}$, it is defined on the graded parts of a filtration on such sheaves. Since this filtration is Hecke-equivariant, so is the one it induces on $H^0(\overline{S}^\mr{min}_{K, w_r}, \ul{\omega}^{\ul{k}, w})$ and, therefore, every Hecke eigensystem appearing in $H^0(\overline{S}_{K, w_r}, \ul{\omega}^{\ul{k}, w})$ will also appear in 
\[ H^0(\overline{S}_{K, w_r}, \mr{gr}^{\bullet, r}(\ul{\omega}^{\ul{k}, w})).
\]
Furthermore, thanks to the work of \cite[11]{koskgold}, the Hecke-eigensystems found in $\{H^0(\overline{S}_{K, w_r}, \ul{\omega}^{\ul{k}, w})\}_{\ul{k}, w}$ are the same as those found in $\{H^0(\overline{S}_{K}, \ul{\omega}^{\ul{k}, w})\}_{\ul{k}, w}$.
\end{remark}

\subsubsection{Operators $B_r$}
Here we conclude the proof of Theorem \ref{bigthm}. In particular, we prove point \ref{wtfiltration}. We will achieve this by describing the action of the restriction of $\theta_r$ to $\overline{S}_{K, w_{r+1}}$. Let us consider $V \colon \ul \omega_{\mu, r} \to \ul\omega_{\mu, r}^{(p)}$ over the substratum ${S}_{K, w_{r+1}} \subset \overline{S}_{K, w_{r}}$. From the canonical filtration of $A[\mf{p}_\sigma]$ we deduce, over $\overline{S}_{K, w_{r+1}}$, the existence of a short exact sequence
\begin{equation}
\label{onemorefilt}
    0 \longrightarrow \ul \omega_{\mu, r, 0} \longrightarrow \ul \omega_{\mu, r} \longrightarrow \ul \omega_{\mu, r+1} \longrightarrow 0,
\end{equation}
where $\ul \omega_{\mu, r, 0} = \ker(V \colon \ul \omega_{\mu, r} \to \omega_{\mu, r}^{(p)}) = \ul \omega_{0, r+1} / \ul \omega_{0, r}$ is an invertible sheaf. Notice that $V \colon \ul \omega_{\mu, r+1} \to \ul \omega_{\mu, r+1}^{(p)}$ gives a splitting of the $p$-twist of \ref{onemorefilt} over $S_{K, w_{r+1}}$, since that map is an isomorphism on $S_{K, w_{r+1}}$, so that we get in particular a projection $\pi_{r, 0} \colon \ul \omega_{\mu, r}^{(p)} \to \ul \omega_{\mu, r, 0}^p$. Considering now $V \colon H_{\mu, r} \to \ul \omega_{\mu, r}^{(p)} \subseteq H_{\mu, r}^{(p)}$ over ${S}_{K, w_{r+1}}$, we see that one obtains an induced isomorphism $\pi_{r, 0}V \colon H_{\mu, r} /\ul \omega_{\mu, r} \cong (\det \ul \omega_\sigma)^{-1} \otimes \delta_\sigma \to \ul \omega_{\mu, r, 0}^{p}$ and thus a nowhere vanishing section $B_{r,0} \in H^0({S}_{K, w_{r+1}}, \ul \omega_{\mu, r, 0}^{p} \otimes \det \ul \omega_\sigma \otimes \delta_{\sigma}^{-1})$. One can also consider the morphism
\[
\begin{tikzcd}
    &\ul \omega_{\mu, r, 0}^p \cong \ul \omega_{\mu, r, 0} \otimes \ul \omega_{\mu, r, 0}^{p-1} \arrow[r, hook] & \ul \omega_{\mu, r} \otimes \ul \omega_{\mu, r, 0}^{p-1} \arrow[r, "A_{r+1}\cdot"] & \ul \omega_{\mu, r} \otimes \det(\ul \omega_{\mu, r})^{p-1}
\end{tikzcd}
\]
which, by slight abuse of notation, we denote again $(A_{r+1} \cdot)$. We set $B_r \coloneqq A_{r+1} \cdot B_{r, 0}$, which we identify with a section of $\ul \omega_{\mu, r} \otimes \det(\ul \omega_{\mu, r})^{p-1} \otimes \det \ul \omega_\sigma \otimes \delta_{\sigma}^{-1}$ over $S_{K, w_{r+1}}$. Notice that $B_r$ is nowhere vanishing on $S_{K, w_{r+1}}$. We now extend $B_r$ to a section over the closure $\overline{S}_{K, w_{r+1}}$, which vanishes on the complement of $S_{K, w_{r+1}}$.
\par First, consider the restriction of the partial unit-root projection $A_r \cdot p_{\mr{ur}, r} \colon H_{\mu, r} \to \ul \omega_{\mu, r} \otimes \det(\ul \omega_{\mu, r})^{p-1}$ to $\overline{S}_{K, w_{r+1}}$. Since it coincides with multiplication by $A_r \equiv 0$ when restricted to $\ul \omega_{\mu, r}$, we see that it factors through $H_{\mu, r} / \ul \omega_{\mu, r}$. On that quotient, when further restricted to ${S}_{K, w_{r+1}}$, one can see that $A_r \cdot p_{\mr{ur}, r}$ coincides with multiplication by $B_r$. In fact, one can write $A_r \cdot p_{\mr{ur}, r} \colon H_{\mu, r}/ \ul \omega_{\mu, r} \to \ul \omega_{\mu, r} \otimes \det(\ul \omega_{\mu, r})^{p-1}$ as $B_{r, 0} = \pi_{r, 0}V\colon H_{\mu,r}/\ul\omega_{\mu, r} \to \ul \omega_{\mu, r, 0}^p$ composed with $V|_{\ul \omega_{\mu, r}}^{\mr{adj}}\colon \ul \omega^p_{\mu, r, 0} \hookrightarrow \ul \omega_{\mu, r}^{(p)} \to \ul \omega_{\mu, r} \otimes \det(\ul \omega_{\mu, r})^{p-1}$. The map $V|_{\ul \omega_{\mu, r}}^{\mr{adj}}$ coincides with multiplication by $A_{r+1}$ when restricted to $\ul \omega_0^{p}$. This follows from the commutative diagram
\[
\begin{tikzcd}
    & \ul \omega_{\mu, r}^{(p)} \cong \hom(\overset{n-r-1}{\wedge} \ul \omega_{\mu, r}^{(p)}, \det(\ul \omega_{\mu, r})^{p}) \arrow[r, "(\wedge V)^\ast"] & \hom(\overset{n-r-1}{\wedge} \ul \omega_{\mu, r}, \det(\ul \omega_{\mu, r})^{p}) \cong \ul \omega_{\mu, r} \otimes \det(\omega_{\mu, r})^{p-1}\\
    & \ul \omega_{\mu, r, 0}^{(p)} \cong \hom(\det(\ul \omega_{\mu, r+1})^{p},\det(\ul \omega_{\mu, r})^{p}) \arrow[u, hook] \arrow[r, "A_{r+1} \cdot"] & \hom (\det \ul \omega_{\mu, r+1},\det(\ul \omega_{\mu, r})^{p}) \cong \ul \omega_{\mu, r, 0} \otimes \det(\ul \omega_{\mu, r})^{p-1} \arrow[u, hook],
\end{tikzcd}
\]
where the first row is the definition of $V|_{\ul \omega_{\mu, r}}^{\mr{adj}}$ and in the second row we are using the surjection $\wedge^{n-r-1}\ul \omega_{\mu, r} \to \det(\ul \omega_{\mu, r+1})$ and the isomorphism $\ul \omega_{\mu, r, 0} \cong \det(\ul \omega_{\mu, r+1})^{-1} \otimes \det(\ul \omega_{\mu, r})$ deduced from \ref{onemorefilt}. This, in particular, shows what we have claimed above, that $B_r$ extends to a section defined over $\overline{S}_{K, w_{r+1}}$. This is the section corresponding to the morphism $A_r \cdot p_{\mr{ur}, r} \colon H_{\mu, r}/ \ul \omega_{\mu, r} \to \ul \omega_{\mu, r} \otimes \det(\ul \omega_{\mu, r})^{p-1}$. Moreover, over  deeper strata $S_{K, w_s}, s > r + 1$ (when there are any), we have that the image of $V \colon H_{\mu, r}/\ul \omega_{\mu, r} \to \ul \omega_{\mu, r}^{(p)}$ intersects $\ul \omega_{\mu, r, 0}^{p}$ trivially, by Proposition \ref{propEOunit}. Therefore, $V|_{\ul \omega_{\mu, r}}^{\mr{adj}}$ vanishes on the image of $V \colon H_{\mu, r}/\ul \omega_{\mu, r} \to \ul \omega_{\mu, r}^{(p)}$, as one can see working through the definition of the adjugate morphism again. Let us summarise what we have proved so far.
\begin{lemma}
Over the closed stratum $\overline{S}_{K, w_{r+1}}$ the morphism $A_r \cdot p_{\mr{ur}, r} \colon H_{\mu, r} \to \ul \omega_{\mu, r} \otimes \det(\ul \omega_{\mu, r})^{p-1}$ factors through $H_{\mu, r}/ \ul \omega_{\mu, r} \cong (\det \ul \omega_{\sigma})^{-1} \otimes \delta_{\sigma}$ and thus gives a section
\[
    B_r \in H^0(\overline{S}_{K, w_{r+1}}, \ul \omega_{\mu, r} \otimes \det(\ul \omega_{\mu, r})^{p-1} \otimes \det(\ul \omega_{\sigma}) \otimes \delta_\sigma^{-1})
\]
which is nowhere vanishing over ${S}_{K, w_{r+1}}$ and identically zero on $\overline{S}_{K, w_{r+1}} \setminus {S}_{K, w_{r+1}}$.
\end{lemma}
\par The idea is to compare the action of $\theta_r$ over $\overline{S}_{K, w_{r+1}}$ and that of multiplication by $B_r$. Proposition \ref{propwtfilt} shows that they are essentially the same, up to a scalar depending on the weight and the decomposition of $\mr{gr}^{\bullet, r}(\ul \omega^{\ul k, w})$ in its graded terms. Indeed, one can consider the restriction to $\overline{S}_{K, w_{r+1}}$ of 
\[
\begin{tikzcd}
    & \psi \colon \ul \omega_{\mu, r} \arrow[r, "\nabla_r"] & H_{\mu, r} \otimes \Omega_{\overline{S}_{K, w_r}/\mb{F}}^1 \arrow[rr, "A_r \cdot p_{\mr{ur}, r} \otimes \mr{id}"] &&\ul \omega_{\mu, r} \otimes \det(\ul \omega_{\mu, r})^{p-1} \otimes \Omega_{\overline{S}_{K, w_r}/\mb{F}}^1.
\end{tikzcd}
\]
This is an $\mc{O}_{\overline{S}_{K, w_{r+1}}}$-linear morphism that factors through the restriction to $\overline{S}_{K, w_{r+1}}$ of $\widetilde{\ul{\mr{ks}}}_{\mu, r} \colon \ul \omega_{\mu, r} \to H_{\mu, r} / \ul \omega_{\mu, r} \otimes \Omega^1_{\overline{S}_{K, w_r}/\mb{F}}$. By construction of the Kodaira--Spencer map, we have the following commutative diagram
\[
    \begin{tikzcd}
        & \ul \omega_{\mu, r} \otimes (H_{\mu, r}/ \ul \omega_{\mu, r})^\vee \arrow[r, "{\widetilde{\ul{\mr{ks}}}_{\mu, r} \otimes \mr{id}}"] \arrow[d, "{\ul{\mr{ks}}_{\mu, r}}"] & (H_{\mu, r}/ \ul \omega_{\mu, r}) \otimes (H_{\mu, r}/ \ul \omega_{\mu, r})^\vee \otimes \Omega^1_{\overline{S}_{K, w_{r}}/\mb{F}} \arrow[d] \\
        & \Omega^1_{\overline{S}_{K, w_{r}}/\mb{F}} \arrow[r, equal] & \Omega^1_{\overline{S}_{K, w_{r}}/\mb{F}}.
    \end{tikzcd}
\]
In particular, over $\overline{S}_{K, w_{r+1}}$, the operator $\theta_r = (\mr{id} \otimes \ul{\mr{ks}}_{\mu, r}^{-1}) \psi \colon \ul \omega_{\mu, r} \to \mr{Sym}^2(\ul \omega_{\mu, r}) \otimes \det(\ul \omega_{\mu, r})^{p-1} \otimes \det(\ul \omega_{\sigma}) \otimes \delta_{\sigma}^{-1}$ coincides with multiplication by $B_r$. Recall that the graded sheaf $\mr{gr}^{\bullet, r}(\ul \omega^{\ul k, w})$ decomposes as a direct sum of sheaves $\mr{gr}^{\ul a, r}(\ul \omega^{\ul k, w})$. From the Leibniz rule and the fact that $\theta_r$ is zero on $\ul \omega_{0, r}$ (again, over $\overline{S}_{K, w_{r+1}}$), we deduce that for $s \in H^0(\overline{S}_{K, w_r}, \mr{gr}^{\ul a, r}(\ul \omega^{\ul k, w}))$
\[
    \theta_r(s)|_{\overline{S}_{K, w_{r+1}}} = \lvert \ul a \rvert \cdot B_r \cdot s|_{\overline{S}_{K, w_{r+1}}}.
\]
Now consider the following lemma.
\begin{lemma}
The generalised Hasse invariant $A_r$ vanishes with simple zeros along $\overline{S}_{K, w_{r+1}}$. In particular, $f \in H^0(\overline{S}_{K, w_r}, \mathrm{gr}^{\bullet, r}(\ul \omega^{\ul k, w}))$ vanishes along $\overline{S}_{K, w_{r+1}}$ if and only if there is some $g \in H^0(\overline{S}_{K, w_r}, \mathrm{gr}^{\bullet, r}(\ul \omega^{\ul k - \ul w_r, w}))$
such that $f = A_r \cdot g$.\end{lemma}
\begin{proof}
This follows from Grothendieck--Messing theory. We will use the same notations from the proof of Proposition \ref{propmoreonKScharp}. Let $s \in {S}_{r+1} \subseteq \overline{S}_r$ be a point and take $k = k(s)$. Consider $\ul{A}_v \in \overline{S}_r(k[\epsilon]/(\epsilon^2))$ any lift of $\ul{A}_s$ corresponding to $s$. This will correspond to a linear morphism of $k$-vector spaces $h_{\mu, r} \colon \ul \omega_{\mu, r} \to H/\ul \omega = H_{\mu, r}/\ul \omega_{\mu, r}$. Furthermore, $v \in \overline{S}_{r+1}(k[\epsilon]/(\epsilon^2)) \subseteq \overline{S}_r(k[\epsilon]/(\epsilon^2))$ if and only if $h_{\mu, r}$ is zero on the line $\ul \omega_{\mu, r, 0} \subseteq \ul \omega_{\mu, r}$. We want to compute $A_r(v)$. We can choose a basis $e_1, \dots, e_{n-r+1}$ of $H_{\mu, r}$ such that:
\begin{itemize}
    \item $e_1, \dots, e_{n-r}$ span $\ul \omega_{\mu, r}$,
    \item $V(e_1) = 0$ (so that $e_1$ spans $\ul \omega_{\mu, r, 0}$), $V(e_i) = e_i^{(p)}, 2\leq i \leq n-r$, $V(e_{n-r+1}) = e_1^{(p)}$ and
    \item $h_{\mu, r}(e_i) = a_i e_{n-r+1} + \ul \omega_{\mu, r}$, $a_i \in k$ for $1 \leq i \leq n-r$.
\end{itemize}
Notice that $v \in \overline{S}_{r+1}(k[\epsilon]/(\epsilon^2))$ if and only if $a_1 = 0$. We can lift each $e_i$ to $\tilde{e}_i \coloneqq e_i + \epsilon a_i e_{n-r+1}$ to obtain a basis of $\widetilde{\ul \omega}_{\mu, r}$ with respect to which $A_r(v) = \epsilon a_1 (\tilde{e}_1 \wedge \cdots \wedge \tilde{e}_{n-r})^{p-1}$. This shows that $A_r$ has simple zeroes at any point of $S_{r+1}$.
\end{proof}
In conclusion, we have proved what we wanted. 
\begin{proposition}
\label{propwtfilt}
Let $f \in H^0(\overline{S}_{K, w_r}, \mathrm{gr}^{\bullet, r}(\ul \omega^{\ul k, w}))$ and write
\[
    f = \sum_{\ul a} f_{\ul a}, \quad f_{\ul a} \in H^0(\overline{S}_{K, w_r}, \mr{gr}^{\ul a, r}(\ul \omega^{\ul k, w})).
\]
Then $\theta_r(f)$ is divisible by the Hasse invariant $A_r$ if and only if for each component $f_{\ul a}$ either $A_r \mid f_{\ul a}$ or $p \mid \lvert \ul a \rvert$. 
\end{proposition}


\bibliographystyle{amsplain}
\bibliography{bib.bib}

\providecommand{\bysame}{\leavevmode\hbox to3em{\hrulefill}\thinspace}
\providecommand{\MR}{\relax\ifhmode\unskip\space\fi MR }
\providecommand{\MRhref}[2]{%
  \href{http://www.ams.org/mathscinet-getitem?mr=#1}{#2}
}
\providecommand{\href}[2]{#2}
\begin{thebibliography}{10}

\bibitem{sga1}
M.~Raynaud et~al. A.~Grothendieck, \emph{Rev\^{e}tements \'{e}tales et groupe
  fondamental ({SGA} 1)}, Documents Math\'{e}matiques (Paris) [Mathematical
  Documents (Paris)], vol.~3, Soci\'{e}t\'{e} Math\'{e}matique de France,
  Paris, 2003, S\'{e}minaire de g\'{e}om\'{e}trie alg\'{e}brique du Bois Marie
  1960--61. [Algebraic Geometry Seminar of Bois Marie 1960-61], Directed by A.
  Grothendieck, With two papers by M. Raynaud, Updated and annotated reprint of
  the 1971 original [Lecture Notes in Math., 224, Springer, Berlin; MR0354651
  (50 \#7129)]. \MR{2017446}

\bibitem{schurschur}
Kaan Akin, David~A. Buchsbaum, and Jerzy Weyman, \emph{Schur functors and
  {S}chur complexes}, Adv. in Math. \textbf{44} (1982), no.~3, 207--278.
  \MR{658729}

\bibitem{andreattagoren}
F.~Andreatta and E.~Z. Goren, \emph{Hilbert modular forms: mod {$p$} and
  {$p$}-adic aspects}, Mem. Amer. Math. Soc. \textbf{173} (2005), no.~819,
  vi+100. \MR{2110225}

\bibitem{bellaiche}
J.~Bella{\"i}che, \emph{Congruences endoscopiques et repr\'esentations
  galoisiennes}, Ph.D. thesis, Universit\'e de Paris-Sud, Facult\'e des
  sciences d'Orsay, 2002, Th\`ese dirig\'ee par Clozel, Laurent, pp.~1 vol.,
  273 p.

\bibitem{bbm}
P.~Berthelot, L.~Breen, and W.~Messing, \emph{Th\'{e}orie de {D}ieudonn\'{e}
  cristalline. {II}}, Lecture Notes in Mathematics, vol. 930, Springer-Verlag,
  Berlin, 1982. \MR{667344}

\bibitem{bowmandoty}
C.~Bowman, S.~R. Doty, and S.~Martin, \emph{Decomposition of tensor products of
  modular irreducible representations for {${\rm SL}_3$}: the {$p\geq 5$}
  case}, Int. Electron. J. Algebra \textbf{17} (2015), 105--138. \MR{3310689}

\bibitem{boxer}
G.~A. Boxer, \emph{Torsion in the {C}oherent {C}ohomology of {S}himura
  {V}arieties and {G}alois {R}epresentations}, ProQuest LLC, Ann Arbor, MI,
  2015, Thesis (Ph.D.)--Harvard University. \MR{3450451}

\bibitem{deshagor16}
E.~de~Shalit and E.~Z. Goren, \emph{A theta operator on {P}icard modular forms
  modulo an inert prime}, Res. Math. Sci. \textbf{3} (2016), Paper No. 28, 65.
  \MR{3543240}

\bibitem{deshagor17}
\bysame, \emph{Supersingular curves on {P}icard modular surfaces modulo an
  inert prime}, J. Number Theory \textbf{171} (2017), 391--421. \MR{3556691}

\bibitem{deshagor19}
\bysame, \emph{Theta operators on unitary {S}himura varieties}, Algebra Number
  Theory \textbf{13} (2019), no.~8, 1829--1877. \MR{4017536}

\bibitem{deltds}
P.~Deligne, \emph{Travaux de {S}himura}, S\'eminaire Bourbaki : vol. 1970/71,
  expos\'es 382-399, S\'eminaire Bourbaki, no.~13, Springer-Verlag, 1971,
  talk:389. \MR{498581}

\bibitem{delvds}
\bysame, \emph{Vari\'{e}t\'{e}s de {S}himura: interpr\'{e}tation modulaire, et
  techniques de construction de mod\`eles canoniques}, Automorphic forms,
  representations and {$L$}-functions ({P}roc. {S}ympos. {P}ure {M}ath.,
  {O}regon {S}tate {U}niv., {C}orvallis, {O}re., 1977), {P}art 2, Proc. Sympos.
  Pure Math., XXXIII, Amer. Math. Soc., Providence, R.I., 1979, pp.~247--289.
  \MR{546620}

\bibitem{diamondgeoweightshifting2021}
F.~Diamond, \emph{Geometric weight-shifting operators on {H}ilbert modular
  forms in characteristic \texorpdfstring{$p$}{p}}, Journal of the Institute of
  Mathematics of Jussieu (2021), 1–60.

\bibitem{dotyhenke}
S.~Doty and A.~Henke, \emph{Decomposition of tensor products of modular
  irreducibles for {${\rm SL}_2$}}, Q. J. Math. \textbf{56} (2005), no.~2,
  189--207. \MR{2143497}

\bibitem{edixweight}
B.~Edixhoven, \emph{The weight in {S}erre's conjectures on modular forms},
  Invent. Math. \textbf{109} (1992), no.~3, 563--594. \MR{1176206}

\bibitem{EFGMM}
E.~Eischen, M.~Flander, A.~Ghitza, E.~Mantovan, and A.~McAndrew,
  \emph{Differential operators {${\rm mod}\,p$}: analytic continuation and
  consequences}, Algebra Number Theory \textbf{15} (2021), no.~6, 1469--1504.
  \MR{4324831}

\bibitem{eiscmant21}
E.~Eischen and E.~Mantovan, \emph{{$p$}-adic families of automorphic forms in
  the {$\mu$}-ordinary setting}, Amer. J. Math. \textbf{143} (2021), no.~1,
  1--52. \MR{4201778}

\bibitem{eiscmant212}
\bysame, \emph{Entire theta operators at unramified primes}, Int. Math. Res.
  Not. IMRN (2022), no.~21, 16405--16463. \MR{4504899}

\bibitem{fakpil}
N.~Fakhruddin and V.~Pilloni, \emph{Hecke operators and the coherent cohomology
  of {S}himura varieties}, Journal of the Institute of Mathematics of Jussieu
  (2021), 1–69.

\bibitem{faltchai}
G.~Faltings and C.-L. Chai, \emph{Degeneration of abelian varieties},
  Ergebnisse der Mathematik und ihrer Grenzgebiete (3) [Results in Mathematics
  and Related Areas (3)], vol.~22, Springer-Verlag, Berlin, 1990, With an
  appendix by David Mumford. \MR{1083353}

\bibitem{fordsepalg}
T.~J. Ford, \emph{Separable algebras}, Graduate Studies in Mathematics, vol.
  183, American Mathematical Society, Providence, RI, 2017. \MR{3618889}

\bibitem{fultontableaux}
William Fulton, \emph{Young tableaux}, London Mathematical Society Student
  Texts, vol.~35, Cambridge University Press, Cambridge, 1997, With
  applications to representation theory and geometry. \MR{1464693}

\bibitem{koskgold}
W.~Goldring and J.-S. Koskivirta, \emph{Strata {H}asse invariants, {H}ecke
  algebras and {G}alois representations}, Invent. Math. \textbf{217} (2019),
  no.~3, 887--984. \MR{3989256}

\bibitem{grosssatake}
B.~H. Gross, \emph{On the {S}atake isomorphism}, Galois representations in
  arithmetic algebraic geometry ({D}urham, 1996), London Math. Soc. Lecture
  Note Ser., vol. 254, Cambridge Univ. Press, Cambridge, 1998, pp.~223--237.
  \MR{1696481}

\bibitem{gbtmontreal}
A.~Grothendieck, \emph{Groupes de {B}arsotti-{T}ate et cristaux de
  {D}ieudonn\'{e}}, S\'{e}minaire de Math\'{e}matiques Sup\'{e}rieures, No. 45
  (\'{E}t\'{e}, vol. 1970, Les Presses de l'Universit\'{e} de Montr\'{e}al,
  Montreal, Que., 1974. \MR{0417192}

\bibitem{howe}
Sean Howe, \emph{A unipotent circle action on {$p$}-adic modular forms}, Trans.
  Amer. Math. Soc. Ser. B \textbf{7} (2020), 186--226. \MR{4170572}

\bibitem{jantzen}
J.~C. Jantzen, \emph{Representations of algebraic groups}, Pure and Applied
  Mathematics, vol. 131, Academic Press, Inc., Boston, MA, 1987. \MR{899071}

\bibitem{joch}
N.~Jochnowitz, \emph{A study of the local components of the {H}ecke algebra mod
  {$l$}}, Trans. Amer. Math. Soc. \textbf{270} (1982), no.~1, 253--267.
  \MR{642340}

\bibitem{katznilpotent}
N.~M. Katz, \emph{Nilpotent connections and the monodromy theorem:
  {A}pplications of a result of {T}urrittin}, Inst. Hautes \'{E}tudes Sci.
  Publ. Math. (1970), 175--232. \MR{291177}

\bibitem{katzdiff}
\bysame, \emph{Algebraic solutions of differential equations ({$p$}-curvature
  and the {H}odge filtration)}, Invent. Math. \textbf{18} (1972), 1--118.
  \MR{337959}

\bibitem{padicpropMSMF}
\bysame, \emph{{$p$}-adic properties of modular schemes and modular forms},
  Modular functions of one variable, {III} ({P}roc. {I}nternat. {S}ummer
  {S}chool, {U}niv. {A}ntwerp, {A}ntwerp, 1972), 1973, pp.~69--190. Lecture
  Notes in Mathematics, Vol. 350. \MR{0447119}

\bibitem{katzresult}
\bysame, \emph{A result on modular forms in characteristic {$p$}}, Modular
  functions of one variable, {V} ({P}roc. {S}econd {I}nternat. {C}onf., {U}niv.
  {B}onn, {B}onn, 1976), 1977, pp.~53--61. Lecture Notes in Math., Vol. 601.
  \MR{0463169}

\bibitem{katzpadicCMfields}
\bysame, \emph{{$p$}-adic {$L$}-functions for {CM} fields}, Invent. Math.
  \textbf{49} (1978), no.~3, 199--297. \MR{513095}

\bibitem{katzoda}
N.~M. Katz and T.~Oda, \emph{On the differentiation of de {R}ham cohomology
  classes with respect to parameters}, J. Math. Kyoto Univ. \textbf{8} (1968),
  199--213. \MR{237510}

\bibitem{koblitz}
N.~Koblitz, \emph{{$p$}-adic variation of the zeta-function over families of
  varieties defined over finite fields}, Compositio Math. \textbf{31} (1975),
  no.~2, 119--218. \MR{414557}

\bibitem{kott92}
R.~E. Kottwitz, \emph{Points on some {S}himura varieties over finite fields},
  J. Amer. Math. Soc. \textbf{5} (1992), no.~2, 373--444. \MR{1124982}

\bibitem{kwl}
K.-W. Lan, \emph{Arithmetic compactifications of {PEL}-type {S}himura
  varieties}, London Mathematical Society Monographs Series, vol.~36, Princeton
  University Press, Princeton, NJ, 2013. \MR{3186092}

\bibitem{highkoecher}
\bysame, \emph{Higher {K}oecher's principle}, Math. Res. Lett. \textbf{23}
  (2016), no.~1, 163--199. \MR{3512882}

\bibitem{gsas}
B.~Moonen, \emph{Group schemes with additional structures and {W}eyl group
  cosets}, Moduli of abelian varieties ({T}exel {I}sland, 1999), Progr. Math.,
  vol. 195, Birkh\"{a}user, Basel, 2001, pp.~255--298. \MR{1827024}

\bibitem{mumford}
D.~Mumford, \emph{Pathologies of modular algebraic surfaces}, Amer. J. Math.
  \textbf{83} (1961), 339--342. \MR{124328}

\bibitem{odathesis}
T.~Oda, \emph{The first de {R}ham cohomology group and {D}ieudonn\'{e}
  modules}, Ann. Sci. \'{E}cole Norm. Sup. (4) \textbf{2} (1969), 63--135.
  \MR{241435}

\bibitem{skinner}
C.~Skinner, \emph{Galois representations associated with unitary groups over
  {$\mathbb Q$}}, Algebra Number Theory \textbf{6} (2012), no.~8, 1697--1717.
  \MR{3033525}

\bibitem{stacks-project}
The {Stacks project authors}, \emph{The stacks project},
  \url{https://stacks.math.columbia.edu}, 2021.

\bibitem{EOVW}
E.~Viehmann and T.~Wedhorn, \emph{Ekedahl-{O}ort and {N}ewton strata for
  {S}himura varieties of {PEL} type}, Math. Ann. \textbf{356} (2013), no.~4,
  1493--1550. \MR{3072810}

\bibitem{vollwedh}
I.~Vollaard and T.~Wedhorn, \emph{The supersingular locus of the {S}himura
  variety of {${\rm GU}(1,n-1)$} {II}}, Invent. Math. \textbf{184} (2011),
  no.~3, 591--627. \MR{2800696}

\bibitem{weibel}
C.~A. Weibel, \emph{An introduction to homological algebra}, Cambridge Studies
  in Advanced Mathematics, vol.~38, Cambridge University Press, Cambridge,
  1994. \MR{1269324}

\bibitem{wooding}
A.~W. Wooding, \emph{The {E}kedahl-{O}ort {S}tratification of {U}nitary
  {S}himura {V}arieties}, ProQuest LLC, Ann Arbor, MI, 2016, Thesis
  (Ph.D.)--McGill University (Canada). \MR{4257299}

\bibitem{yamauchi2021weight}
T.~Yamauchi, \emph{The weight reduction of mod $p$ siegel modular forms for
  $\mathbf{GSp}_4$}, 2021.

\end{thebibliography}

\end{document}